\theoremstyle{plain}
\newtheorem{mainthm}{Theorem}
\newtheorem{thm}{Theorem}[section]
\newtheorem{lem}[thm]{Lemma}
\newtheorem{prop}[thm]{Proposition}
\newtheorem*{thm*}{Theorem}
\newtheorem{defi}[thm]{Definition}
\newtheorem{rem}[thm]{Remark}
\theoremstyle{definition}
\newtheorem{claim}{Claim}
\newcommand{\eqdef}{\stackrel{\scriptscriptstyle\rm def}{=}}
\setlist[enumerate,1]{label=(\arabic*)}
\setlist[enumerate,2]{label=(\alph*)}
\begin{document}

\title[Berger domains]{Berger domains and Kolmogorov typicality of infinitely many invariant circles}
\author[Barrientos]{Pablo G. Barrientos}
\address{\centerline{Instituto de Matem\'atica e Estat\'istica, UFF}
    \centerline{Rua M\'ario Santos Braga s/n - Campus Valonguinhos, Niter\'oi,  Brazil}}
\email{pgbarrientos@id.uff.br}
\author[Raibekas]{Artem Raibekas~\vspace{0.30cm}}
\address{\centerline{Instituto de Matem\'atica e Estat\'istica, UFF}
   \centerline{Rua M\'ario Santos Braga s/n - Campus Valonguinhos, Niter\'oi,  Brazil}}
\email{artemr@id.uff.br}

\begin{abstract}
Using the novel notion of parablender, P.~Berger proved that the
existence of finitely many attractors is not Kolmogorov typical in
parametric families of diffeomorphisms. Here, motivated by the
concept of Newhouse domains we define Berger domains for families
of diffeomorphisms. As an application, we show that the
coexistence of infinitely many attrac\-ting invariant smooth
circles is Kolmogorov typical in certain non-sectionally
dissipative Berger domains of parametric families in dimension
three or greater.
\end{abstract}

\maketitle \thispagestyle{empty}
\section{Introduction}
Many dynamical properties, such as hyperbolicity, are robust in
$C^r$-topology of diffeomorphisms. That is, the property holds under any appropriate small
perturbation of the dynamical system.  However, many others
interesting phenomena, non-hyperbolic strange attractors for
instance, are not stable in that sense. Hence, the question that
arises is whether such dynamical properties could be survive if not for
all perturbations but, at least, for most. For one-dimensional
dynamics the Malliavin-Shavgulidze measure has been recently
proposed as a good analogy to the Lebesgue measure in order to
quantify this abundance in a probabilistic
sense~\cite{Triestino}. However, in higher dimensions, it is not
known how to introduce a good notion of a measure in the space of
dynamical systems. Kolmogorov in his plenary talk ending the ICM 1954
proposed to consider finite dimensional parametric families taking into
account the Lebesgue measure in the parameter space
(see~\cite{hunt2010prevalence}). A parametric family $(f_a)_a$
exhibits \emph{persistently} a property $\mathscr{P}$ if
it is observed for $f_a$ in a set of parameter values
$a$ with positive Lebesgue measure. Furthermore, the property
$\mathscr{P}$ is called \emph{typical} (in the sense of
Kolmogorov) if there is a Baire (local) generic set of parametric
families exhibiting the property $\mathscr{P}$
persistently with full Lebesgue measure. In this direction, a
milestone in recent history of dynamical systems has been the
paper of Berger~\cite{Ber16} (see also~\cite{Ber17}) where it was
proven that the coexistence of infinitely many periodic sinks is
Kolmogorov typical in parametric families of endomorphisms in
dimension two and diffeomorphisms in higher dimensions.
The work of Berger extends, in a measurable sense according to
Kolmogorov, the important results in the 70's due to
Newhouse~\cite{New74,New79} (see
also~\cite{robinson1983bifurcation,PT93,PV94,GST93a}) on the local
genericity of the coexistence of infinitely many hyperbolic
attractors (sinks) in $C^r$-topology. This celebrated result was coined as
\emph{Newhouse phenomena}. Mimicking this terminology we will refer
to the Kolmorov typical coexistence
of infinitely many attractors as \emph{Berger phenomena}.

Newhouse phenomena has been showen to occur in open sets of
diffeomorphisms having a dense subset of systems displaying
homoclinic tangencies associated with saddle periodic points. Such
an open set of dynamical systems is called a \emph{Newhouse
domain}. In certain cases, these open sets are also the support of
many other interesting phenomena such as the coexistence of
infinitely many attracting invariant circles~\cite{GST08} and
infinitely many strange
attractors~\cite{colli1998infinitely,leal2008high}, or wandering
domains~\cite{KS17} among others. Berger phenomena also occurs
with respect to some open set but now in the topology of
parametric families. Namely, in open sets where the families
having persistent homoclinic tangencies are dense. As before,
mimicking the terminology, we will refer to these open sets of
parametric families as \emph{Berger domains}. In the original
paper of Berger~\cite{Ber16,Ber17}, these open sets were
implicitly constructed for sectional dissipative dynamics. In this
paper, we will introduce formally the notion of a Berger domain
and construct new examples, not necessarily for sectional
dissipative dynamics. As an application, we will prove Berger
phenomena for a certain type of non-sectional dissipative Berger
domains and obtain that the coexistence of infinitely many
attracting invariant circles is also Kolmogorov typical.

\subsection{Degenerate unfoldings}

A $C^r$-diffeomorphism $f$ of a manifold $\mathcal{M}$ has a
\emph{homoclinic tangency} if there is a pair of points ${P}$ and
$Q$, in the same transitive hyperbolic set, so that the unstable
invariant manifold of ${P}$ and the stable invariant manifold of
$Q$ have a non-transverse intersection at a point $Y$. The
tangency is said to be of codimension $c>0$ if
\begin{equation*}
c= c_Y(W^u(P),W^s(Q)) \eqdef \dim M - \dim (T_YW^u(P)+T_YW^s(Q)).
\end{equation*}
This number measures how far from being transverse is the intersection between the invariant
manifolds at $Y$. Since the codimension
of $W^u(P)$ coincides with the dimension of $W^s(Q)$ we have, in
this case, that the codimension $c$ at $Y$ coincides with $\dim T_Y
W^u(P)\cap T_Y W^s(Q)$.
A homoclinic tangency can be unfolded by considering a
$k$-parameter family in the $C^{d,r}$-topology with $1\leq d\leq
r$. That is, a  $C^d$-family $(f_a)_a$ of $C^r$-diffeomorphisms
pa\-ra\-me\-te\-ri\-zed by $a\in \mathbb{I}^k$ with $f_{a_0}=f$
where $\mathbb{I}=[-1,1]$ and $k\geq 1$ (see~\S\ref{sec:topology}
for a more precise definition).
The unfolding of {a
tangency $Y$ of codimension $c$} is said to be
\emph{$C^d$-{degenerate}} at $a=a_0$ if there are points $p_a \in
W^u(P_a)$, $q_a \in W^s(Q_a)$ and $c$-dimensional subspaces $E_a$,
$F_a$ of $T_{p_a}W^u(P_a)$ and $T_{q_a}W^s(Q_a)$ respectively
such that
$$
 d(p_a,q_a)=o(\|a-a_0\|^{d}) \ \ \text{and} \ \
d(E_a,F_a)={o(\|a-a_0\|^{d})} \ \ \text{at $a=a_0$}.
$$
Here $P_a$ and $Q_a$ are the continuations of $P_{a_0}=P$ and
$Q_{a_0}=Q$ for $f_a$. Also $p_{a_0}=q_{a_0}=Y$ and $(p_a,E_a)$,
$(q_a,F_a)$ vary $C^d$-continuously with respect to the parameter
$a\in \mathbb{I}^k$. Observe that in this case it is necessary to
assume that $d<r$ because the above definition involves the
dynamics of the family $(f_a)_a$ in the tangent bundle (in fact,
in certain Grassmannian bundles). In~\cite{Ber16}, the notion of
$C^d$-degenerate unfoldings of homoclinic tangencies were
introduced for short under the name of
\emph{$C^d$-paratangencies}.

\subsection{Berger domains}
\label{BergerDomain} {{
Let us remind the reader the notion of Newhouse domains.
Following~\cite{BD12}, we say that a $C^r$-open set $\mathcal{N}$
of diffeomorphisms is a \emph{$C^r$-Newhouse domain} (of
tangencies of codimension $c>0$) if there exists a dense set
$\mathcal{D}$ in $\mathcal{N}$ such that every $g\in \mathcal{D}$
has a homoclinic tangency (of codimension $c>0$) associated with
some hyperbolic periodic saddle. A $C^r$-Newhouse domain
$\mathcal{N}$ ($r\geq 1$) of homoclinic tangencies (of codimension
one) associated with sectional dissipative periodic points gives
rise to the $C^r$-Newhouse phenomenon. Namely, there exists a
residual subset $\mathcal{R}$ of $\mathcal{N}$ where every $g\in
\mathcal{R}$ has infinitely many hyperbolic
periodic attractors
\cite{New74,New79,PT93,GST93a,PV94,Ro95,GST08}. As Berger showed
in~\cite{Ber16}, open sets of families displaying degenerate
unfoldings play the same role for parametric families as Newhouse
domains do for the case free of parameters.
For this reason
mimicking the above terminology,
one could say that:
\begin{enumerate}[label={}, rightmargin=2em, leftmargin=2em]
\item \it An open set $\mathscr{U}$ of $k$-parameter $C^d$-families of
$C^r$-diffeomorphisms is called a \emph{$C^{d,r}$-Berger domain}
of paratangencies (of codimension $c>0$) if  the following holds.
There exists a dense set $\mathscr{D}\subset \mathbb{I}^k\times
\mathscr{U}$ such that for any $(a_0, f)\in \mathscr{D}$, the
family $f=(f_a)_a$ displays a $C^d$-degenerate unfolding at
$a=a_0$ of a homoclinic tangency (of codimension $c>0$) associated
with a hyperbolic periodic~saddle.
\end{enumerate}
For codimension $c=1$, this definition appears implicitly
in~\cite{Ber16} where
it is proven that the
coexistence of infinitely many hyperbolic periodic attractors is
Kolmogorov typical.
Actually, by modifying the
initial construction Berger showed a stronger result
in~\cite{Ber16,Ber17} that we will refer to as
\emph{$C^{d,r}$-Berger phenomena}: the existence of a residual set
in a $C^{d,r}$-open set of parametric families where each family
has
infinitely many sinks at
\emph{any} parameter.
The
following
stronger version of the above tentative
definition allowed Berger to prove such a result:
\begin{defi} \label{def:Berger-domain}
An open set $\mathscr{U}$ of $k$-parameter $C^d$-families of
$C^r$-diffeomorphisms is called \emph{$C^{d,r}$-Berger domain} of
persistent homoclinic tangencies
(of codimension $c>0$) if there exists a dense subset
$\mathscr{D}$ of $\mathscr{U}$ such that for any $f=(f_a)_a \in
\mathscr{D}$ there is a covering of $\mathbb{I}^k$ by open balls
$J_i$
having the following property:
there is a continuation of a
saddle periodic point $Q_a$ having a homoclinic tangency $Y_{a}$
(of codimension $c>0$) which depends $C^d$-continuously on the
parameter $a\in J_i$.
\end{defi}

Observe that the first tentative definition above requires $d<r$
because of the notion of the $C^d$-paratangency. However,
definition~\ref{def:Berger-domain} admits
$d\leq r$
since it deals with the
notion of a $C^d$-persistent homoclinic tangency.
The following result shows the existence of Berger
domains of large codimension for families of diffeomorphisms:

\begin{mainthm}
\label{thmD} Any manifold of dimension $m> c^2+c$ admits an open
set $\mathscr{U}$ of $k$-parameter $C^d$-families of
$C^r$-diffeomorphisms with {$0<d< r-1$}, so that  $\mathscr{U}$ is
a \emph{$C^{d,r}$-Berger domain} of persistent homoclinic
tangencies of codimension $c>0$.
\end{mainthm}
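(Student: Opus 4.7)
The plan is to build $\mathscr{U}$ as a $C^{d,r}$-neighbourhood of a carefully chosen model family $(f^{0}_{a})_{a}$ supporting an affine horseshoe whose stable/unstable geometry is tuned to codimension $c$ tangencies, and then propagate the tangency persistently in $a\in\mathbb{I}^{k}$ by a Grassmannian version of Berger's parablender. The object to be continued is not merely a point $Y_{a}$ but a pair $(Y_{a},E_{a})$, where $E_{a}\subset T_{Y_{a}}\mathcal{M}$ is the $c$-dimensional intersection $T_{Y_{a}}W^{u}(Q_{a})\cap T_{Y_{a}}W^{s}(Q_{a})$, i.e.\ a section of the Grassmannian bundle $\mathrm{Gr}_{c}(T\mathcal{M})$ over $\mathbb{I}^{k}$.

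First, I would work in a local chart and take $f^{0}_{a_{0}}$ to carry an affine horseshoe $\Lambda$ with a saddle $Q$ whose stable and unstable dimensions are chosen so that generic non-transverse intersections of $W^{u}(Q)$ and $W^{s}(Q)$ have codimension exactly $c$, and plant an initial tangency $Y$ realising this codimension. Secondly, following the blender paradigm, I would build inside $\Lambda$ an iterated function system acting simultaneously on position and on the fibre of $\mathrm{Gr}_{c}(T\mathcal{M})$, whose translated images cover an open box in the $(Y,E)$-variable. The relevant Grassmannian piece is $\mathrm{Gr}(c,2c)$, of dimension $c^{2}$, while the position coordinate of the tangency along $W^{u}(Q)$ contributes an extra $c$ directions. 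The dimensional budget available inside the unstable manifold must therefore exceed $c^{2}+c$, which after accounting for one stable direction transverse to $W^{u}(Q)$ in $W^{s}(Q)$ yields exactly the threshold $m>c^{2}+c$.

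Third, I would upgrade this blender to a parablender in the sense of Berger by replacing the covering of open sets in the $(Y,E)$-fibre with a covering of open sets in the bundle of $d$-jets at $a_{0}$ of $C^{d}$-sections $a\mapsto(Y_{a},E_{a})$. The parametric horseshoe is arranged so that enough independent contractions in position, tangent direction and parameter jet are available; the hypothesis $d<r-1$ is what allows the $C^{r}$-regularity of the diffeomorphism to induce a well-defined $C^{d}$-action on the Grassmannian bundle, and it is used to control the $C^{d}$-jet of the invariant tangent-plane section through iteration. Invoking the parablender machinery of~\cite{Ber16,Ber17}, this produces, for every $g=(g_{a})_{a}$ in a $C^{d,r}$-neighbourhood $\mathscr{U}$ of the model and every $a_{0}\in\mathbb{I}^{k}$, a combinatorial word whose associated branch of $W^{u}(Q_{a})$ is tangent to $W^{s}(Q_{a})$ of codimension $c$ at a point $Y_{a}$ depending $C^{d}$-continuously on $a$ in a ball $J_{i}\ni a_{0}$. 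Compactness of $\mathbb{I}^{k}$ gives a finite covering, producing exactly the data required by Definition~\ref{def:Berger-domain}, and the density of $\mathscr{D}$ in $\mathscr{U}$ follows because any $C^{d,r}$-perturbation of the model admits a $C^{d}$-small adjustment that places an initial tangency at a prescribed parameter.

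The main obstacle, in my view, is the joint covering property of the parametric blender in the Grassmannian bundle: contractions on $\mathrm{Gr}(c,2c)$ do not come as freely as on Euclidean space, and the IFS has to be engineered so that the product of its actions on position, on the $c$-plane $E$ and on the $d$-jet in $a$ has connected, overlapping images. It is precisely this simultaneous covering in $Y$, $E$ and the $C^{d}$-jet that forces the sharp bound $m>c^{2}+c$; the passage from codimension $1$ to arbitrary $c$ here is not a formal corollary of~\cite{Ber16}, and verifying openness and the affine construction of the contractions is expected to be the most delicate step.
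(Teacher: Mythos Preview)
Your proposal conflates two distinct objects: the output of a parablender is a \emph{paratangency} (a $C^{d}$-degenerate unfolding at a single parameter $a_{0}$, i.e.\ agreement of $d$-jets at that point), not a \emph{persistent tangency} (an honest tangency $Y_{a}$ for every $a$ in a ball $J_{i}$). When you write that the parablender machinery ``produces \ldots\ a combinatorial word whose associated branch of $W^{u}(Q_{a})$ is tangent to $W^{s}(Q_{a})$ \ldots\ at a point $Y_{a}$ depending $C^{d}$-continuously on $a$ in a ball $J_{i}\ni a_{0}$'', you are asserting exactly the conclusion that still needs proof. The covering property in the $d$-jet bundle only guarantees that the tangency holds to order $d$ at $a_{0}$; for nearby parameters the intersection typically splits. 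Converting a paratangency into a persistent one is a separate perturbation step and is the heart of the argument you have skipped.

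The paper's route is different in structure. It imports from \cite{BR20} a black-box open set on which every family has a $C^{d}$-degenerate unfolding at \emph{every} parameter (this is where the Grassmannian blender lives and where the bound $m>c^{2}+c$ enters). It then proves two separate propositions: first, a parametrized $\lambda$-lemma is used to push the paratangency from a point of the blender onto the periodic saddle $Q$ itself; second, and crucially, a local surgery with bump functions in \emph{both} parameter and phase space (Proposition~\ref{prop-paratangecia}) exploits the $o(\|a-a_{0}\|^{d})$ control to translate and rotate $W^{u}(Q_{a})$ so that the tangency becomes exact on a cube $a_{0}+(-\alpha,\alpha)^{k}$, at $C^{d,r}$-cost $o(1)$ as $\alpha\to 0$. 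Finally, because these local perturbations overlap in parameter space, one cannot simply tile $\mathbb{I}^{k}$ with a single saddle: the paper uses $3^{k}$ homoclinically related saddles $Q^{j}$ with disjoint phase-space supports so that the surgeries can be performed independently on interlocking lattices, yielding the covering $\{J_{i}\}$ required by Definition~\ref{def:Berger-domain}. Your compactness-of-$\mathbb{I}^{k}$ sentence does not address this interference issue at all.

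A secondary point: your dimensional count via $\mathrm{Gr}(c,2c)$ is not clearly justified. Since the tangency has codimension $c$ and $W^{u}(Q)$ has dimension $c$, one has $E_{a}=T_{Y_{a}}W^{u}(Q_{a})$ entirely, so the Grassmannian variable is really the position of this $c$-plane inside the ambient tangent space subject to lying in $T_{Y_{a}}W^{s}(Q_{a})$; the relevant model space and the origin of $c^{2}+c$ are worked out in \cite{BR20}, not recovered from your heuristic.
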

}

The proof of Theorem~\ref{thmD} is based on the notion of a
$C^d$-degenerate unfolding of homoclinic tangencies and previous
results from~\cite{BR20}. For this reason, we have only been able
to show the existence of $C^{d,r}$-Berger  for families of
diffeomorphisms with $d<r-1$ and manifolds of dimension $m\geq 3$.
Recall that, in the case of codimension $c=1$, Berger, in his
original papers~\cite{Ber16} and~\cite{Ber17}, constructed this
kind of open sets for $C^d$-families of $C^r$-endomorphisms in any
surface with $1\leq d\leq r$. Afterwards this construction is lifted
to $C^d$-families of sectionally dissipative
$C^r$-diffeomorphims in manifolds of dimension $m\geq 3$. It is
unknown if Berger domains exist for families of diffeomorphisms
in dimension $m=2$.

The persistent homoclinic tangencies obtained in the above theorem
can be associated with a finite collection of saddle periodic
points, $Q_{ja}$, having unstable index $c$ and
the same type of multipliers. For instance, we can take these
points to be sectionally dissipative (as in the original
construction of Berger) but also these saddles can be of
type~$(1,1)$, $(2,1)$, $(1,2)$ or $(2,2)$ according to the
nomenclature introduced in~\cite{GST08}.
We remark that in the codimension one case we may assume the homoclinic tangencies are simple\footnote{The tangency is called simple if it is
quadratic, of codimension one and in the case the ambient manifold has dimension
$m>3$, any extended unstable manifold is transverse to the leaf of
the strong stable foliation which passes through the tangency
point. Observe that these conditions are generic.}, also in the sense of ~\cite{GST08}.

%

\subsection{Berger phenomena:}
The $C^{d,r}$-Berger phenomena was shown in~\cite{Ber16,Ber17} for
sectionally dissipative families in dimension
$m\geq 3$. We will obtain similar results for
families that are not sectionally dissipative
by working with a
$C^{d,r}$-Berger domain
$\mathscr{U}$ of type $(2,1)$ with unstable index one. That is,
the persistent homoclinic tangencies are simple and associated with
hyperbolic periodic points having multipliers
$\lambda_1,\dots,\lambda_{m-1}$ and $\gamma$ satisfying
\begin{equation} \label{eq1}
   |\lambda_j|<|\lambda|<1<|\gamma| \ \ \ \text{and \ \ \ $|\lambda^2\gamma|<1<|\lambda\gamma|$} \quad \text{for $j\not= 1,2$ where
   $\lambda_{1,2}=\lambda e^{\pm i \varphi}$ with $\varphi\not=0,\pi$.}
\end{equation}
In the following result we obtain Berger phenomena with respect to
attracting invariant circles and hyperbolic sinks
for these new types of
Berger domains.

\begin{mainthm} \label{thm-thmA} Let $\mathscr{U}$ be a
a $C^{d,r}$-Berger domain whose persistent homoclinic tangencies
are simples and associated with hyperbolic periodic points having multipliers
satisfying~\eqref{eq1}.  Then there exists a residual set
$\mathcal{R}\subset \mathscr{U}$ such that for every family
$f=(f_a)_a \in \mathcal{R}$ and every $a\in \mathbb{I}^k$, the
diffeomorphism $f_a$ has simultaneously
\begin{enumerate}
\item[-] infinitely many normally hyperbolic attracting invariant circles and
\item[-] infinitely many hyperbolic periodic sinks
\end{enumerate}
\end{mainthm}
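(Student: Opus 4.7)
The plan is to reproduce Berger's Baire-category scheme from~\cite{Ber16,Ber17}, replacing the local ``sink creation" mechanism by the invariant-circle mechanism available for saddle-foci of type $(2,1)$ in Gonchenko-Shilnikov-Turaev~\cite{GST08}. For each $N\in\mathbb{N}$, let $\mathcal{R}_N\subset\mathscr{U}$ denote the set of families $f=(f_a)_a$ such that for every $a\in\mathbb{I}^k$ the diffeomorphism $f_a$ carries at least $N$ normally hyperbolic attracting invariant circles together with at least $N$ hyperbolic periodic sinks. The desired residual set is $\mathcal{R}=\bigcap_{N\geq 1}\mathcal{R}_N$, so it suffices to prove that every $\mathcal{R}_N$ is open and dense in $\mathscr{U}$.

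Openness will follow from the $C^r$-persistence of normally hyperbolic attracting invariant circles (Hirsch-Pugh-Shub) and the $C^1$-persistence of hyperbolic sinks, combined with the compactness of $\mathbb{I}^k$. Given $f\in\mathcal{R}_N$, at every $a$ the chosen $N$ circles and $N$ sinks of $f_a$ survive any sufficiently small $C^r$-perturbation of $f_a$ and depend $C^d$-continuously on $a$, so a finite subcover of $\mathbb{I}^k$ by their stability neighborhoods produces a $C^{d,r}$-neighborhood of $f$ inside $\mathcal{R}_N$.

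Density is the heart of the argument. Given $f_0\in\mathscr{U}$ and $\varepsilon>0$, first approximate $f_0$ by $f\in\mathscr{D}$. By Definition~\ref{def:Berger-domain} there is an open cover $\{J_i\}$ of $\mathbb{I}^k$ and, over each $J_i$, a $C^d$-continuous continuation $a\mapsto Q_a$ of a saddle with multipliers satisfying~\eqref{eq1} carrying a persistent simple tangency $Y_a$. For each $i$ and each large return number $n$, the plan is to write the $n$-th first-return map near $Y_a$ in Shilnikov coordinates and to rescale; by the computation of~\cite{GST08} the limiting family is a one-parameter $3$-dimensional H\'enon-like polynomial family, and the condition $|\lambda^2\gamma|<1<|\lambda\gamma|$ is precisely what forces a Neimark-Sacker bifurcation to precede any period-doubling or strange-attractor bifurcation in the rescaled parameter. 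The resulting Neimark-Sacker window supplies, for the original family, a normally hyperbolic attracting circle at parameter values filling an interval of $J_i$ of length comparable to $|\gamma|^{-n}$. Superposing such windows over many return numbers and over several saddles inside the basic set containing $Q_a$ (possibly after a small $C^{d,r}$-perturbation) shows that the set of parameters carrying one more attracting circle is open and dense in $\mathbb{I}^k$. The companion hyperbolic sinks are produced by the saddle-node/period-doubling windows of the same rescaled family, which are disjoint from the Neimark-Sacker window and are treated by the original scheme of~\cite{Ber16}.

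The main obstacle is promoting ``dense in parameter" to ``every parameter" and combining the two kinds of attractors inductively. A single-scale renormalization only controls an $O(|\gamma|^{-n})$-sub-ball of parameters, so to reach every $a\in\mathbb{I}^k$ one must orchestrate infinitely many scales; this is where the full strength of the Berger-domain hypothesis (persistence of the tangency at \emph{every} $a\in J_i$, not merely $C^d$-degeneracy at one distinguished parameter) is essential, because at every $a$ there is a fresh tangency available to renormalize. A Baire-category perturbation, applied inductively so that normally hyperbolic circles and sinks created at earlier stages are not destroyed by later ones, then converts the density of ``one extra attractor" into its presence at every parameter and passes from $\mathcal{R}_N$ to $\mathcal{R}_{N+1}$, producing the residual set $\mathcal{R}$.
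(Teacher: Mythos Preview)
Your overall Baire scheme ($\mathcal{R}=\bigcap_N\mathcal{R}_N$) and your identification of the GST rescaling and the Neimark--Sacker mechanism are correct, but the argument for density of $\mathcal{R}_N$ has a genuine gap in the step ``promote dense-in-parameter to every-parameter''. You write that the Neimark--Sacker window supplies an attractor ``at parameter values filling an interval of $J_i$ of length comparable to $|\gamma|^{-n}$'', and then propose to superpose such windows over many return numbers $n$ and several saddles, invoking Baire category to pass from an open dense set of parameters to all of $\mathbb{I}^k$. This cannot work: a countable union of parameter windows is at best open and dense in $J_i$, never all of $J_i$, and Baire category in the parameter space gives you nothing here --- you need a \emph{single} $C^{d,r}$-small perturbation of the family $f$ after which the attractor exists at \emph{every} $a\in J_i$.

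The paper's mechanism is different and is the real content of the proof. Because the tangency $Y_a$ persists $C^d$-smoothly for all $a\in J_i$, one introduces a \emph{two}-parameter local unfolding $g_{a,\varepsilon}$ with $\varepsilon=(\mu,\varphi)$, where $\mu$ splits the tangency and $\varphi$ perturbs the argument of the complex stable eigenvalue of $Q_a$. The parametrized rescaling lemma then shows that the first-return map $T_n(a,\varepsilon)$, restricted to a two-dimensional attracting center manifold, is $C^{d,r}$-conjugate to the generalized H\'enon map with rescaled parameters $(M,B)$; the map $\varepsilon\mapsto(M,B)$ is a diffeomorphism onto a large region for each fixed $a$ and $n$. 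One then \emph{chooses}, for each $a$, a value $\varepsilon^*_n(a)=(\mu^*_n(a),\varphi^*_n(a))$ landing in the sink region (near the Bogdanov--Takens point) or the invariant-circle region (near the Horozov--Takens point), varying $C^d$-smoothly in $a$, and sets $\tilde g_{n,a}=g_{a,\varepsilon^*_n(a)}$. The entire technical difficulty --- and the part missing from your outline --- is the estimate $\|\tilde g_n-g\|_{C^{d,r}}=O(\alpha^{-d}/n)$, which requires bounding $\partial_a^j\mu^*_n(a)$ and $\partial_a^j\varphi^*_n(a)$ for $|j|\le d$ by inverting the blowing-up rescaling $(M,B)\sim(\gamma_a^{2n}\mu,(\lambda_a\gamma_a)^n\cos(n\varphi))$; these derivative bounds are delicate and occupy Lemma~\ref{Est_der}. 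Note also that a one-parameter unfolding (only $\mu$) is insufficient: both $\mu$ and $\varphi$ are needed to hit a prescribed point in the $(M,B)$-plane, so your description of ``a one-parameter 3-dimensional H\'enon-like family'' misses an essential degree of freedom.
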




\subsection{Topology of families of diffeomorphisms}
\label{sec:topology} Set $\mathbb{I}=[-1,1]$. Given $0<d \leq
r\leq \infty$, $k\geq 1$ and a compact manifolds $\mathcal{M}$ we
denote by $C^{d,r}(\mathbb{I}^k,\mathcal{M})$ the space of
$k$-parameter $C^d$-families $f=(f_a)_a$ of $C^r$-diffeomorphisms
$f_a$ of $\mathcal{M}$ parameterized by $a\in\mathbb{I}^k$ such
that
\begin{equation*}
  \partial^i_a \partial^j_x f_a(x) \ \ \text{exists continuously
  for all $0\leq i\leq d$, \ \   $0\leq i+j\leq r$  \ \  and \ \
   $(a,x)\in \mathbb{I}^k\times \mathcal{M}.$}
\end{equation*}
We endow this space with the topology given by the $C^{d,r}$-norm
$$
\|f\|_{{C}^{d,r}}=\max\{\sup \|\partial^i_a\partial_x^j f_a (x):
\, 0\leq i \leq d, \  0\leq i + j \leq r\} \quad \text{where \
$f=(f_a)_a \in {C}^{d,r}(\mathbb{I}^k,\mathcal{M})$.}
$$

\subsection{Structure of the paper}
Section~\S\ref{sec:berger-domain} contains the proof of
Theorem~\ref{thmD}. Independently
in section~\S\ref{sec:thm-thmA} we prove Theorem~\ref{thm-thmA}.
Actually, the proof of Theorem~\ref{thm-thmA} only requires
Definition~\ref{def:Berger-domain}.

\section{Berger domains: Proof of Theorem~\ref{thmD}}
\label{sec:berger-domain} In this section we will prove the
existence of $C^{d,r}$-Berger domains
of
codimension $u>0$ for families of diffeomorphisms with $d<r-1$ and
manifolds of dimension $m> u^2+u \geq 2$ (see \S\ref{BergerDomain}
and Definition~\ref{def:Berger-domain}).

Now we will introduce the family that will be ``the organizing
center'' of Berger domains. To do this, we need some results
from~\cite{BR20}. In~\cite[Thm.~B]{BR20} we construct an open set
$\mathscr{U}\subset C^{d,r}(\mathbb{I}^k,\mathcal{M})$
for $0<d<r-1$  and $\dim \mathcal{M} \geq 3$ where any family
$f=(f_a)_a\in \mathscr{U}$ has a $C^d$-degenerate unfolding of a
homoclinic tangency of codimension $u$ at $a=0$ (actually at any
parameter $a_0\in \mathbb{I}^k$). The construction of this open
set is local and only requires two ingredients: a family of
blenders (a certain type of a hyperbolic basic set)
 $\Gamma=(\Gamma_a)_a$ and a family of folding manifolds
$(S_a)_a$ (a certain type of manifold that folds along some
direction). We refer to~\cite{BR20} for a precise definition of
these objets.  To be more specific, the main result could be
stated as follows:

\begin{thm}[{\cite[Thm.~7.5, Rem.~7.6]{BR20}}] \label{thm-BR20}
For any $0<d<r-1$ and $k\geq 1$, there exists a $C^d$-family
$\Phi=(\Phi_a)_a$ of locally defined $C^r$-diffeomorphisms of
$\mathcal{M}$ of dimension $m>u+u^2$ having a family of
$cs$-blenders $\Gamma=(\Gamma_a)_a$ with unstable dimension $u\geq
1$
and a family of folding manifolds $\mathcal{S}=(\mathcal{S}_a)_a$
of dimension $m-u$ satisfying the following:

For any $a_0\in \mathbb{I}^k$, any family $g=(g_a)_a$ close enough
to $f$ in the $C^{d,r}$-topology and any $C^{d,r}$-perturbation
$\mathcal{L}=(\mathcal{L}_a)_a$ of $\mathcal{S}$ there exists
$z=(z_a)_a\in C^d(\mathbb{I}^k,\mathcal{M})$ such that
 {
\begin{enumerate}
\item $z_a\in\Gamma_{g,a}$, where $\Gamma_{g,a}$ denotes the
continuation for $g_a$ of the blender $\Gamma_a$,
\item the family of local unstable manifolds
$\mathcal{W}=(W^u_{loc}(z_a;g_a))_a$ and $\mathcal{L}$ have a
tangency
 of dimension $u$ at $a=a_0$ which unfolds $C^d$-degenerately.
\end{enumerate}
}
%
\end{thm}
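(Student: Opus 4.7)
The plan is to combine the parametric blender (\emph{parablender}) machinery developed throughout \cite{BR20} with a careful positioning of a folding manifold so that the covering property of the parablender produces tangencies rather than mere intersections. The key observation is that a $cs$-blender of unstable dimension $u$ is characterized by a \emph{superposition region}: any $(m-u)$-dimensional submanifold sufficiently close (in an appropriate cone) to a reference disk meets the local unstable manifold of some point in the blender. A $C^d$-parablender promotes this to families, so that the covering property holds at the level of $d$-jets with respect to the parameter $a\in\mathbb{I}^k$.

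First I would construct $\Phi=(\Phi_a)_a$ locally on $\mathcal{M}$ as a skew product over an iterated function system of contractions in the stable direction of dimension $m-u$, chosen so that the images of the generators overlap and cover a central $u$-dimensional disk; this is the classical open covering condition yielding a blender. The dimension hypothesis $m>u+u^2$ is used to provide enough room in the stable direction to lift the IFS to an action on the $u(m-u)$-dimensional Grassmannian of candidate tangent $u$-planes, so that the covering property can be upgraded to a covering of $d$-jets of $u$-planes along the $k$-parameter family. The restriction $d<r-1$ reflects the loss of one derivative in the continuous dependence of local unstable manifolds on the dynamics, and a further loss arising from differentiating the IFS action on jets with respect to the parameter.

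Second I would take the folding manifold $\mathcal{S}=(\mathcal{S}_a)_a$ to be an $(m-u)$-dimensional manifold exhibiting a fold along a $u$-dimensional direction, with the fold locus positioned at $a=a_0$ inside the superposition region of the family $\Gamma_a$. Any $C^{d,r}$-perturbation $(g,\mathcal{L})$ admits a continuation $\Gamma_{g,a}$ of the blender preserving its covering property, and the perturbed manifold $\mathcal{L}_a$ still folds inside this covering region; applying the $d$-jet covering property of $\Gamma$ to the $d$-jet of $\mathcal{L}_a$ along its fold locus then furnishes a family $z=(z_a)_a\in C^d(\mathbb{I}^k,\mathcal{M})$ with $z_a\in\Gamma_{g,a}$ and the unstable leaf through $z_a$ meeting $\mathcal{L}_a$ with contact of dimension $u$.

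The main obstacle is to promote this covering of jets into a genuine $C^d$-degenerate tangency at $a=a_0$: one must simultaneously control $\|p_a-q_a\|=o(\|a-a_0\|^d)$ on base points \emph{and} achieve matching of the tangent $u$-planes $E_a$ and $F_a$ to the same order. This forces one to build the parablender so that it covers, in parameter space, not the $d$-jets of points alone but the $d$-jets of a Grassmannian jet bundle over the ambient manifold. Producing such an \emph{enriched} parablender is the technical heart of the argument and is precisely where the dimension bound $m>u+u^2$ enters: the extra $u(m-u)$ coordinates coming from the Grassmannian direction must each be controlled up to order $d$ simultaneously by the IFS covering, which in turn requires $\dim \mathcal{M}-u$ to dominate $u\cdot u$.
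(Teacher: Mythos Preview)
This theorem is not proved in the present paper: it is quoted verbatim from \cite[Thm.~7.5, Rem.~7.6]{BR20}, and the authors simply invoke it as a black box to build the organizing family $\Phi$. There is therefore no ``paper's own proof'' to compare your proposal against here.

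That said, your sketch is a reasonable high-level outline of the strategy one expects in \cite{BR20}: a $cs$-parablender $\Gamma$ whose covering property is lifted from base points to $d$-jets of pairs (point, tangent $u$-plane), combined with a folding manifold whose fold locus is placed inside the superposition region so that the covering yields a tangency rather than a transverse intersection. Your identification of the role of $d<r-1$ (loss of regularity when passing to the jet/Grassmannian dynamics) and of the dimension bound $m>u+u^{2}$ (enough strong-stable room for the blender to cover both the $u$ normal directions controlling $p_a-q_a$ and the $u^{2}$ directions controlling $E_a-F_a$) is essentially the right heuristic. One small imprecision: the Grassmannian count you give, ``$u(m-u)$ coordinates'', is the dimension of the full Grassmannian $G_u(\mathbb{R}^m)$, but the quantity that must be killed to order $d$ is the projection of $T_{p_a}W^u$ onto the $u$-dimensional normal of $W^s$, which lives in $\mathrm{Hom}(\mathbb{R}^u,\mathbb{R}^u)$ and has dimension $u^2$; together with the $u$ base-point conditions this is what gives $u+u^2$, not a comparison between $m-u$ and $u(m-u)$.

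Since the current paper treats the theorem as input, your write-up would be better framed as a pointer to \cite{BR20} with a one-line summary, rather than as an independent argument.
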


Let us consider the family $\Phi=(\Phi_a)_a$  given in the above
theorem.  Assume in addition the next hypothesis:
\begin{enumerate}[label=(H\arabic*)]
\item \label{H1}
$\Phi_a$ has a equidimensional cycle between
saddle periodic points $P_a$ and $Q_a$,
\item \label{H2} $P_a$ belongs to $\Gamma_a$ and the folding manifold $S_a$
is contained in $W^s(Q_a,\Phi_a)$.
\end{enumerate}
Theorem~\ref{thm-BR20} implies that the family $\Phi=(\Phi_a)_a$
under the above assumptions~\ref{H1} and~\ref{H2} defines a
$C^{d,r}$-open set $\mathscr{U}=\mathscr{U}(\Phi)$ of
$k$-parameter $C^d$-families of $C^r$-diffeomorphisms
such that any $g=(g_a)_a \in \mathscr{U}$ is a $C^d$-degenerate
unfolding at any parameter $a=a_0$ of a tangency of dimension $u$.
The tangency is between $W^s(Q_{a_0},g_{a_0})$ and the local
unstable manifold of some point in the blender $\Gamma_{a_0}$ of
$g_{a_0}$.
Since the codimension of $W^s(Q_{a_0},g_{a_0})$ and the
dimension of the local unstable manifolds of $\Gamma_{a_0}$
coincide, the tangency also has codimension $u$. We will prove
that the open set $\mathscr{U}$ is a $C^{d,r}$-Berger domain. To
do this, we will first need the following result,
see~\cite[Lemma~3.7]{Ber16} and~\cite[Lemma~3.2]{Ber17}.


\begin{prop}[Parametrized Inclination Lemma] \label{thm-Parametrized-Inclination-Lemma}
Let $g=(g_a)_a$ be a $C^{d,r}$-family of diffeomorphisms having a
family $K=(K_a)_a$ of transitive hyperbolic sets $K_a$ with
unstable dimension $d_u$. Let $C_a$ be a $C^r$-submanifold of
dimension $d_u$ that intersects transversally a local stable
manifold $W^s_{loc}(x_a,g_a)$ with $x_a\in K_a$ at a point $z_a$
which we assume depends $C^d$-continuously on $a  \in
\mathbb{I}^k$. Then, for any $P_a\in K_a$
there exists a $d_u$-dimensional disc $D_a \subset C_a$
containing $z_a$ such that the family of discs $D=(g^n_a(D_a))_a$
is $C^{d,r}$-close to $W=(W^u_{loc}(P_a,g_a))_a$, for $n$ sufficiently large.
\end{prop}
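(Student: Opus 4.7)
The plan is to establish a parametric version of the classical Inclination ($\lambda$-)Lemma in which the familiar exponential convergence of forward iterates of a transverse disc toward a local unstable manifold is upgraded to $C^{d,r}$-closeness for a whole family of discs. The natural tool is the graph transform, applied fiberwise in $a$ together with uniform hyperbolic estimates for the family $K=(K_a)_a$.

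First I would set up adapted coordinates: using the $C^{d,r}$-dependence of the hyperbolic continuation $K_a$ and its local invariant manifolds, one can choose local charts around each point of $K_a$ in which $W^s_{loc}(x_a,g_a)$ and $W^u_{loc}(x_a,g_a)$ appear as linear subspaces $E^s_a$ and $E^u_a$, the coordinate change varying $C^d$ in $a$ and $C^r$ in $x$. Transversality of $C_a$ to $W^s_{loc}(x_a,g_a)$ at $z_a$ together with $\dim C_a = d_u = \dim E^u_a$ implies that near $z_a$ the disc $C_a$ is the graph of a $C^r$-map $\varphi_a\colon B^u_a \to E^s_a$, and the family $(\varphi_a)_a$ inherits $C^{d,r}$-regularity from $(C_a)_a$ and from the $C^d$-dependence of $z_a$. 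I would then take a small sub-disc $D_a\subset C_a$ centered at $z_a$ and iterate: in these coordinates each forward image $g_a^n(D_a)$, after a suitable restriction, is again a graph $\varphi_a^{(n)}$, and the sequence $\varphi_a^{(n)}$ satisfies the standard graph-transform recursion, for which hyperbolicity of $K_a$ yields an exponential contraction. Consequently $\varphi_a^{(n)}$ converges geometrically in the $C^r$-norm in $x$ to the graph representing $W^u_{loc}(x_a,g_a)$. To reach $W^u_{loc}(P_a,g_a)$ for a prescribed $P_a\in K_a$, I would invoke transitivity of $K_a$ to find an orbit of $x_a$ passing arbitrarily close to $P_a$, then apply the same graph transform along that orbit; all these choices can be made $C^d$-continuously in $a$ because $K_a$ and its invariant manifolds vary $C^d$.

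The main obstacle, and the technical heart of the statement, is controlling the mixed derivatives $\partial_a^{i}\partial_x^{j}\varphi_a^{(n)}$ with $i\le d$ and $i+j\le r$ uniformly as $n\to\infty$. Differentiating the graph-transform recursion in $a$ produces, via the chain rule applied to $g_a^n$, terms involving parameter derivatives that could a priori grow with $n$. The key is to exploit the spectral gap of the hyperbolic splitting: the exponential contraction on $E^s_a$ must dominate the growth coming from the parameter chain rule, which is the structural reason behind the regularity gap $d<r$ in the surrounding results. Formally, one can phrase this as a fiber contraction theorem on a weighted $C^{d,r}$-space of parametrized graphs, yielding that the family $(g_a^n(D_a))_a$ converges in $C^{d,r}$-norm to the target family $(W^u_{loc}(P_a,g_a))_a$. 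This is precisely the line of argument carried out in \cite[Lemma~3.7]{Ber16} and \cite[Lemma~3.2]{Ber17}, on which the statement above relies.
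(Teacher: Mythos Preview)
The paper does not give its own proof of this proposition: it is stated and immediately attributed to \cite[Lemma~3.7]{Ber16} and \cite[Lemma~3.2]{Ber17}, exactly the references you cite at the end of your sketch. So there is nothing to compare against beyond noting that your outline---adapted $C^{d,r}$ charts, graph-transform recursion, and uniform control of mixed $\partial_a^i\partial_x^j$ derivatives via a fiber-contraction argument---is indeed the shape of the argument in Berger's papers.

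One small correction: you attribute the regularity gap $d<r$ (or $d<r-1$) to the need for the hyperbolic contraction to beat the growth of parameter derivatives in the inclination lemma. That is not where the gap comes from. The parametrized $\lambda$-lemma holds in the full $C^{d,r}$ range $d\le r$, because the contraction along $E^s$ is exponential in $n$ while the parameter chain rule contributes only polynomially in $n$; no loss of regularity occurs here. The restriction $d<r-1$ in the surrounding results (Theorem~\ref{thmD}) originates instead from the parablender construction in \cite{BR20}, where one must control tangencies at the level of the Grassmannian bundle.
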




Using the parameterized inclination lemma, the following
proposition proves that the above open set $\mathscr{U}$ is a
Berger domain according to the first tentative (weaker) definition given
in~\S\ref{BergerDomain}.

\begin{prop} \label{prop-dense-of-paratangencies}
For any $a_0\in \mathbb{I}^k$ and $g\in \mathscr{U}$, there is
$C^{d,r}$-arbitrarily close to $g$ a family $f=(f_a)_a$ such that
$f_{a}=g_a$ for any parameter far from a small neighborhood of
$a_0$ and {which displays a $C^d$-degenerate unfolding at $a=a_0$
of a homoclinic tangency  of codimension $u$ associated with the
periodic point $Q_{a_0}(f)$.}
\end{prop}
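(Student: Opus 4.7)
The plan is to transport the $C^d$-degenerate tangency provided by Theorem~\ref{thm-BR20}---between the unstable manifold of a blender point $z_a$ and a piece of $W^s(Q_a(g))$---onto the unstable manifold of $Q_a$ itself via the parametrized inclination lemma (Proposition~\ref{thm-Parametrized-Inclination-Lemma}), and then to turn the resulting approximate tangency into an exact one by a $C^{d,r}$-small perturbation of $g$ localized in both phase space and parameter.

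Apply Theorem~\ref{thm-BR20} to $g\in\mathscr{U}$ taking $\mathcal{L}_a$ to be the local piece of $W^s(Q_a(g),g_a)$ arising by continuation from the folding manifold $\mathcal{S}_a$; hypothesis~\ref{H2} and the openness of $\mathscr{U}$ ensure that $\mathcal{L}$ is a $C^{d,r}$-perturbation of $\mathcal{S}$. This yields a $C^d$-family $z=(z_a)_a$ with $z_a\in\Gamma_{g,a}$ and tangency data $p_a\in W^u_{loc}(z_a;g_a)$ and $q_a\in\mathcal{L}_a\subset W^s(Q_a(g),g_a)$ realizing a $C^d$-degenerate tangency of codimension $u$ at $a=a_0$. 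On the other hand, the equidimensional cycle~\ref{H1}, together with $P_a\in\Gamma_{g,a}$, supplies (possibly after a backward iterate) a $C^d$-continuous family of transverse heteroclinic points $w_a\in W^u(Q_a(g),g_a)\pitchfork W^s_{loc}(x_a,g_a)$ for some $x_a\in\Gamma_{g,a}$. Let $C_a$ be a $u$-dimensional $C^r$-disc in $W^u(Q_a(g),g_a)$ meeting $W^s_{loc}(x_a,g_a)$ transversely at $w_a$. Proposition~\ref{thm-Parametrized-Inclination-Lemma}, applied with $K_a=\Gamma_{g,a}$ and target $z_a$, produces a subdisc $D_a\subset C_a$ such that for $n$ large the family $(g_a^n(D_a))_a$ is $C^{d,r}$-close to $(W^u_{loc}(z_a;g_a))_a$. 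Since $D_a\subset W^u(Q_a(g),g_a)$, this exhibits a piece of $W^u(Q_a)$ whose tangency data with $\mathcal{L}_a$ differs from the $C^d$-degenerate data of $z_a$ by a $C^{d,r}$-small amount.

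To make the tangency exact, perform a $C^{d,r}$-small perturbation of $g$ supported in a product $J\times V$, where $J$ is a small neighborhood of $a_0$ (via a bump $\beta(a)$) and $V$ is a small neighborhood in $\mathcal{M}$ of some intermediate forward iterate $g_a^k(D_a)$, $0<k<n$, chosen disjoint from $\Gamma_{g,a}$, from the orbit of $Q_a$, from $\mathcal{L}_a$, and from the tangency region. The resulting family $f=(f_a)_a$ satisfies $f_a=g_a$ outside $J$, still lies in $\mathscr{U}$, and shares with $g$ the point $Q_a$, the blender, the cycle, the folding manifold, and the unfolding at $z_a$. By an implicit-function argument on the $d$-jet in $a$ of the signed distance from $f_a^n(D_a)$ to $q_a$ together with the Grassmannian coordinate of its tangent plane, the perturbation can be chosen so that $f_a^n(D_a)$ and $\mathcal{L}_a\subset W^s(Q_a(f),f_a)$ realize a $C^d$-degenerate tangency at $a=a_0$. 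Since $f_a^n(D_a)\subset W^u(Q_a(f),f_a)$, this is the desired homoclinic tangency of codimension $u$ at $Q_{a_0}(f)$ with $C^d$-degenerate unfolding.

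The main obstacle is the final adjustment: prescribing, by a single $C^{d,r}$-small perturbation supported in $J\times V$, the full $d$-jet in $a$ of the contact data (position and tangent plane) of $f_a^n(D_a)$ at the prospective tangency, without disturbing the blender, the equidimensional cycle, the folding manifold, or the unfolding structure at $z_a$. This is handled by a Whitney-type extension of $d$-jets along the $C^d$-curve $a\mapsto g_a^k(w_a)$, cut off by $\beta(a)$, combined with the openness of the blender and of the heteroclinic cycle under $C^{d,r}$-small perturbations.
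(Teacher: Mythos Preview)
Your overall strategy---invoke Theorem~\ref{thm-BR20} to get a $C^d$-degenerate tangency between a blender unstable leaf and $W^s(Q_a)$, use the parametrized inclination lemma to bring a disc of $W^u(Q_a)$ $C^{d,r}$-close to that leaf, then make a localized perturbation---is exactly the paper's. The difference, and the reason your write-up is not yet a proof, is in the perturbation step.

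You perturb at an \emph{intermediate} iterate $g_a^k(D_a)$ and then propose to match the full $d$-jet in $a$ of the contact data at the tangency via an implicit-function/Whitney-extension argument. You flag this yourself as the main obstacle, and your sketch does not verify the needed surjectivity of the jet map nor explain how the correction is transported through the remaining $n-k$ iterates of $g_a$. The paper bypasses this obstacle completely: it perturbs in a neighborhood $U$ of the tangency point itself. Since $(g_a^n(D_a))_a$ is $C^{d,r}$-close to $(W^u_{loc}(x_a,g_a))_a$, there is a $C^{d,r}$-family of diffeomorphisms $\tau_{n,a}$, $C^{d,r}$-close to the identity, carrying $g_a^n(D_a)$ onto $W^u_{loc}(x_a,g_a)$ inside $U$. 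Setting $H_{n,a}=\mathrm{id}+\theta\,(\tau_{n,a}-\mathrm{id})$ with a bump $\theta$ in $(a,x)$ and $g_{n,a}=H_{n,a}\circ g_a$, a piece of $W^u(Q_a,g_{n,a})$ now passes through $p_a$ with tangent $u$-plane $F_a$ \emph{exactly}; the forward $g_a$-orbit of the stable disc through $q_a$ avoids $U$, so that disc and $(q_a,E_a)$ are unchanged. The pre-existing $d$-jet identity $J(\vec p)=J(\vec q)$ therefore yields the homoclinic $C^d$-degenerate tangency for $Q_{a_0}$ with no jet-matching required. Replacing your intermediate-iterate perturbation by this ``superpose the disc onto the leaf'' trick is the missing idea.
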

\begin{proof}
By construction, any $g=(g_a)_a\in \mathscr{U}$ is a
$C^d$-degenerate unfolding at $a=a_0$ of a tangency between
$W^s(Q_{a_0},g_{a_0})$ and some local unstable manifold
$W^u_{loc}(x_{a_0},g_{a_0})$ of a point $x_{a_0}\in \Gamma_{a_0}$.
From the assumptions~\ref{H1} and~\ref{H2} we have that both $x_a$
and $Q_a$ belongs to the homoclinic class $H(P_a,g_a)$ of $P_a$
for $g_a$. Moreover, we get that $W^{u}(Q_a,g_a)$ intersects
transversally $W^s_{loc}(x_a,g_a)$ at a point $z_a$ which depends
$C^d$-continuously on $a\in\mathbb{I}^k$. Then
Proposition~\ref{thm-Parametrized-Inclination-Lemma} implies the
existence of discs $D_a$ in $W^u(Q_a,g_a)$ containing $z_a$ such
that the family $D_n=(g^n_a(D_a))_a$ is $C^{d,r}$-close to
$W=(W^u_{loc}(x_a,g_a))$ when $n$ is large.
By a small perturbation, we now will find a new family
$C^{d,r}$-close to $g$, which is a $C^d$-degenerate unfolding at
$a=a_0$ of a homoclinic tangency of codimension $u$ associated
with the continuation of the periodic point $Q_{a_0}$.

We  take local coordinates denoted by $x$ in a neighborhood of
$x_{a_0}$
which correspond to the origin. 
Also denote by $y$ the tangency point between
$W^s(Q_{a_0},g_{a_0})$ and the local unstable manifold
$W^u_{loc}(x_{a_0},g_{a_0})$.
Since the tangency (of dimension $u$) unfolds $C^d$-degenerately,
we have $\vec{p}=(p_a,E_a)_a, \vec{q}=(q_a,F_a)_a\in
C^d(\mathbb{I}^k,G_u(\mathcal{M}))$  such that
\begin{align*}
q_a \in W^s(Q_{a},g_{a}), \quad &\text{and} \quad E_a \subset
T_{q_{a}}W^s(Q_{a},g_{a}), \\
p_a \in W^u_{loc}(x_{a},g_{a}) \quad &\text{and} \quad  F_a
\subset
T_{p_a}W^u_{loc}(x_{a},g_{a}), \\
p_{a_0}=q_{a_0}=y \quad &\text{and} \quad J(\vec{p})=J(\vec{q}).
\end{align*}
Take $\delta>0$ such that, in these local coordinates, the
$2\delta$-neighborhoods of $y$ and its iterations by $g_{a_0}$ and
$g^{-1}_{a_0}$ are pairwise disjoint. Let
$U$ be the $2\delta$-neighborhood of $y$
and assume that $p_a$ and $q_a$ belong to $U$ for all $a$ close
enough to $a_0$. Call $C_a$ the local disc in $W^s(Q_{a},g_{a})$
containing the point $q_a$ and we may suppose that $U$ is such that
the forward iterates of $C_a$, with respect to $g_a$,
are disjoint from each other and from $U$.
Since $D_n=(g^n_a(D_a))_a$ is $C^{d,r}$-close to $W$, we obtain a
$C^{d,r}$-family $\tau_n=(\tau_{n,a})_a$ of diffeomorphisms of
$\mathbb{R}^m$ which sends, in local coordinates,
$g^n(D_a)$ onto $W^u_{loc}(x_a,g_a)$,
is equal to the identity
outside of $U$ and is $C^{d,r}$-close to the constant family
$I=(\mathrm{id}_{\mathbb{R}^m})_a$ as $n\to \infty$.
Let $t_a$ be the point in $D_n\subset W^u(Q_{a},g_{a})$ so that $\tau_{n,a}(t_a)=p_a$.

Consider a $C^\infty$-bump function $ \phi:\mathbb{R}\to
\mathbb{R}$ with support in $[-1,1]$ and equal to 1 on
$[-1/2,1/2]$. Let
$$
\rho: a=(a_1,\dots,a_k) \in \mathbb{I}^k \mapsto
\phi(a_1)\cdot\ldots\cdot \phi(a_k) \in \mathbb{R}.
$$
For a fixed $\alpha>0$, define  the perturbation $g_n=(g_{n,a})_a$
of $g=(g_a)_a$ by
$$
  g_{n,a}= H_{n,a} \circ g_a \quad \text{for all $a\in \mathbb{I}^k$,}
$$
where $H_{n,a}$  in the above local coordinates takes the form
\begin{align*}
  H_{n,a}(x)=x+ \theta \cdot (\tau_{n,a}(x)-x)
  \quad \text{where} \ \
  \theta=\rho\left(\frac{a-a_0}{2\alpha}\right)\phi\left(\frac{\|x-y\|}{2\delta}\right)
\end{align*}
and is the identity otherwise. Observe that if $a\not\in a_0 +
(-2\alpha,2\alpha)^k$ or $x\not\in U$ then $H_{n,a}(x)=x$. In
particular, $g_{n,a}(x)=g_a(x)$ if $a\not\in a_0 +
(-2\alpha,2\alpha)^k$ or $x\not\in g_a^{-1}(U)$.

On the other hand, if $a\in a_0 + (-\alpha,\alpha)^k$ and $x\in
U$, then $H_{n,a}(x)=\tau_{n,a}(x)$. This implies that for $a\in
a_0+(-\alpha,\alpha)^k$, the point $g_a^{-1}(t_a)$ that belongs to
$W^u(Q_a,g_a)$ is sent by $g_{n,a}$ to $p_a=\tau_{n,a}(t_a)$ and
therefore $p_a \in W^u(Q_a,g_{n,a})$. Moreover, since $\tau_{n,a}$
is a $C^r$-diffeomorphism ($r\geq 2$) we also have that $F_a
\subset T_{p_a}W^u(Q_a,g_{n,a})$.

At $a=a_0$ we have that $\tau_{n,a_0}(t_{a_0})=p_{a_0}=q_{a_0}\in
W^s(Q_{a_0}, g_{n,a_0})$ and so the stable and unstable manifolds
of $Q_{a_0}$ for $g_{n,a_0}$ meet at this point. Moreover, since
$\tau_{n,a_0}$ is a $C^r$-diffeomorphism ($r\geq 2$) this
intersection is still non-transverse, i.e., we have a homoclinic
tangency of codimension~$u$. Observe that the perturbed family
$g_{n,a}$ does not affect the disc $C_{a}$ of the stable manifold
of $W^s(Q_a, g_a)$ in $U$. That is, $C_{a}\subset W^s(Q_a,
g_{n,a})$ and $E_a \subset T_{q_a}W^s(Q_a,g_{n,a})$.  Hence, since
from the initial hypothesis $J(\vec{p})=J(\vec{q})$  we get  a
$C^d$-degenerate unfolding at $a=a_0$ of a homoclinic tangency of
codimension $u$ associated with  the hyperbolic periodic point
$Q_{a_0}(g_n)$. Finally, observe
$$
  \|f-g_n\|_{C^{d,r}}=\|(I-H)\circ f\|_{C^{d,r}} \leq \|f\|_{C^{d,r}}
  \|\theta (\tau_n -
  I)\|_{C^{d,r}} \leq \|f\|_{C^{d,r}} o_{\alpha\to 0}(\alpha^{-d}) o_{\delta\to
  0}(\delta^{-r}) \|\tau_n-I\|_{C^{d,r}}.
$$
Since $\|\tau_n-I\|_{C^{d,r}}$ goes to zero as $n \to \infty$, we
can obtain that for a given $\epsilon>0$, there are $n$ large
enough so that $\|f-g_n\|_{C^{d,r}}<\epsilon$.
\end{proof}

\begin{rem} \label{rem-finite-points}
Notice that the perturbation in the previous proposition,
to create the degenerate homoclinic  unfolding at $a_0$, is local
(in the parameter space and in the manifold). Thus, fixing
$N\in\mathbb{N}$ and a finite number of points $a_1,\dots,a_N\in
\mathbb{I}^k$, we can perform the same type of perturbation
inductively and obtain a
dense set of families in $\mathscr{U}$ having degenerate unfoldings at any
$a=a_i$ for
$i=1,\dots,N$.
\end{rem}

The following proposition is an adaptation to the context of
diffeomorphisms of~\cite[Lemma~5.4]{Ber16}. Roughly speaking, this
proposition explains how it is possible to ''stop'' a tangency for
an interval of parameters using a degenerate unfolding,
i.e, how to create a persistent homoclinic
tangency in the language of~\cite{Ber17}.

\begin{prop} \label{prop-paratangecia}
Let $f=(f_a)_a$ be a $k$-parameter $C^d$-family of
$C^r$-diffeomorphisms of a manifold of dimension $m\geq 2$.
Suppose that $f$ is a $C^d$-degenerate unfolding at $a=a_0$ of a
homoclinic tangency of codimension $u>0$  associated with a
hyperbolic periodic point $Q$. Then, for any $\epsilon>0$ there
exists $\alpha_0>0$ such that for every $0<\alpha<\alpha_0$ there
is a $C^{d}$-family $h=(h_a)_a$ of $C^r$-diffeomorphisms such~that
\begin{enumerate}
\item $h$ is $\epsilon$-close to $f$ in the $C^{d,r}$-topology,
\item $h_a=f_a$ for every $a \not \in a_0 + (-2\alpha,2\alpha)^k$,
\item $h_a$ has a homoclinic tangency $Y_a$ of codimension $u$ associated with the
continuation  $Q_a$ of $Q$ for all $a\in a_0+ (-\alpha,\alpha)^k$
and which depend $C^d$-continuously on the parameter $a$.
\end{enumerate}
\end{prop}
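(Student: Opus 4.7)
The plan is to mimic the construction in the proof of Proposition~\ref{prop-dense-of-paratangencies} (and of Berger's Lemma~5.4 in~\cite{Ber16}), writing $h_a = H_a \circ f_a$ with $H_a$ a $C^{d,r}$-small perturbation of the identity supported in $(a_0 + (-2\alpha,2\alpha)^k) \times U$, where $U$ is a small neighborhood of $y := p_{a_0} = q_{a_0}$. On the inner window $a_0 + (-\alpha,\alpha)^k$, $H_a$ should coincide with a local diffeomorphism $\psi_a$ that sends $(p_a, E_a)$ to $(q_a, F_a)$, so that the persistent homoclinic tangency sits at $Y_a := q_a$ with tangent space $F_a$.

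In local coordinates around $y$, I would take $\psi_a(x) = R_a(x - p_a) + q_a$, with $R_a \in SO(m)$ a rotation depending $C^d$-continuously on $a$, mapping $E_a$ onto $F_a$, and equal to the identity at $a = a_0$. The degenerate unfolding hypothesis yields $q_a - p_a = o(\|a - a_0\|^d)$ and $R_a - \mathrm{id} = o(\|a - a_0\|^d)$, so $\psi_a - \mathrm{id}$ vanishes to order $d$ at $a_0$ as a $C^d$ function of $a$; by the $C^d$-Taylor formula, $\partial_a^i(\psi_a - \mathrm{id}) = o(\|a - a_0\|^{d - i})$ for every $0 \leq i \leq d$, uniformly on bounded $x$-domains. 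I would then localize with the bump
$$
\theta(a,x) = \rho\!\left(\frac{a - a_0}{2\alpha}\right)\phi\!\left(\frac{\|x - y\|}{2\delta}\right),
$$
set $H_a(x) = x + \theta(a,x)(\psi_a(x) - x)$, and fix $\delta$ small (independent of $\alpha$) so that $U = \{\|x - y\| < 2\delta\}$ is disjoint from $Q_{a_0}$ and from the finitely many nearby iterates of $y$ under $f_{a_0}$ that must be avoided. With this choice only the single piece of $W^u(Q_a, f_a)$ through $p_a$ is perturbed, while the forward orbit of $q_a$ returning toward $Q_a$ is untouched; property~(2) is then immediate from the support of $\theta$.

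For $a$ in the inner window and $\|x - y\| \leq \delta$, $H_a = \psi_a$; in particular $H_a(p_a) = q_a$ and $D_{p_a}H_a(E_a) = R_a E_a = F_a$. Combined with the support choice above, $W^u(Q_a, h_a)$ contains $q_a$ with tangent $F_a$, while $W^s(Q_a, h_a)$ still contains $q_a$ with the same tangent $F_a$ (the relevant branch of the stable manifold is unaffected by $H_a$ since the forward orbit of $q_a$ escapes $U$). As $\dim T_{q_a}W^u = u = \dim F_a$, we get $T_{q_a} W^u \subset T_{q_a} W^s$, so $Y_a := q_a$ is a codimension-$u$ homoclinic tangency for $h_a$ depending $C^d$-continuously on $a$, proving~(3).

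The main obstacle is~(1), the $C^{d,r}$-norm estimate. Expanding $\partial_a^i \partial_x^j(\theta \cdot (\psi_a - \mathrm{id}))$ by Leibniz, the parameter bump contributes $\alpha^{-(i - i')}$, the spatial bump contributes $\delta^{-(j - j')}$, and --- since $\psi_a$ is affine in $x$, so only $j' \in \{0, 1\}$ matter --- the factor $\partial_a^{i'} \partial_x^{j'}(\psi_a - \mathrm{id})$ is $o(\alpha^{d - i'})$ on the parameter support by the Taylor estimate above. Each summand is therefore bounded by $o(\alpha^{d - i}) \delta^{-r}$, which tends to $0$ as $\alpha \to 0$ for fixed $\delta$ and any $i \leq d$; picking $\alpha_0$ so small that this uniform bound, multiplied by $\|f\|_{C^{d,r}}$, stays below $\epsilon$ yields~(1). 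The delicate point --- and the only place where the full strength of the $C^d$-degeneracy is really used --- is precisely this $\alpha^{-d}$-versus-$o(\alpha^d)$ cancellation; a merely $C^0$-level closeness of $p_a$ to $q_a$ would be insufficient.
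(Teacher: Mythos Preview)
Your proof is correct and follows essentially the same route as the paper's: both write $h_a = H_a \circ f_a$ with $H_a$ a bump-localized interpolation between the identity and an affine map aligning the pair $(p_a,E_a)$ with $(q_a,F_a)$, and both obtain the $C^{d,r}$-smallness from the cancellation between the $\alpha^{-d}$ cost of differentiating the parameter bump and the $o(\|a-a_0\|^d)$ decay of $p_a-q_a$ and of the linear part. The only cosmetic differences are that you push $p_a$ onto $q_a$ using a rotation $R_a\in SO(m)$, whereas the paper pushes $q_a$ onto $p_a$ and phrases $R_a$ as an orthogonal projection; your choice of a genuine rotation in fact makes it cleaner to see that $H_a$ is a diffeomorphism.
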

\begin{proof}
By assumption $f$ is a $C^d$-degenerate unfolding at $a=a_0$ of a
homoclinic tangency $Y$ associated with a hyperbolic periodic
point $P$. By the definition of degenerate unfolding we have
points $p_a\in W^u(Q_a,f_a)$, $q_a\in W^s(Q_a,f_a)$ and
$c$-dimensional subspaces $E_a$ and $F_a$ of $T_{p_a}W^u(Q_a,f_a)$
and $T_{q_a}W^s(Q_a,f_a)$ respectively such that
\begin{equation} \label{eq:o(ad)}
 d(p_a,q_a)=o(\|a-a_0\|^{d}) \ \ \text{and} \ \
d(E_a,F_a)={o(\|a-a_0\|^{d})} \ \ \text{at $a=a_0$}.
\end{equation}
Here $Q_a$ is the continuation of $Q_{a_0}=P$ for $f_a$. Also
$p_{a_0}=q_{a_0}=Y$ and $(p_a,E_a)$, $(q_a,F_a)$ vary
$C^d$-continuously with respect to the parameter $a\in
\mathbb{I}^k$. We  take local coordinates $x$  in a neighborhood
of $Q$ which correspond to the origin.
By considering an iteration if necessary, we
assume that the tangency point $Y$ belongs to
this neighborhood of local coordinates. Take $\delta>0$ so that
the $2\delta$-neighborhoods of $Y$ and its iterations by $f_{a_0}$
and $f^{-1}_{a_0}$ are pairwise disjoint.
Namely, we will denote by $U$ the $2\delta$-neighborhood of $Y$.
Assume that
$p_a$ and $q_a$ belong to $U$ for all $a$ close enough to $a_0$.
From~\eqref{eq:o(ad)} it follows that
\begin{equation} \label{eq:orden1}
  p_a-q_a=o(\|a-a_0\|^d) \quad \text{at \  $a=a_0$.}
\end{equation}
Observe that the Grasmannian distance between $E_a$ and $F_a$
 is given by the norm of $I-R_a$ restricted
to $F_a$, where $I$ denotes the identity and $R_a$ is the
orthogonal projection onto $E_a$. Then we obtain
from~\eqref{eq:o(ad)} that
\begin{equation} \label{eq:orden2}
I-R_a =o(\|a-a_0\|^d) \quad \text{at \ $a=a_0$}.
\end{equation}
We would like that the rotation occurs around the point $p_a$
and so consider
$\tilde{R}_a x =R_a (x-p_a) + p_a$.
Since $(I-\tilde{R}_a)x=(I-R_a)x+(R_a-I)p_a$ then
from~\eqref{eq:orden2} we still have
\begin{equation} \label{eq:orden3}
 I-\tilde{R}_a =o(\|a-a_0\|^d) \quad \text{at \ $a=a_0$}.
\end{equation}

Consider a $C^\infty$-bump function $ \phi:\mathbb{R}\to
\mathbb{R}$ with support in $[-1,1]$ and equal to 1 on
$[-1/2,1/2]$. Let $$\rho: a=(a_1,\dots,a_k) \in \mathbb{I}^k
\mapsto \phi(a_1)\cdot\ldots\cdot \phi(a_k) \in \mathbb{R}.
$$
For a fixed $\alpha>0$, define the perturbation $h=(h_a)_a$ of
$f=(f_a)_a$ by the relation
$$
  h_{a}= H_{a} \circ f_a \quad \text{for all $a\in \mathbb{I}^k$}.
$$
Here $H_{a}$  in the above local coordinates takes the form
\begin{align*}
  \bar{x}=\big(I-\theta\cdot(I-\tilde{R}_a)\big)\big(x- \theta \cdot
  (q_a-p_a)\big) \quad \text{where} \ \ \theta=\rho\left(\frac{a-a_0}{2\alpha}\right)\phi\left(\frac{\|x-Y\|}{2\delta}\right)
\end{align*}
and is the identity otherwise. Observe that if $a\not\in a_0 +
(-2\alpha,2\alpha)$ or $x\not\in U$ then $H_a(x)=x$. In
particular, $h_a(x)=f_a(x)$ if $a\not\in a_0 + (-2\alpha,2\alpha)$
or $x\not\in f_a^{-1}(U)$. On the other hand, if $a\in a_0 +
(-\alpha,\alpha)$ and $x\in U$ then
$$H_a(x)=\tilde{R}_a\big(x-(q_a-p_a)\big).$$
Since $\tilde{R}_a$ fixes the point $p_a$, we have that
$H_a(q_a)=p_a$. This implies that the point
$q_a^{-1}=f_a^{-1}(q_a)$ that belongs to $W^u(Q_a,f_a)$ is sent by
$h_a$ to $p_a$ which belongs to $W^s_{loc}(Q_a,f_a)$. As the orbit
of $f^{n}_a(p_a)$ for $n\geq 0$ and $f_a^{-n}(q^{-1}_a)$ for $n>0$
never goes through $f_a^{-1}(U)$, then $p_a \in
W^s_{loc}(P_a,h_a)$ and $q_a^{-1}\in W^u(Q_a,h_a)$. Thus, the
stable and unstable manifolds of $Q_a$ for $h_a$ meet at
$p_a=h_a(q_a^{-1})$. Moreover, $F_a^{-1}=Df_a(q^{-1}_a)F_a$ is a
subspace of $T_{q_a^{-1}}W^u(Q_a,f_a)=T_{q_a^{-1}}W^u(Q_a,h_a)$
and
$$
Dh_a(q_a^{-1})F_a^{-1}=
DH_a(q_a)Df_a(q_a^{-1})F_a^{-1}=DH_a(q_a)F_a=D\tilde{R}_a(q_a)
F_a= R_aF_a=E_a.
$$
Since $E_a$ is a subspace of
$T_{p_a}W^s(Q_a,f_a)=T_{p_a}W^s(Q_a,h_a)$, then the intersection
between the stable and unstable manifolds of $Q_a$ for $h_a$ is
tangencial.

To conclude the proposition we only need to prove that for a given
$\epsilon>0$ we can find $\alpha_0$ such that for any
$0<\alpha<\alpha_0$ the above perturbation $h=h(\alpha)$ of $f$ is
actually $\epsilon$-close in the $C^{d,r}$-topology. Notice that
the $C^{d,r}$-norm satisfies
$$
 \|h-f\|_{C^{d,r}}=\|(H-I) \circ f \|_{C^{d,r}}  \leq
  \|I-H\|_{C^{d,r}} \, \|f\|_{C^{d,r}}.
$$
Thus we only need to calculate the $C^{d,r}$-norm of the family
$(I-H_{a})_a$. Since $H_{a}=I$ if $a\not \in a_0 +
(-2\alpha,2\alpha)^k$ or $x\not \in U$ then
\begin{align*}
  \|I-H_{a}\|_{C^{d,r}}\leq\left\| \theta \cdot (I-\tilde{R}_a)(x-\theta\cdot (q_a-p_a))+\theta\cdot (q_a-p_a)\right\|_{C^{d,r}}.
\end{align*}
Since the $C^{d,r}$-norm of $\phi(\|x-Y\|/2\delta)$ is bounded
(depending only on $\delta$), we can disregard this function from
the estimate. Then, to bound the $C^{d,r}$-norms from above it is
enough to show that for $a \in  a_0+(-2\alpha,2\alpha)^k$ the
functions
$$
  F_{\alpha}(a)= \rho(\frac{a-a_0}{2\alpha})(p_a-q_a) \quad
  \text{and} \quad G_{\alpha}(a)  = \rho(\frac{a-a_0}{2\alpha})(I-\tilde{R}_a)
$$
have $C^d$-norm small when $\alpha$ is small enough. But this is
clear from~\eqref{eq:orden1} and \eqref{eq:orden3}, as having into
account that $\|a-a_0\|\leq \alpha$, it follows that
$$
 F_a(a)=\alpha^{-d}\cdot o_{a\to a_0}(\|a-a_0\|^d)=o_{\alpha\to 0}(1) \quad \text{and}
 \quad G_\alpha(a)=\alpha^{-d}\cdot o_{a\to a_0}(\|a-a_0\|^d)=o_{\alpha \to 0}(1).
$$
This completes the proof.
\end{proof}

\begin{rem} \label{rem-paratangency}
Observe that the positive constant $\alpha_0$ in
the above proposition depends initially on the family $f=(f_a)_a$,
the constants $\epsilon>0$, $\delta>0$ and the parameter $a_0\in \mathbb{I}^k$.
The dependence of $a_0$ comes from the function $o_{a\to
a_0}(\|a-a_0\|^d)$ in~\eqref{eq:o(ad)}. However, one can bound
this function by $\nu(\|a-a_0\|)\cdot \|a-a_0\|^d$ where
$\lim_{t\to 0}\nu(t)=0$, controlling the modulus of continuity
$\nu$ of the derivatives of the unfolding. In this form we can get
that $\alpha_0$ does not depend on the parameter $a_0$. Also,
similarly to what was done in the previous proposition, the surgery
using bump functions around a neighborhood of
the initial paratangency point $Y$ can actually be done around any point
$f^N_{a_0}(Y)$
belonging to $W^s_{loc}(P_{a_0},f_{a_0})$.
This allows us to fix a
priori a uniform $\delta>0$ because we only need to control
the distance between one forward/backward iterate of $f^N_{a_0}(Y)$.
Thus,
$\alpha_0$ also does not depend on
$\delta$. Finally, if
$f$ belongs to $\mathscr{U}$
then one can obtain a uniform bound on the continuity modulus
using the fact that we are dealing with compact families of local
stable and unstable manifolds. This proves that in this construction
$\alpha_0$ only depends on $\epsilon$ and $\mathscr{U}$ (i.e, on
the dynamics of the organizing family $\Phi$).
\end{rem}

\begin{rem} \label{rem:simples}
In the case of codimension $u=1$, the tangency $Y_a$ obtained in
the previous proposition could be assumed \emph{simple} in the
sense of~\cite{GST08}.
\end{rem}

Finally, in the next theorem we will show the
existence of Berger domains as stated in
Definition~\ref{def:Berger-domain}. The idea behind the proof is
the replication of the arguments coming from~\cite[Sec.~6.1]{Ber16}
and~\cite[Sec.~7]{Ber17}.
\begin{thm} \label{thm-berger-domain}
There exists a dense subset $\mathscr{D}$ of $\mathscr{U}$ such
that for any $h=(h_a)_a \in \mathscr{D}$ there is a covering of
$\mathbb{I}^k$ by open sets $J_i$
having a persistent  homoclinic tangency
of codimension $u$. That is, $\mathscr{U}$ is a
$C^{d,r}$-Berger domain (of paratangencies of codimension $u$).
\end{thm}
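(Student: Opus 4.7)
The plan is to show that, given any $f\in\mathscr{U}$ and any $\epsilon>0$, one can produce $h\in\mathscr{U}$ with $\|h-f\|_{C^{d,r}}<\epsilon$ admitting a finite open covering $\{J_i\}_{i=1}^N$ of $\mathbb{I}^k$ on each of which a codimension-$u$ homoclinic tangency of the $C^d$-continuation of a hyperbolic periodic saddle is persistent. The toolkit consists of Proposition~\ref{prop-dense-of-paratangencies}, Remark~\ref{rem-finite-points}, Proposition~\ref{prop-paratangecia} and the uniformity statement of Remark~\ref{rem-paratangency}.

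First I would invoke Remark~\ref{rem-paratangency} to obtain, for the open set $\mathscr{U}$ and a suitably rescaled tolerance, a constant $\alpha_0=\alpha_0(\epsilon,\mathscr{U})>0$ for which Proposition~\ref{prop-paratangecia} applies \emph{uniformly in the parameter} $a_0\in\mathbb{I}^k$. I would then fix $\alpha<\alpha_0$ and choose finitely many points $a_1,\dots,a_N\in\mathbb{I}^k$ so that the open boxes $J_i:=a_i+(-\alpha,\alpha)^k$ cover $\mathbb{I}^k$. Using Remark~\ref{rem-finite-points}, which is the iterated form of Proposition~\ref{prop-dense-of-paratangencies}, I would produce $f'\in\mathscr{U}$ with $\|f'-f\|_{C^{d,r}}<\epsilon/2$ exhibiting a $C^d$-degenerate unfolding at every $a_i$ of a codimension-$u$ homoclinic tangency associated with the continuation of $Q$. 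At this step the transitivity of the blender $\Gamma_a$ and of the homoclinic class of $P_a$ (which lie at the heart of Theorem~\ref{thm-BR20}) allows me to select a distinct homoclinic intersection $z_{a,i}$ for each $a_i$, so that the surgery neighborhoods $U_i\subset\mathcal{M}$ appearing in the proof of Proposition~\ref{prop-dense-of-paratangencies} can be chosen pairwise disjoint.

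Next, I would apply Proposition~\ref{prop-paratangecia} to $f'$ at each $a_i$ with tolerance $\epsilon/(2N)$ and with the uniform box-size $\alpha$; each resulting surgery $H^{(i)}$ is supported in the product $\bigl(a_i+(-2\alpha,2\alpha)^k\bigr)\times V_i$, where $V_i$ is a small neighborhood of a chosen iterate of the tangency point $Y^{(i)}$. By appealing once more to the richness of the blender and to the flexibility afforded by Remark~\ref{rem-paratangency} of performing the surgery around any high iterate $f'^{\,N_i}_{a_i}(Y^{(i)})$ of the tangency, I may take the $V_i$ to be pairwise disjoint in $\mathcal{M}$. The surgeries therefore commute and compose to give a single family $h$ with $\|h-f\|_{C^{d,r}}<\epsilon$ on which each persistent tangency created over $J_i$ survives the other surgeries. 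The resulting covering, together with the $C^d$-families of tangencies associated with the continuations of $Q$, realizes Definition~\ref{def:Berger-domain}. In codimension $u=1$, Remark~\ref{rem:simples} further permits the tangencies on each $J_i$ to be chosen simple in the sense of~\cite{GST08}.

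The main technical obstacle is the non-interference bookkeeping in the previous paragraph: making sure that composing $N$ small perturbations indeed preserves every previously established persistent tangency. It is resolved by the two ingredients already built into the setup, namely (i) the uniformity of $\alpha_0$ from Remark~\ref{rem-paratangency}, which decouples the parameter-box size from the specific $a_i$ being treated, and (ii) the abundance of homoclinic intersections provided by the blender and by the transitive homoclinic class of $P_a$, which allows the manifold-supports of the individual surgeries to be placed on pairwise disjoint open sets. Once these are in hand the theorem follows by the finite inductive composition of surgeries described above.
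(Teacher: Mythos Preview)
Your approach differs from the paper's in how the non-interference of the surgeries is arranged. The paper does \emph{not} try to cover $\mathbb{I}^k$ with $\alpha$-boxes whose $2\alpha$-enlargements overlap. Instead it first notes that a single lattice $\{a_\ell\}$ with pairwise \emph{disjoint} $2\alpha$-boxes cannot cover, and then remedies this by producing $3^k$ distinct hyperbolic periodic points $Q^1,\dots,Q^{3^k}$ (all satisfying~\ref{H1}--\ref{H2}, via a small lemma using the horseshoe in the homoclinic class of $Q_a$), each equipped with its own shifted lattice $\{a^j_\ell\}_\ell$. Within a fixed $j$ the parameter supports $a^j_\ell+(-2\alpha,2\alpha)^k$ are pairwise disjoint; across different $j$ the manifold supports (near the distinct paratangency points $Y_j$) are disjoint. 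The union over all $3^k$ lattices of the $\alpha$-boxes then covers $\mathbb{I}^k$, and Proposition~\ref{prop-paratangecia} is applied independently in $j$.

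Your single-periodic-point strategy has two gaps as written. First, the tolerance $\epsilon/(2N)$ is circular: $N\sim\alpha^{-k}$ is determined by $\alpha$, but Proposition~\ref{prop-paratangecia} only delivers an $\epsilon'$-small surgery when $\alpha<\alpha_0(\epsilon')$, so demanding $\epsilon'=\epsilon/(2N)$ may force $\alpha_0(\epsilon/(2N))<\alpha$. (This one is easily repaired: if the $V_i$ really are pairwise disjoint then the composed perturbation has $C^{d,r}$-norm $\max_i\|H_i-I\|$, not a sum, so $\epsilon/2$ per surgery suffices and no $N$ enters.) Second, and more seriously, disjointness of the $V_i$ alone does \emph{not} ensure that surgery $j$ preserves the persistent tangency created at step $i$. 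That tangency lives on a specific homoclinic orbit of $Q_a$, and you must verify that this orbit---forward along $W^s_{loc}(Q_a)$ and backward along $W^u(Q_a)$---never enters $f'_a{}^{-1}(V_j)$ for every $j\neq i$ and every $a$ in the overlap of $J_i$ with $a_j+(-2\alpha,2\alpha)^k$. Since all your tangencies are homoclinic to the \emph{same} $Q_a$, their orbits all accumulate on $Q_a$ and are intertwined; the avoidance is plausible but needs an explicit argument you have not supplied. The paper's device of associating different lattices with different periodic points $Q^j$ is precisely what makes this bookkeeping clean.
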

\begin{proof}
Fora a fixed family $g=(g_a)_a\in \mathscr{U}$ and $\epsilon>0$,
Propositions~\ref{prop-dense-of-paratangencies},~\ref{prop-paratangecia}
and Remarks~\ref{rem-finite-points},~\ref{rem-paratangency} imply
the following. We obtain $\alpha_0=\alpha_0(\epsilon)>0$ so that for a fixed
$\alpha$ with $0<\alpha<\alpha_0$ there are points $a_1,\dots,a_N$ in
$\mathbb{I}^k$ such that
\begin{itemize}
 \item[-] the open sets $a_i+(-2\alpha,2\alpha)^k$ for $i=1,\dots,N$
 are pairwise disjoint;
 \item[-] the union of $(a_i+(-2\alpha,2\alpha)^k)\cap \mathbb{I}^k$
 for $i=1,\dots,N$ is dense in $\mathbb{I}^k$;
\item[-] there is a $C^{d,r}$-family
 $h=(h_a)_a$, $\epsilon$-close to $g$, having a persistent homoclinic tangency associated with the continuation of $Q_a$
 for all $a\in (a_i+(\alpha,\alpha)^k)\cap \mathbb{I}^k$ and $i=1,\dots,N$.
\end{itemize}
However this result does not provide an open cover of
$\mathbb{I}^k$. We need to perturbe again $h$ without destroying
the persistent homoclinic tangencies associated with $Q_a$ and
at the same time provide new persistent homoclinic tangencies in the complement
of the union of $(a_i+(\alpha,\alpha)^k)\cap \mathbb{I}^k$
for $i=1,\dots,N$.
To do this, we will need a finite set of different
periodic points $Q^j_a$ to replicate the above argument and ensure
that each new perturbation does not modify the previous one (i.e, there
exists a common $\delta>0$ so that the supports of all the peturbations
are disjoint).
\begin{lem}
There exists a set $\{Q^j_a\}_{j=1,\dots, 3^k}$ of $3^k$
hyperbolic periodic points  satisfying the assumptions~\ref{H1} and~\ref{H2}.
\end{lem}
\begin{proof}
Since the
homoclinic class  $H(Q_a,\Phi_a)$ is not trivial (contains $P_a$),
there exists a horseshoe $\Lambda_a$ containing $Q_a$. Thus,
there are infinitely many different hyperbolic periodic points of $\Phi_a$
whose stable manifold intersect transversely the unstable
manifold of $Q_a$. Then by the inclination lemma, and the
robustness of the folding manifold $S_a$, the stable manifold of
these periodic points also contains a folding manifold.
\end{proof}
Associated with each point $Q^j_a$, as in Proposition~\ref{prop-paratangecia},
there is the paratangency point $Y_j$ where the size of the perturbation is governed by
$\delta_j$. Thus, we can obtain a uniform $\delta$ by taking the infimum over $\delta_j$.
Also corresponding to each $Q^j_a$, there exists a lattice of points
$\{a_i^j\}_{i=1,\dots, N}$ in $\mathbb{I}^k$ such that
the union of the $\alpha$-neighborhoods $a_i^j+(\alpha,\alpha)^k$
in $\mathbb{I}^k$ cover $\mathbb{I}^k$.
That is,
$$\mathbb{I}^k\subset \bigcup_{j=1}^{3^k}\bigcup_{i=1}^N a_i^j+(\alpha,\alpha)^k.$$
Then we can apply Proposition~\ref{prop-paratangecia} independently in $j$ to obtain
the family with the required properties, concluding the proof the the theorem.
\end{proof}
\begin{rem}
The persistent homoclinic tangencies in the open set $J_i$
obtained in the above theorem can be associated with a collection
of saddle periodic points, $Q_a^j$ for $j=1,\dots,3^k$, where all
of them have the same type of multipliers. For instance, we can
take these points being sectionally dissipative or of type~$(1,1)$,
$(2,1)$, $(1,2)$ or $(2,2)$ according to the nomenclature
introduced in~\cite{GST08}.
Also, according
to Remark~\ref{rem:simples}, in the codimension one case, the
persistent homoclinic tangency can be assumed \emph{simple} in the
sense of~\cite{GST08}.
\end{rem}

\section{Proof of Theorem~\ref{thm-thmA}:
 periodic sinks and invariant circles} \label{sec:thm-thmA}
In this section we will prove the $C^{d,r}$-Berger phenomenena of
the coexistence of infinitely many normally hyperbolic attracting
invariant circles and also obtain a similar result for hyperbolic periodic sinks.
For short, we will refer to
both of these types of attractors as \emph{periodic attractors}.

The next proposition claims that every family $f=(f_a)_a$ in
$\mathscr{U}$ can be approximated by a family $g=(g_a)_a$ having a
periodic attractor for every parameter $a$ in $\mathbb{I}^k$.
Moreover, the period of the attractors can be chosen arbitrarily
large. To prove this, since $\mathscr{U}$ is a Berger domain, it
is enough to restrict our attention to the dense set $\mathcal{D}$
of $\mathscr{U}$ having persistent tangencies.

\begin{prop} \label{main-lema}
For any $\epsilon>0$, and every $f=(f_a)_a  \in \mathcal{D}$ there
exists $n_0=n_0(\varepsilon,f)\in\mathbb{N}$ such that for any
$n\geq n_0$ there is a $\epsilon$-close family $g=(g_a)_a$ to
$f=(f_a)_a$ in the $C^{d,r}$-topology satisfying that $g_a$ has a
periodic attractor of period $n$  for all $a\in \mathbb{I}^k$.
Moreover, the attractor obtained for $g_a$ is the continuation of
a (hyperbolic or normally hyperbolic) $n$-periodic attractor
obtained for a map $g_{a_0}$, where $a_0$ belongs to a finite
collection of parameters.
\end{prop}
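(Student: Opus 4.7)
The plan is to create, at finitely many reference parameter values, a period-$n$ attractor via a local perturbation near the persistent homoclinic tangency, and to extend it to all of $\mathbb{I}^k$ using (normal) hyperbolicity; the underlying mechanism is the renormalization theory of Gonchenko--Shilnikov--Turaev for simple tangencies of type $(2,1)$ saddles satisfying~\eqref{eq1}.

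Since $f\in\mathcal{D}$, Definition~\ref{def:Berger-domain} supplies a covering of $\mathbb{I}^k$ by open balls $J_i$ on each of which $f$ has a continuation $Q_a^{(i)}$ of a hyperbolic saddle carrying a simple homoclinic tangency $Y_a^{(i)}$, depending $C^d$-continuously on $a\in J_i$. After refining the cover (any smaller ball inherits persistence) and extracting a finite subcover $J_{i_1},\dots,J_{i_L}$, I choose closed sets $K_j\subset J_{i_j}$ with $\mathbb{I}^k=\bigcup_{j=1}^L K_j$ and interior parameters $a_0^{(j)}\in\mathrm{int}(K_j)$. As in the proof of Theorem~\ref{thm-berger-domain}, I may assume that the saddles $Q_a^{(j)}$ attached to the $L$ patches are pairwise distinct periodic orbits of $f$, so that the local phase-space constructions for different $j$ have disjoint supports.

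For a fixed $j$, I would apply the Gonchenko--Shilnikov--Turaev renormalization near the simple tangency of $Q_{a_0^{(j)}}^{(j)}$. Composing $n$ iterations in a linearizing chart near the saddle with the homoclinic transition, and rescaling by a suitable $n$-dependent affine change of coordinates and parameter reparametrization, the first-return map converges to a three-dimensional H\'enon--Hopf limit family depending on an effective parameter $\mu$. Condition~\eqref{eq1}---in particular the rotational character $\lambda_{1,2}=\lambda e^{\pm i\varphi}$ with $\varphi\neq 0,\pi$ together with $|\lambda^{2}\gamma|<1<|\lambda\gamma|$---guarantees that this limit family possesses two open attractor windows in $\mu$: one in which the origin is a hyperbolic sink with complex contracting eigenvalues, and a second, beyond a Neimark--Sacker bifurcation, in which it is surrounded by a normally hyperbolic attracting invariant circle (see~\cite{GST08}). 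Since the tangency persists at every $a\in J_{i_j}$, the effective parameter $\mu$ of the unperturbed family vanishes identically; a small local perturbation of $f$, of the same type as in Propositions~\ref{prop-dense-of-paratangencies} and~\ref{prop-paratangecia} but supported in small phase- and parameter-space neighborhoods of $(Y_{a_0^{(j)}}^{(j)},a_0^{(j)})$, suffices to shift the effective parameter at $a=a_0^{(j)}$ to a chosen value $\mu^{*}$ well inside an attractor window. The $C^{d,r}$-size of the required perturbation shrinks to zero with $n\to\infty$, because the original parameter shift realizing a unit shift in $\mu$ is of order $|\lambda|^{n}|\gamma|^{-n}$. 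The result is a period-$n$ attractor $\mathcal{A}^{(j)}$ of $g_{a_0^{(j)}}$ with uniform hyperbolic or normal-hyperbolic margins.

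The final step is to assemble the $L$ local perturbations and propagate each $\mathcal{A}^{(j)}$ over $K_j$. Disjointness of phase-space supports together with the bound $\epsilon/L$ on each individual piece ensure that the combined perturbation stays $\epsilon$-close to $f$ in $C^{d,r}$. By hyperbolicity (for sinks) or by normal hyperbolicity (for invariant circles), each $\mathcal{A}^{(j)}$ admits a unique continuation $\mathcal{A}^{(j)}_a$ in a full $a$-neighborhood of $a_0^{(j)}$; shrinking the $J_i$ at the refinement step guarantees that this neighborhood contains $K_j$. I expect the main obstacle to be ensuring that the normal hyperbolicity of the invariant circle attractor is preserved \emph{uniformly in $n$}, since the perturbations shrink as $n$ grows: this is precisely the role of condition~\eqref{eq1}, which ensures that inside the renormalized limit the Neimark--Sacker circle is born with an $n$-independent normal contraction rate, so that the associated continuation region in parameter space does not degenerate as $n\to\infty$.
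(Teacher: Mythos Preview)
There is a genuine gap in your continuation step. You create the period-$n$ attractor only at the reference parameter $a_0^{(j)}$ (your perturbation is supported in a small parameter-neighborhood of $a_0^{(j)}$) and then appeal to (normal) hyperbolicity to propagate it over the fixed set $K_j$. The problem is that the $a$-window over which the period-$n$ attractor persists shrinks to zero as $n\to\infty$, so for large $n$ it cannot contain a set $K_j$ chosen in advance. Concretely, in the rescaled first-return map the parameters satisfy $M\sim\gamma_a^{2n}\mu$ and $B\sim(\lambda_a\gamma_a)^n\cos(n\varphi+O_a(1))$; the $O_a(1)$ phase and the eigenvalues $\lambda_a,\gamma_a$ all depend smoothly on $a$, so $\partial_a B$ is of order $(\lambda_a\gamma_a)^n$. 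Hence, even though the attractor windows $A_n^1,A_n^2$ have $n$-independent size in $(M,B)$-space, keeping $(M(a),B(a))$ inside them forces $|a-a_0^{(j)}|=O((\lambda\gamma)^{-n})$. Your closing remark that condition~\eqref{eq1} guarantees an $n$-independent normal contraction rate is correct \emph{in rescaled coordinates}, but it does not translate into an $n$-independent continuation region in the original parameter $a$, precisely because the rescaling itself involves factors $\gamma_a^{2n}$ and $(\lambda_a\gamma_a)^n$.

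The paper resolves this not by continuation from a single parameter but by an $a$-\emph{dependent} perturbation: it proves a parametrized rescaling lemma so that $(M,B)=\Phi_n(a,\varepsilon)$ is a diffeomorphism in $\varepsilon$ uniformly in $a$, and then selects a $C^d$ function $a\mapsto\varepsilon_n^*(a)=(\mu_n^*(a),\varphi_n^*(a))$ with $(M_n(a,\varepsilon_n^*(a)),B_n(a,\varepsilon_n^*(a)))\in A_n^*(a)$ for \emph{every} $a$ in the ball. The perturbed family is $g_{n,a}=g_{a,\varepsilon_n^*(a)}$, so the attractor is manufactured directly at every $a$ rather than propagated. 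The substantive work (and the reason the argument is delicate) is the $C^d$ estimate $\|g_n-g\|_{C^{d,r}}=O(\alpha^{-d}/n)$, which requires controlling $\partial_a^j\mu_n^*(a)$ and $\partial_a^j\varphi_n^*(a)$; this is where the growth of the rescaling factors has to be beaten, and it is carried out in Lemma~\ref{Est_der}. Your proposal bypasses this analysis, but that analysis is exactly what the problem demands.
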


Before proving the above proposition, we will conclude first from
this result the next main theorem of the section, which in particular
proves Theorem~\ref{thm-thmA}.

\begin{thm} For any $m\in \mathbb{N}$, there exists an open and
dense set $\mathcal{O}_m$ in $\mathscr{U}$ such that for any
family $g=(g_a)_a$ in $\mathcal{O}_m$ there exist positive
integers $n_1<\dots<n_m$ so that $g_a$ has a periodic attractor of
period $n_\ell$ for all $a\in \mathbb{I}^k$ and $\ell=1,\dots,m$.
Moreover, there is a residual subset $\mathcal{R}$ of
$\mathscr{U}$ such that  any $g=(g_a)_a \in \mathcal{R}$ satisfies
that $g_a$ has both infinitely many hyperbolic periodic sinks and
infinitely many normally hyperbolic attracting invariant
circles for all $a\in \mathbb{I}^k$.
\end{thm}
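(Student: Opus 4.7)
My plan is to prove the first assertion by induction on $m$ and then to extract the residual set $\mathcal{R}$ as a countable intersection via Baire's theorem. The starting point is the \emph{openness} of having a periodic attractor of prescribed period: if $g_a$ possesses a hyperbolic sink or a normally hyperbolic invariant circle of period $n$ for every $a\in\mathbb{I}^k$, then by the implicit function theorem for sinks, the standard persistence theorem for normally hyperbolic invariant manifolds, and the compactness of $\mathbb{I}^k$, the same holds on a $C^{d,r}$-neighborhood of $g$. Hence the set of families having periodic attractors of $m$ prescribed distinct periods $n_1<\dots<n_m$ at every parameter is $C^{d,r}$-open in $\mathscr{U}$.

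For density, the induction runs as follows. The base case $m=1$ is immediate: given $g\in\mathscr{U}$ and $\varepsilon>0$, approximate $g$ by some $f\in\mathcal{D}$ using density of $\mathcal{D}$, and apply Proposition~\ref{main-lema} to obtain an $\varepsilon$-close family with a periodic attractor of some period $n_1$ at every $a$. For the step, assume $\mathcal{O}_m$ is open and dense. Since $\mathcal{D}$ is dense in $\mathscr{U}$ and $\mathcal{O}_m$ is open, $\mathcal{O}_m\cap\mathcal{D}$ is dense in $\mathscr{U}$; pick $\tilde g\in\mathcal{O}_m\cap\mathcal{D}$ within $\varepsilon/2$ of $g$, and let $n_1<\dots<n_m$ be its attractor periods. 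Choose $\eta>0$ small enough that every family within $\eta$ of $\tilde g$ inherits these $m$ attractors for every $a$. Applying Proposition~\ref{main-lema} to $\tilde g$ with tolerance $\min(\varepsilon/2,\eta)$ and a new period $n_{m+1}$ chosen larger than both $n_m$ and the $n_0$ produced by the proposition yields a family in $\mathcal{O}_{m+1}$ within $\varepsilon$ of $g$, proving density.

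The countable intersection $\bigcap_m\mathcal{O}_m$ is then a residual subset of $\mathscr{U}$ consisting of families with infinitely many periodic attractors at every parameter. To separate the two attractor types claimed by the theorem I would refine the induction by running it with two independent sequences, $\mathcal{O}_m^{\mathrm{sink}}$ and $\mathcal{O}_m^{\mathrm{circ}}$, requiring the $n_\ell$-periodic attractors to be hyperbolic sinks and normally hyperbolic invariant circles respectively; then $\mathcal{R}=\bigcap_m(\mathcal{O}_m^{\mathrm{sink}}\cap\mathcal{O}_m^{\mathrm{circ}})$ gives the theorem. The expected main obstacle is the refinement of Proposition~\ref{main-lema} needed to prescribe the attractor type, since the statement only asserts the existence of \emph{some} periodic attractor. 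Here the multiplier hypothesis~\eqref{eq1} is crucial: the condition $|\lambda^2\gamma|<1<|\lambda\gamma|$ places the rescaled dynamics near the tangency in the Neimark-Sacker regime, where distinct sub-windows of the rescaling parameter realize a hyperbolic sink and a bifurcated normally hyperbolic invariant circle, and selecting the appropriate sub-window in the proof of the proposition provides the required flexibility.
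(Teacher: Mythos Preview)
Your proposal is correct and follows essentially the same route as the paper: induction on $m$ using Proposition~\ref{main-lema} applied to families in $\mathcal{O}_m\cap\mathcal{D}$, with openness coming from (normal) hyperbolicity and the finite-center continuation clause, and the residual set obtained as $\bigcap_m\mathcal{O}_m$. Your explicit separation into $\mathcal{O}_m^{\mathrm{sink}}$ and $\mathcal{O}_m^{\mathrm{circ}}$ is in fact more careful than the paper's own exposition, which leaves this point implicit; the flexibility you invoke is exactly what the paper provides in the proof of Lemma~\ref{lema2}, where the choice $*=1$ versus $*=2$ selects parameters in $A^1_n(a)$ (sink) or $A^2_n(a)$ (invariant circle) near the Bogdanov--Takens and Horozov--Takens points respectively.
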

\begin{proof}  First of all consider
the sequence $\epsilon_i=1/i$ for $i\geq 1$. We will prove the
result by induction. To do this, we are going first to construct
$\mathcal{O}_m$ for $m=1$.

By applying Proposition~\ref{main-lema}, for each $f=(f_a)_a$ in
$\mathcal{D}$ taking a sequences of integers $n(i) \geq
n_0(\epsilon_i,f)$ for all $i\geq 1$, we find a $\epsilon_i$-close
family $g=(g_a)_a$ to $f$ such that $g_a$ has a $n(i)$-periodic
attractor for all $a\in \mathbb{I}^k$. Actually, for any parameter
$a$, the attractor that we obtain for $g_a$ is the continuation of
a (hyperbolic or normally hyperbolic) $n(i)$-periodic attractor
obtained for a map $g_{a_0}$ where $a_0$ belongs to a finite
collection of parameters. Thus, from the hyperbolicity of the
attractor, this property persists under perturbations and then we
have a sequence of open sets $\mathcal{O}_1(\epsilon_i,f)$
converging to $f$ where the same conclusion holds for any family
in these open sets. By taking the union of all these open sets for
any $f$ in $\mathcal{D}$ and $\epsilon_i>0$ for $i\geq 1$, we get
an open and dense set $\mathcal{O}_1$ in $\mathscr{U}$ where for
any $g=(g_a)_a\in \mathcal{O}_1$  there exists a positive integer
$n$ such that $g_a$ has an $n$-periodic attractor  for all
parameters $a\in \mathbb{I}^k$.

Now we will assume that $\mathcal{O}_m$ was constructed and show how to
obtain $\mathcal{O}_{m+1}$. Since $\mathcal{O}_m$ is open
and dense set in $\mathscr{U}$ we can start now by taking
$f=(f_a)_a \in \mathcal{O}_m\cap \mathcal{D}$. Hence, there exist
positive integers $n_1<\dots<n_m$ so that $f_a$ has a persistent
$n_\ell$-periodic attractor (a sink or an invariant circle) for
all $a\in \mathbb{I}^k$ for $\ell=1,\dots,m$. As before,
these attractors are the smooth
continuation of periodic attractors centered at a finite collection of parameters.
Therefore, there exists $\epsilon'=\epsilon'(f)>0$
such that any $\epsilon'$-close family $g=(g_a)_a$ still
has the same properties. Then, for any $\epsilon_i<\epsilon'/2$,
we can apply again Proposition~\ref{main-lema}, taking integers
$n(i)_{m+1} > \max\{n_0(\epsilon_i,f),n_{m}\}$ in order to obtain
an $\epsilon_i$-perturbation $g=(g_a)_a$ of $f$ such that $g_a$ has
also a $n(i)_{m+1}$-periodic attractor  for all $a\in
\mathbb{I}^k$. As before, from the persistence of these
attractors, we have a sequence of open sets
$\mathcal{O}_{m+1}(\epsilon_i,f)\subset \mathcal{O}_m$  converging
to $f$ where the same conclusion holds for any family in these
open sets. By taking the union of all these open sets for any
$f\in \mathcal{O}_m\cap \mathcal{D}$ and
$\epsilon_i<\epsilon'(f)$, we get an open and dense set
$\mathcal{O}_{m+1}$ in $\mathscr{U}$ where for any $g=(g_a)_a\in
\mathcal{O}_{m+1}$  there exist positive integers
 $n_1<\dots<n_{m+1}$  such that  $g_a$ has
a $n_\ell$-periodic attractor  for all $a\in \mathbb{I}^k$ for all
$\ell=1,\dots,m+1$.

To conclude the proof of the theorem observe that if $g=(g_a)_a$ belongs to the
residual set $\mathcal{R}=\cap \mathcal{O}_m$ then $g_a$ has
infinitely many of attractors for all $a\in \mathbb{I}^k$.
\end{proof}

Now we will prove Proposition~\ref{main-lema}. To do this we need
the following lemma.

\begin{lem} \label{lema2}
Given $\alpha>0$, let $g=(g_a)_a$ be a $C^{d,r}$-family and assume
that $g_a$ has a simple homoclinic tangency at a point~$Y_a$
(depending $C^d$-continuously on~$a$) associated with a saddle
$Q_a$ satisfying~\eqref{eq1} for any parameter $a\in
a_0+(-\alpha,\alpha)^k$.  Then there exists a sequence of families
$g_n=(g_{na})_a$ approaching $g$ in the $C^{d,r}$-topology such
that $g_{na}=g_a$ if $a\not\in a_0 + (-2\alpha,2\alpha)^k$ and
$g_{na}$ has an $n$-periodic sink or invariant circle
for all $a \in a_0
+ (-\alpha,\alpha)^k$. Moreover, for $n$ large enough
$$
  \|g_n-g\|_{C^{d,r}}=O\left(\frac{\alpha^{-d}}{n}\right).
$$
\end{lem}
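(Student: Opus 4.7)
The plan is to apply the classical parametric renormalization of Palis--Takens and Gonchenko--Shilnikov--Turaev (which in the multiplier regime~\eqref{eq1} produces the birth of invariant circles, as in~\cite{GST08}) and then to glue the resulting local perturbations by a parameter bump function. Fix a $C^d$-parametrized linearizing chart $(u,w,v)\in\mathbb{C}\times\mathbb{R}^{m-3}\times\mathbb{R}$ in a neighborhood of $Q_a$ in which $g_a$ takes the form $L_a(u,w,v)=(\lambda_a e^{i\varphi_a}u,\,B_a w,\,\gamma_a v)$, with $B_a$ the strong contraction on the $w$-coordinate; such a chart depends $C^d$-continuously on $a$ because~\eqref{eq1} places $(\lambda,\gamma)$ in the non-resonance regime of the parametric Sternberg theorem. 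Let $Y_a^-\in W^u_{\mathrm{loc}}(Q_a)$ and $Y_a^+\in W^s_{\mathrm{loc}}(Q_a)$ be the pre- and post-tangency points of the orbit of $Y_a$, and $T_{0,a}$ the associated transition diffeomorphism; since $Y_a$ depends $C^d$-continuously on $a$, so does $T_{0,a}$.

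The $n$th first-return map near the homoclinic orbit is $T_{n,a}=T_{0,a}\circ L_a^{\,n-n_0}$ for large $n$. Because the tangency is simple, the standard rescaling $\Psi_{n,a}$ (affine of order $\gamma_a^{-n}$ in the unstable direction and $\lambda_a^{-n}$ in the complex stable direction) produces a family $\widetilde{T}_{n,a}=\Psi_{n,a}^{-1}\circ T_{n,a}\circ\Psi_{n,a}$ which converges, as $n\to\infty$, in the $C^{d,r}$-topology and uniformly in $a$, to a limit family of generalized (complex) H\'enon-like maps. The conditions $|\lambda^2\gamma|<1<|\lambda\gamma|$ together with $\lambda_{1,2}=\lambda e^{\pm i\varphi}$, $\varphi\neq 0,\pi$, place this limit family precisely inside the regime analyzed in~\cite{GST08}: as an auxiliary splitting parameter of the tangency is varied, the fixed point of the limit map is either a hyperbolic sink or undergoes a Neimark--Sacker bifurcation producing a normally hyperbolic attracting invariant circle.

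To realize such an attractor simultaneously for every $a\in a_0+(-\alpha,\alpha)^k$, pick a splitting value $\mu_\ast$ lying in the attracting region of the limit family and write it in the original coordinates as a translation of $T_{0,a}$ by a vector of size $O(|\gamma|^{-n})$; this amounts to composing $g_a$ with a localized perturbation supported in a $\delta$-neighborhood of $Y_a^+$, constructed exactly as in Proposition~\ref{prop-paratangecia}, and then multiplied by the parameter bump $\rho((a-a_0)/(2\alpha))$. The resulting family $g_n=(g_{n,a})_a$ coincides with $g$ outside $a_0+(-2\alpha,2\alpha)^k$, while on $a_0+(-\alpha,\alpha)^k$ the bump equals $1$ and the splitting is exactly $\mu_\ast$, so that $\widetilde{T}_{n,a}$ has the limit attractor. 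By normal hyperbolicity (of the sink, respectively of the invariant circle), this attractor persists under the $a$-variation and pulls back through $\Psi_{n,a}$ to a $C^d$-continuous family of $n$-periodic attractors of $g_{n,a}$. The $a$-derivatives up to order $d$ of $\rho((a-a_0)/(2\alpha))$ contribute a factor $\alpha^{-d}$ while the phase-space bump is $a$-independently bounded (cf.\ Remark~\ref{rem-paratangency}), so the total perturbation has $C^{d,r}$-norm $O(\alpha^{-d}|\gamma|^{-n})$, which is $O(\alpha^{-d}/n)$ for $n$ sufficiently large since $|\gamma|>1$.

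The main obstacle is the parametric part of the renormalization: one needs the rescaling $\Psi_{n,a}$ and the limit map to depend $C^d$-continuously on $a$ with estimates uniform over $a_0+(-\alpha,\alpha)^k$, so that the existence and normal hyperbolicity of the attractor of $\widetilde{T}_{n,a}$ transfer back to $T_{n,a}$ simultaneously over the whole cube. This reduces to (i) the $C^d$-parametric Sternberg linearization near $Q_a$, whose non-resonance conditions are an open property implied by~\eqref{eq1} throughout the cube, and (ii) the simplicity of the tangency, which ensures that the quadratic normal form of $T_{0,a}$ varies $C^d$-smoothly in $a$.
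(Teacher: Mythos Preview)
The proposal has a genuine gap: you unfold with a single parameter (the splitting $\mu$), whereas the paper's argument requires a \emph{two}-parameter unfolding $\varepsilon=(\mu,\varphi)$, the second coordinate being a perturbation of the argument of the complex stable eigenvalue of $Q_a$. The reason $\varphi$ is indispensable is the rescaled coordinate $B$. In the GST08 renormalization (Lemma~\ref{para-lema-GHM}) the first-return map reduces to the generalized H\'enon form with
\[
M\sim\gamma_a^{2n}\bigl(\mu+O(\gamma_a^{-n}+\lambda_a^n)\bigr),\qquad
B\sim(\lambda_a\gamma_a)^{n}\cos\bigl(n\varphi+O(1)\bigr).
\]
Since $|\lambda_a\gamma_a|>1$ by~\eqref{eq1}, one has $|B|\to\infty$ unless the cosine is of order $(\lambda_a\gamma_a)^{-n}$, i.e.\ unless $n\varphi_a$ is tuned to within an exponentially small window of $\pi/2\pmod\pi$. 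For a fixed $a$ one could pass to a subsequence of $n$'s, but over the whole cube $a_0+(-\alpha,\alpha)^k$ the argument $\varphi_a$ varies and no single $n$ makes $B$ bounded for all $a$ simultaneously. The attracting regions $A^1_n$, $A^2_n$ used to produce the sink and the invariant circle lie near the Bogdanov--Takens and Horozov--Takens points, both at $B\approx 1$; thus your instruction ``pick $\mu_\ast$ in the attracting region of the limit family'' cannot be carried out persistently in $a$, and in fact there is no finite limit family because $B$ diverges. The paper fixes this by perturbing $\varphi$ by an amount $\varphi^*_n(a)=O(1/n)$ so that $(M,B)$ lands in $A^*_n(a)$ for every $a$ (Lemma~\ref{Delta_expand}).

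This omission also explains why your size estimate $O(\alpha^{-d}|\gamma|^{-n})$ is too optimistic. It is the $\varphi$-perturbation, of order $1/n$, and not the exponentially small $\mu$-shift, that dominates and yields the bound $O(\alpha^{-d}/n)$. The $C^d$-control of the $a$-dependent choices $\mu^*_n(a)$ and $\varphi^*_n(a)$ is in fact the main technical work of the paper's proof (Lemma~\ref{Est_der}); the derivatives $\partial_a^j\varphi^*_n(a)$ are not read off from a static renormalization picture but require inverting the map $(\mu,\varphi)\mapsto(M,B)$ and tracking the $a$-derivatives through~\eqref{eq:MB}. Finally, your appeal to parametric Sternberg linearization is unsafe: condition~\eqref{eq1} does not preclude resonances among $\lambda e^{\pm i\varphi}$, $\gamma$ and the strong-stable multipliers, and the paper accordingly uses only a straightening of the local invariant manifolds together with the GST08 first-return normal form, not a full linearization.
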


Before proving this result, let us show how to get
Proposition~\ref{main-lema} from the above lemma.

\begin{proof}[Proof of Proposition~\ref{main-lema}]
 Given $f=(f_a)_a$ in $\mathcal{D}$,
relabeling and resizing if necessary, we can assume that the cover
of $\mathbb{I}^k$ by the open balls $J_i$ that appears in
Definition~\ref{def:Berger-domain}   is of the form
$$
 \mathbb{I}^k  \subset \bigcup_{j=1}^M \bigcup_{\ell=1}^{N_j}
 J_{j\ell} \quad \text{with} \quad
  J_{j\ell}=a_{j\ell}+(-\alpha_{j\ell},\alpha_{j\ell})^k
  \quad a_{j\ell}\in \mathbb{I}^k \ \ \text{and} \ \ \alpha_{j\ell}>0.
$$
Moreover, the persistent homoclinic tangency $Y_a$ of $f_a$ on
$J_{j\ell}\cap \mathbb{I}^k$ is simple in the sense of~\cite{GST08} and is associated with a saddle $Q_{ja}$
for $j=1,\dots,M$, where for each $j$, the sets
$\overline{J_{j\ell}}$ for $\ell=1,\dots,N_j$ are pairwise
disjoint. To avoid unnecessary complications in the
notation, we can assume that $\alpha_{j\ell}=\alpha$ for all
$j=1,\dots,M$ and $\ell=1,\dots N_j$. Moreover, for each $j$,
the intervals $2J_{j\ell}=a_{j\ell}+(-2\alpha,2\alpha)^k$ are pairwise disjoint with respect to $\ell$.

On the other hand, given $\epsilon>0$, according to
Lemma~\ref{lema2}, we can control the approximation by a function
$F(\alpha,n)$ of order $O(\alpha^{-d}n^{-1})$. We take
$n_0=n_0(\epsilon,f)\in \mathbb{N}$ where
$F(\alpha,n_0)=O(\alpha^{-d}n_0^{-1})<\epsilon$. Now, consider an
integer $n\geq n_0$. We want to find an $\epsilon$-close family
$g=(g_a)_a$ having an $n$-periodic attractor at any parameter $a
\in \mathbb{I}^k$. Having into account that for each $j$ the
intervals $2J_{j\ell}$ are pairwise disjoint,
we can apply Lemma~\ref{lema2} inductively to obtain an
$\epsilon$-close family $g=(g_{a})_a$ to $f$ such that $g_a$ has
an $n$-periodic attractor  for all $a\in J_{j\ell}$. This
concludes the proof.
\end{proof}

Finally to complete the proof we will show Lemma~\ref{lema2}.
However, in order to understand better how periodic sinks
and invariant circles appear in the unfolding of homoclinic
tangencies associated with saddles of the form~\eqref{eq1}, we need
some preliminaries on the theory of rescaling lemmas
from~\cite{GST08}.

\subsection{Rescaling lemma: Generalized Henon map} \label{sec:rescaling-lemma}
Let $f$ be a $C^r$-diffeomorphism of a manifold of dimension
$m\geq 3$ with a homoclinic tangency
  associated with a hyperbolic
periodic point $Q$ with multipliers satisfying the assumptions~\eqref{eq1}. We
assume that the tangency is \emph{simple} in the sense
of~\cite[sec.~1, pg. 928]{GST08}. That is, the tangency is
quadratic, of codimension one and, in the case that the dimension
$m>3$, any extended unstable manifold is transverse to the leaf of
the strong stable foliation which passes through the tangency
point. We need to consider a two-parameter unfolding
$f_\varepsilon$ of $f=f_0$ where
$\varepsilon=(\mu,\varphi-\varphi_0)$ being $\mu$ the parameter
that controls the splitting of the tangency and $\varphi$ the
parameter related to the eigenvalues of $Q$ (here $\varphi_0$ is the value
of $\varphi$ at $\varepsilon=0$). Let $T_0=T_0(\varepsilon)$
denote the local map and in this case this map
corresponds to $f^q_{\varepsilon}$ defined in a neighborhood $W$ of $Q$, where $q$ is the period of $Q$.
By
$T_1=T_1(\varepsilon)$ we denote the map $f_\varepsilon^{n_0}$
defined from a neighborhood $\Pi^-$ of a tangent point $Y^-\in
W^u_{loc}(Q,f_0)\cap W$ of $f_0$ to a neighborhood $\Pi^+$ of
$Y^+=f_0^{n_0}(Y^-)\in W^s_{loc}(Q,f_0)\cap W$. Then, for $n$
large enough, one defines the first return map $T_n=T_1\circ
T_0^n$ on a subset $\sigma_n=T_0^{-n}(\Pi^-)\cap \Pi^+$ of $\Pi^+$
where $\sigma_n\to W^s_{loc}(Q)$ as $n\to\infty$. According
to~\cite[Lemma~1 and~3]{GST08} we have the following result:

\begin{lem}\label{lema-GHM} There exists a sequence of open sets $\Delta_n$ of parameters
converging to $\varepsilon=0$ such that for this values the
first-return $T_n$ has a two-dimensional attracting invariant
$C^r$-manifold $\mathcal{M}_n \subset \sigma_n$
so that after a $C^r$-smooth transformation of
coordinates, the restriction of the map is given by the
Generalized H\'enon map
\begin{equation} \label{eq-GHM}
   \bar{x}=y, \qquad \bar{y}=M-Bx-y^2-R_n(xy+ o(1)).
\end{equation}
 The rescaled parameters $M$, $B$ and $R_n$ are functions of
$\varepsilon \in \Delta_n$ such that $R_n$ converges to zero as
$n\to \infty$ and $M$ and $B$ run over asymptotically large
regions which, as $n\to \infty$, cover all finite values. Namely,
\begin{align*}
M \sim \gamma^{2n}(\mu + O(\gamma^{-n}+\lambda^n)), \quad B \sim
(\lambda \gamma)^{n}\cos(n\varphi+O(1)) \quad \text{and} \quad R_n
= \frac{2J_1}{B}(\lambda^2\gamma)^n
\end{align*}
where $J_1\not=0$ is the Jacobian of the global map $T_1$
calculated at the homoclinic point $Y^-$ for $\varepsilon=0$. The
$o(1)$-terms tend to zero as $n\to \infty$ along with all the
derivatives up to order $r$ with respect to the coordinates
and up to order $r-2$ with respect to the rescaled parameters
$M$ and $B$. Moreover, the limit family is the Henon map.
\end{lem}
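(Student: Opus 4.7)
The plan is to derive the Hénon-like normal form in three stages: a normal-form reduction of the local map $T_0$ near $Q$, a Taylor expansion of the global map $T_1$ at the simple tangency, and a coordinate-plus-parameter rescaling that absorbs the exponential $n$-dependence. Since the multipliers satisfy $|\lambda_j|<|\lambda|<1<|\gamma|$ with $\lambda_{1,2}=\lambda e^{\pm i\varphi}$, there is a dominant spectral gap between the strong-stable coordinates $\xi$ (eigenvalues $\lambda_j$, $j\geq 3$) and the three-dimensional center-unstable bundle spanned by the complex pair and the real unstable eigenvector. Using a Sternberg/Bonatti-type $C^r$-normal form in the central-unstable directions together with the invariant strong-stable foliation, one writes $T_0$ in coordinates $(z,w,\xi)\in\mathbb{C}\times\mathbb{R}\times\mathbb{R}^{m-3}$ as
\[
T_0(z,w,\xi) = \bigl(\lambda e^{i\varphi}z + p(z,w) + \ldots,\ \gamma w + q(z,w) + \ldots,\ A(z,w,\xi)\xi\bigr)
\]
with $p,q$ resonant polynomials. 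Iterating $n$ times, the restriction of $T_0^n$ to the three-dimensional center-unstable manifold is, up to terms of order $(|\lambda_j|/|\lambda|)^n$ arising from the normally hyperbolic contraction of $\xi$, asymptotic to $(z,w)\mapsto(\lambda^n e^{in\varphi}z,\gamma^n w)$.

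Next, the simpleness of the tangency (quadratic, codimension one, transverse to the strong-stable leaf through $Y^-$ when $m>3$) together with the two-parameter unfolding $\varepsilon=(\mu,\varphi-\varphi_0)$ permits a Taylor expansion of $T_1(\varepsilon)$ around $(Y^-,0)$. In the coordinates above, the $w$-component of $T_1$ takes the canonical form
\[
w \ \mapsto \ \mu + c_1 z + c_2 \bar z + c_3 \xi + (w-w^-)^2 + \ldots,
\]
with no linear $(w-w^-)$ term (the tangency condition) and a nonzero quadratic coefficient; the Jacobian $J_1$ at the tangency governs the leading cross-term. Composing $T_n=T_1\circ T_0^n$ on $\sigma_n=T_0^{-n}(\Pi^-)\cap\Pi^+$ and introducing rescaled coordinates of order $\gamma^{-2n}$ in the central-unstable directions, together with rescaled parameters $M\sim\gamma^{2n}(\mu+O(\gamma^{-n}+\lambda^n))$ and $B\sim(\lambda\gamma)^n\cos(n\varphi+O(1))$, the quadratic term normalizes to $-y^2$, the coefficient of the linear $x$-term becomes $-B$ (the factor $\cos(n\varphi+O(1))$ arising from the real part of $\lambda^n e^{in\varphi}$ composed with the linear part of $T_1$), and a direct computation identifies the cross-term coefficient as $R_n=(2J_1/B)(\lambda^2\gamma)^n$, which tends to zero since $|\lambda^2\gamma|<1$. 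The open sets $\Delta_n$ in $\varepsilon$-space are defined as the pre-image under this rescaling of a fixed large ball in $(M,B)$-space, and shrink around $\varepsilon=0$ while the rescaled parameters cover any pre-assigned bounded region. The residual $o(1)$ corrections vanish as $n\to\infty$ in $C^r$ in coordinates; because the parameter rescaling $M\sim\gamma^{2n}\mu$ costs two derivatives in $\mu$, the convergence is only $C^{r-2}$ in the rescaled parameters.

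Finally, the attracting two-dimensional invariant $C^r$-manifold $\mathcal{M}_n\subset\sigma_n$ is produced by Fenichel's normally hyperbolic invariant manifold theorem applied to $T_n$: the strong-stable directions contract at rate $(|\lambda_j|/|\lambda|)^n$ relative to the rescaled central dynamics, providing uniform spectral dominance, so the persistence theorem yields a $C^r$-smooth invariant graph over the central two-dimensional direction on which the rescaled return map acts as the Hénon-like formula, converging to the Hénon map $\bar x=y$, $\bar y=M-Bx-y^2$ as $n\to\infty$. The hardest step is the first: producing a $C^r$-normal form of $T_0$ sharp enough that $T_0^n$ can be controlled with errors uniformly $o(\lambda^n)$ together with its parameter derivatives. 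Nonresonance for generic $\varphi$ eliminates most resonant monomials in $p,q$, but the finitely many surviving resonant terms must be tracked explicitly through the rescaling to verify they are absorbed into the $R_n\cdot o(1)$ term rather than producing new leading-order contributions.
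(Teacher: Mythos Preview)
The paper does not supply its own proof of this lemma: it is quoted verbatim as a consequence of \cite[Lemma~1 and~3]{GST08}, and all the work (normal form for $T_0$, Taylor expansion of $T_1$, rescaling, reduction to the invariant two-dimensional manifold) is delegated to that reference. Your sketch is essentially an outline of the argument carried out in~\cite{GST08}, so in that sense it matches the ``proof'' the paper relies on.

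One point worth flagging: you invoke a ``Sternberg/Bonatti-type $C^r$-normal form'' for $T_0$. A full Sternberg linearization would require nonresonance conditions among $\lambda,\gamma,\lambda_j$ that are not assumed here. What \cite{GST08} actually uses is a weaker \emph{first-order} (or ``main-part'') normal form due to Gonchenko--Shilnikov, which straightens the local stable/unstable manifolds and flattens certain cross-terms without demanding nonresonance; this is precisely what makes the iterates $T_0^n$ controllable with errors $O(\hat\lambda^n)$ for some $|\lambda|<\hat\lambda<1$, uniformly in $C^r$. Your last paragraph correctly identifies this as the delicate step, but the resolution is not ``nonresonance for generic $\varphi$'' --- it is the use of this weaker normal form, which always exists in the $C^r$ category. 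Apart from this, your three-stage decomposition (local normal form, global Taylor expansion at the simple tangency, rescaling plus Fenichel reduction) is exactly the architecture of the proof in~\cite{GST08}.
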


The dynamics of the following generalized H\'enon map
\begin{equation} \label{eq-GHM-simplified}
   \bar{x}=y, \qquad \bar{y}=M-Bx-y^2-R_nxy
\end{equation}
was studied in~\cite{GG00,GG04,GKM05} (see also \cite{GGT07}). We
present here the main results for the case of small $R_n$ with
emphasis on the stable dynamics (stable periodic orbits and
invariant circles) in order to apply the corresponding results to
the first return maps coming from \eqref{eq-GHM}. Observe that the difference between equations \eqref{eq-GHM-simplified} (generalized H\'enon map) and \eqref{eq-GHM} (perturbed map) has order $O(R_n)$. Then the existence of stable periodic orbits and
invariant circles for \eqref{eq-GHM} can be inferred from the bifurcation diagram of \eqref{eq-GHM-simplified}. In Figure~\ref{fig-MB} we show the
bifurcation curves for the generalized H\'enon map in \eqref{eq-GHM-simplified}
in the parameter space $(M,B)$.

\begin{figure}\vspace{0.3cm}
\labellist \small\hair 2pt \pinlabel $L^+_n$ at 100 470 \pinlabel
$L^-_n$ at 405 770 \pinlabel $B$ at 490 530 \pinlabel $M$ at 60
790 \pinlabel $\mathrm{HT}_n$ at 180 686 \pinlabel $\mathrm{BT}_n$
at 255 440 \pinlabel $L^\varphi_n$ at 210 570 \pinlabel $L_n$ at
340 570
\endlabellist
\includegraphics[scale=0.6]{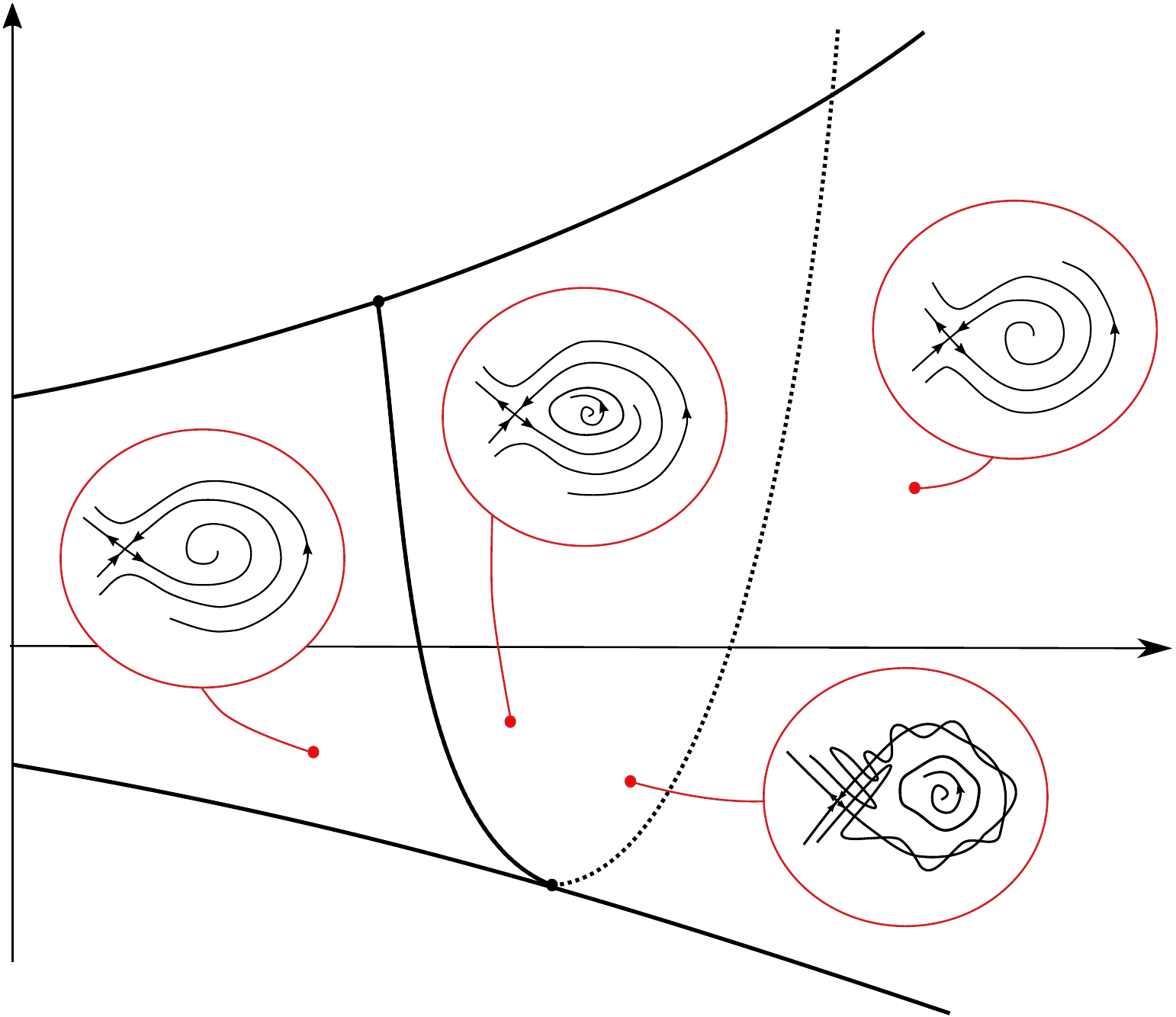}
\caption{Bifurcation curves for the generalized H\'enon
map~\eqref{eq-GHM-simplified} with $R_n > 0$.
   The case $R_n=0$ corresponds with the bifurcation diagram of the H\'enon map.
   In that case, $L_n^\varphi$ collapses with $L_n$ at $B=1$.
   The diagram for the case $R_n<0$ is similar, changing the position of the curves
   $L_n^\varphi$ and $L_n$ and the stability of the invariant circle.      }
\label{fig-MB}
\end{figure}

The map~\eqref{eq-GHM-simplified} has in the parameter plane $(M,B)$
the following three bifurcation curves
\begin{align*}
L^+_n \ &: \quad  M = -\frac{(1 +B)^2}{4(1 + R_n)} \\
L^-_n \ &: \quad  M = \frac{1}{4}(1 +B)^2(3 + R_n) \\
L^\varphi_n  \ &: \quad M=\frac{\cos^2 \omega- \cos \omega(2 +
R_n)}{(1+R_n/2)^2}, \quad B = 1+ \frac{R_n\cos \omega}{1 + R_n/2}.
\end{align*}
These correspond to the existence of fixed points having multipliers
on the unit circle: $+1$ at $(M,B) \in L^+_n$; $-1$ at $(M,B)\in
L^-_n$; and $e^{\pm i\omega}$ at $(M,B)\in L^\varphi_n$. Note that
the curve $L^\varphi_n$ is written in a parametric form such that
the  argument $\omega$ ($0 < \omega < \pi$) of the complex
multipliers is the parameter.
We also point out here the points
\begin{equation} \label{eq:pontos}
\begin{aligned}
\mathrm{BT}_n \ &: \quad  M = \frac{-1 -R_n}{(1 + R_n/2)^2},  \quad B=1+\frac{R_n}{1+R_n/2} \\
\mathrm{HT}_n \ &: \quad  M = \frac{3 +R_n}{(1 + R_n/2)^2}, \quad
\ \ \ B=\frac{1-R_n/2}{1+R_n/2}.
\end{aligned}
\end{equation}
The are called as follows: \emph{Bogdanov-Takens point} for
$\mathrm{BT}_n$ and the \emph{Horozov-Takens point} for
$\mathrm{HT}_n$. Also denoted by $L_n$ is an interesting curve
(nonbifurcational) starting at the point $\mathrm{BT}_n$, which
corresponds to the existence of a saddle fixed point
of~\eqref{eq-GHM-simplified} of neutral type (i.e., the fixed
point has positive multipliers whose product is equal to one).
This curve is drawn in Figure~\ref{fig-MB} as the dotted line and
its equation is given by the same expression of
\mbox{$L^\varphi_n$ replacing $\cos \omega$ by $\alpha>1$.}

\begin{figure}
\labellist \small\hair 2pt \pinlabel $K$ at 414 638 \pinlabel $H$
at 310 690 \pinlabel $L_n^-$ at 120 590 \pinlabel $L^\varphi_n$ at
230 445 \pinlabel {\footnotesize$\mathrm{HT}_n$} at 262 575
\endlabellist
  \includegraphics[scale=0.6]{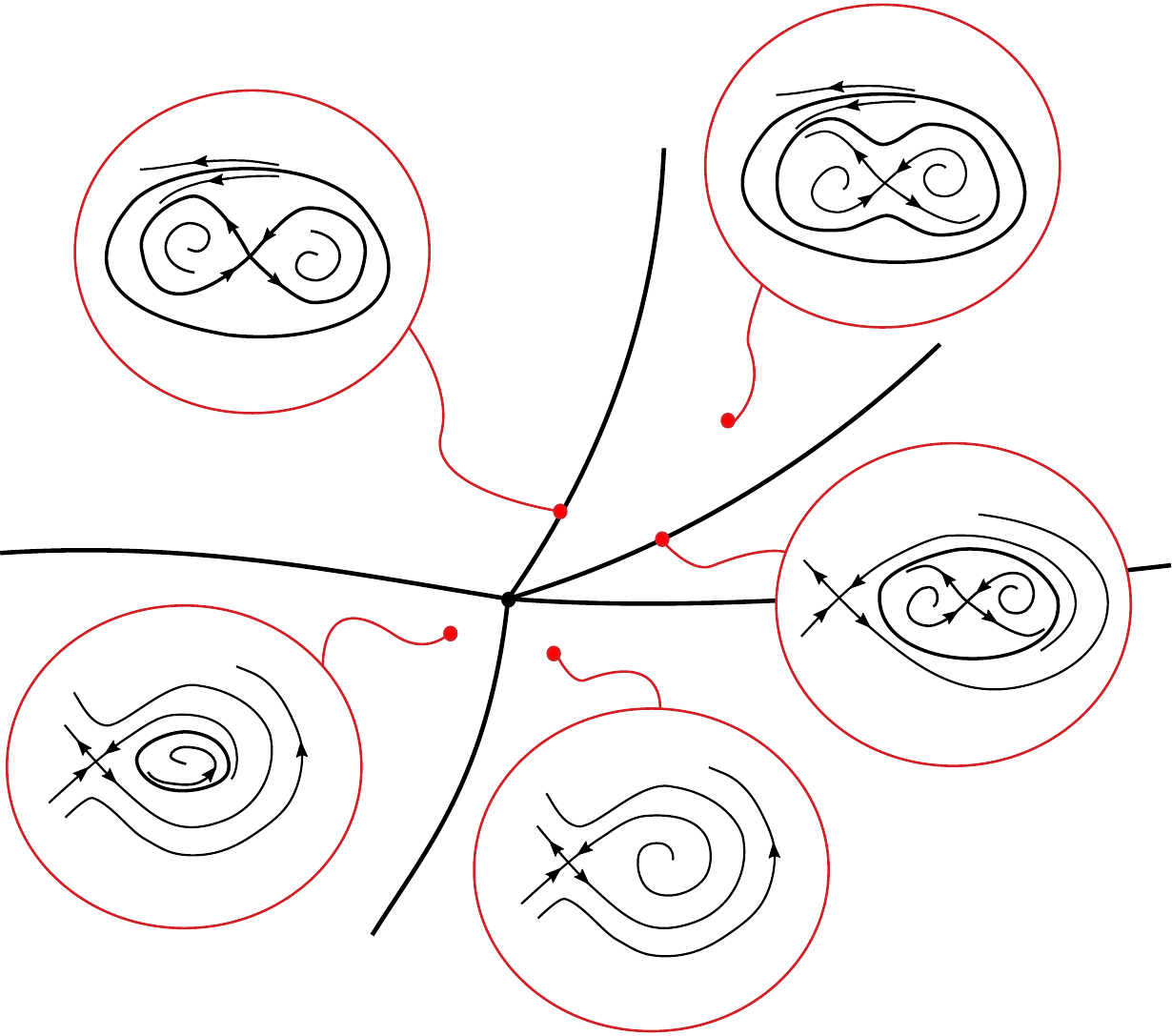}
\caption{Bifurcation diagrams near a Horozov-Takens point for
$R_n<0$. For $R_n=0$ (H\'enon map) the curves $K$ and $H$ collapse
at $B=1$. The diagram for the  case $R_n>0$ is similar but changes
the position of $K$, $H$ and the stability of both invariant
curves. The open domain between $K$ and $H$ is  rather small,
having  the size of a finite order along the $M$-direction and
order $O(R_n)$ along the $B$-direction.} \label{fig-Bminus}
\end{figure}

There is an open domain $D^s_n$ , bounded by the curves $L^+_n$,
$L^-_n$ $L^\varphi_n$ with vertices
$\mathrm{BT}_n$, $\mathrm{HT}_n$ (see
Figure~\ref{fig-MB}), such that map~\eqref{eq-GHM-simplified} has
a stable fixed point for parameters in $D^s_n$.
The bifurcations of periodic points
with multipliers $e^{\pm i\omega}$
can lead
to asymptotically stable or/and unstable invariant circles.
The first return map $T_n$ has an
invariant circle which is either stable if $R_n>0$ or unstable if
$R_n<0$. Observe that the sign of $R_n$ actually only depends on
$J_1\not=0$. Thus, if $J_1>0$ we obtain parameter values where we
have a stable close invariant curve. In the case $J_1<0$, the
existence of a stable closed invariant curve
in~\eqref{eq-GHM-simplified} follows from the bifurcation analysis
of the Horozov-Takens point $\mathrm{HT}_n$. Some
of the elements that appear in this non-degenerate bifurcation are
showen in Figure~\ref{fig-Bminus}. Actually, when $J_1\not=0$ (i.e,
$R_n\not =0$), near the point $\mathrm{HT}_n$
there are open domains parameter values where stable and
unstable closed invariant curves coexist.

Moreover, for any map that
is $O(R_n)$-close to to~\eqref{eq-GHM-simplified} in the
$C^3$-topology the corresponding bifurcations still remain
non-degenerate and preserve the same stability. Thus, we can
obtain the same type of results for the pertubed map~\eqref{eq-GHM}.
To summarize for future reference,  for $n$ large
enough we find open sets $A^1_n$, $A^2_n$ of $(M,B)$-parameters accumulating at
$\mathrm{BT}=(-1,1)$ and $\mathrm{HT}=(3,1)$ respectively as $n\to
\infty$ such that the following holds. If
$(M,B)\in A^1_n$ (resp.~$(M,B)\in A^2_n$) then $T_n$ has a
hyperbolic attracting periodic point (resp.~a normally hyperbolic
attracting invariant circle).

%
%

\subsection{Proof of Lemma~\ref{lema2}}
By assumption the map $g_a$  has a homoclinic tangency at a point
$Y_a$ associated with a sectional dissipative periodic point $Q_a$
for all $a\in a_0+(-\alpha,\alpha)^k$. Actually, the tangency must
be smoothly continued until $\|a-a_0\|_{\infty}=\alpha$. We
consider a two-parameter unfolding $g_{a,\varepsilon}$ of the
homoclinic tangency $Y_a$ of $g_a$ for $a\in a_0+
[-\alpha,\alpha]^k$,
where $\varepsilon=(\mu,\varphi)$.
Here $\mu$ is the parameter that controls the splitting of the
tangency and $\varphi$ is the perturbation of the argument of the complex
eigenvalues of $Q_a$.
We can take local coordinates $(x,u,y)$  with $x\in \mathbb{C}$,
$u\in\mathbb{R}^{m-3}$ and $y\in \mathbb{R}$ in a neighborhood of
$Q_a$ which corresponds to the origin, such that $W^s_{loc}(Q_a)$
and  $W^u_{loc}(Q_a)$ acquire the form $\{y=0\}$ and $\{x=0,u=0\}$
respectively. Moreover, the complex eigenvalues related to the stable
manifold of $Q_a$ correspond to the $x$-variable and
the tangency point $Y_a$
is represented by $(x^+,u^+,0)$.

Let us consider a $C^\infty$-bump
function $ \phi:\mathbb{R}\to \mathbb{R}$ with support in $[-1,1]$
and equal to 1 on $[-1/2,1/2]$. Let $$\rho: a=(a_1,\dots,a_k) \in
\mathbb{I}^k \mapsto \phi(a_1)\cdot\ldots\cdot \phi(a_k) \in
\mathbb{R}.$$ Take $\delta>0$ so that the $\delta$-neighborhoods
in local coordinates of $Q_a$ and $g_a^{-1}(Y_a)$ are disjoint.
Observe that
these two neighborhoods, call $U$ and $V$ respectively, can be taken
independent of $a$.  We can write
$$g_{a,\varepsilon}= H_{a,\varepsilon}
\circ g_a,$$ where $H_{a,{\varepsilon}}$  in these local coordinates
takes the form
\begin{align*}
   \bar{x}&=\left(1-\rho\left(\frac{a-a_0}{2\alpha}\right)\phi\left(\frac{\|(x,u,y)\|}{2\delta}\right)
   (1-e^{i\varphi})\right) x
   \\
   \bar{u}&=u \\
   \bar{y}&=y+\rho\left(\frac{a-a_0}{2\alpha}\right)\phi\left(\frac{\|(x,u,y)-(x^+,u^+,0)\|}{2\delta}\right)\mu,
\end{align*}
and is the identity otherwise.
Observe that if $a\not\in a_0 + (-2\alpha,2\alpha)^k$ then
$g_{a,\varepsilon}=g_a$ and if $(x,u,y)\not \in U\cup V$ then
$g_{a,\varepsilon}=g_a$.

Recall in Section~\ref{sec:rescaling-lemma} the definition of
the first return map associated with the unfolding of a simple
homoclinic tangency. Since the tangency point $Y_a$ depends
$C^d$-continuously on $a_0+[-\alpha,\alpha]^k$, we may assume that the
first-return map $T_n=T_n(a,\varepsilon)$ also depends smoothly as
a function of the parameter $a$ on $a_0+[-\alpha,\alpha]^k$.

\begin{lem}[Parametrized rescaling lemma]\label{para-lema-GHM}
There exist
families of open sets $(\Delta_n(a))_a$ of parameters converging to
$\varepsilon=0$ as $n\to\infty$ such that for any $\varepsilon \in
\Delta_n(a)$ the map $T_n=T_n(a,\varepsilon)$ has an attracting
$C^r$-manifold $\mathcal{M}_n=\mathcal{M}_n(a,\varepsilon)$ for any $a\in a_0+[-\alpha,\alpha]^k$.
Moreover, there exists a $C^{d,r}$-family of transformations of coordinates which bring the first-return map $T_n$ restricted to $\mathcal{M}_n$ to the form
given by~\eqref{eq-GHM}
\begin{equation} \label{eq-GHM2}
   \bar{x}=y, \qquad \bar{y}=M-Bx-y^2-R_n(xy+ o(1)).
\end{equation}
Here the rescaled parameters $M=M_n(a,\varepsilon)$ and $B=B_n(a,\varepsilon)$ are at least $C^d$-smooth
functions (recall that $d\leq r-2$) on
$$
\Delta_n=\left\{(a,\varepsilon):  a\in a_0+(-\alpha,\alpha)^k \
\text{and} \ \varepsilon \in \Delta_n(a)\right\}.
$$
The same
property holds for the coefficient $R_n=R_n(a,\varepsilon)$ and the $o(1)$-terms. More
specifically,
\begin{align}  \label{eq:MB}
M \sim \gamma^{2n}_a(\mu + O(\gamma_a^{-n}+\lambda_a^n)), \quad
B \sim (\lambda_a
\gamma_a)^{n}\cos(n\varphi+O(1))
 \quad \text{and}
\quad R_n = \frac{2J_{1a}}{B}(\lambda^2_a\gamma_a)^n
\end{align}
where $\lambda_a=\lambda_a(g)$ and $\gamma_a=\gamma_a(g)$ are the
eigenvalues of $Q_a=Q_a(g)$  satisfying~\eqref{eq1} for $g_a$.
\end{lem}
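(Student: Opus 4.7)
The plan is to run the classical rescaling construction of Lemma~\ref{lema-GHM} (i.e., Lemmas~1 and~3 of~\cite{GST08}) $a$-parameter by $a$-parameter, and then track the dependence on $a$ through each step, showing that the resulting objects inherit the $C^{d,r}$-regularity from the family $g = (g_a)_a$. The key observation is that every ingredient of the rescaling procedure (the continuation $Q_a$ of the saddle, its multipliers $\lambda_a, \gamma_a$, the homoclinic points $Y_a^{\pm}$, the local maps $T_0(a,\varepsilon) = f^q_{a,\varepsilon}$ and the global map $T_1(a,\varepsilon) = f^{n_0}_{a,\varepsilon}$) depends $C^d$-continuously on $a \in a_0 + [-\alpha,\alpha]^k$ and $C^r$-smoothly on the spatial variables, and hence so do all the derived quantities.

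First I would choose a family of local coordinates near $Q_a$ that $C^d$-continuously straightens the local stable and unstable manifolds and brings the local map $T_0(a,\varepsilon)$ into the Shilnikov normal form used in~\cite{GST08}; such a family exists because stable and unstable manifolds of $Q_a$ depend $C^d$-continuously on $a$ in the $C^r$-topology. In these coordinates the global map $T_1(a,\varepsilon)$, defined on a neighborhood $\Pi^-$ of $Y_a^-$, is a single $C^{d,r}$-branch of $g^{n_0}_{a,\varepsilon}$ and has Jacobian $J_{1a}$ at $Y_a^-$ which is $C^d$ in $a$ and nonzero. Next I apply the pointwise GST08 construction to obtain, for each $a \in a_0+(-\alpha,\alpha)^k$, open sets $\Delta_n(a)$ of $\varepsilon$-parameters and an invariant attracting $C^r$-manifold $\mathcal{M}_n(a,\varepsilon) \subset \sigma_n$ on which a rescaling of coordinates brings $T_n = T_1 \circ T_0^n$ into the form~\eqref{eq-GHM2}. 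The invariant manifold $\mathcal{M}_n(a,\varepsilon)$ is normally hyperbolic and thus, by the parametrized invariant-manifold theorem, varies $C^d$-continuously with $a$ on $\Delta_n \eqdef \{(a,\varepsilon) : a\in a_0+(-\alpha,\alpha)^k, \ \varepsilon \in \Delta_n(a)\}$.

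The rescaling transformation $(x,y) \mapsto (M,B,\text{coordinates})$ is given by explicit formulas in terms of $\lambda_a, \gamma_a, J_{1a}$, the coefficients of $T_0, T_1$, and the iteration number $n$. Because these quantities are $C^d$ in $a$ and $C^r$ in the spatial variables, the rescaled parameters $M_n(a,\varepsilon), B_n(a,\varepsilon), R_n(a,\varepsilon)$ and the $o(1)$-terms are $C^d$-smooth in $(a,\varepsilon)$ on $\Delta_n$; the loss of two orders of smoothness in $M, B$ relative to $r$ (already present in Lemma~\ref{lema-GHM}) arises from the rescalings of the $x$ and $y$ variables, and uses our standing assumption $d \leq r-2$. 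The asymptotic expansions~\eqref{eq:MB} are obtained simply by replacing the constants $\lambda, \gamma, J_1$ in Lemma~\ref{lema-GHM} by their $C^d$-continuous $a$-dependent versions $\lambda_a, \gamma_a, J_{1a}$; the error terms $O(\gamma_a^{-n}+\lambda_a^n)$ and the $\cos(n\varphi + O(1))$ phase are uniform in $a \in a_0 + [-\alpha,\alpha]^k$ because this parameter domain is compact and the underlying spectral data are bounded away from the unit circle.

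The main obstacle I expect is the uniform control of smoothness as $n$ grows: the domains $\Delta_n(a)$ shrink to $\varepsilon = 0$ as $n\to\infty$ and the rescaling factors blow up like $\gamma_a^{2n}$ and $(\lambda_a\gamma_a)^n$, so one has to check carefully that differentiating the rescaling change of variables up to $d$ times in $a$ does not destroy the $o(1)$-remainder in~\eqref{eq-GHM2}. This is handled by the same type of bookkeeping as in the proof of~\cite[Lemmas~1,3]{GST08}, now performed for the extended variable $(a,\varepsilon)$ in place of $\varepsilon$: since the maps $T_0(a,\varepsilon)$ and $T_1(a,\varepsilon)$ are $C^{d,r}$ and we differentiate in $a$ at most $d \leq r-2$ times, each derivative falls on a factor whose size is already controlled by the original GST08 estimates, and the conclusion of the parametrized rescaling lemma follows.
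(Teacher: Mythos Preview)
Your proposal is correct and follows essentially the same approach as the paper: both argue that the explicit coordinate changes of the rescaling lemma in~\cite[Sec.~3.2]{GST08} depend smoothly on the extra parameter $a$, with the convergence exponents depending only on $n$ and the constants in the $O$-terms uniformly bounded by compactness of $a_0+[-\alpha,\alpha]^k$. Your write-up is somewhat more detailed (explicitly invoking the parametrized invariant-manifold theorem and itemizing the $C^d$-dependent ingredients), but the underlying strategy is identical to the paper's short proof.
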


\begin{proof}
Let us analyze the proof of the rescaling lemma in~\cite{GST08}, more specifically
the change of coordinates for the first return maps given in
Section 3.2~\cite{GST08} for the case $(2,1)$ with $\lambda\gamma>1$.
From equations ~\cite[Eq.~(3.12)-(3.16)]{GST08} we can observe the all the transformations of coordinates can be performed smoothly on the parameter
$a \in a_0+[-\alpha,\alpha]^k$. The exponents that will appear in the orders of convergence will not depend on the parameter $a$ but only on $n$. On the other hand the constants in the $O$-terms will depend on the parameter but these can be uniformely bounded due to the compactness of the parameter space.
These considerations allow us basically to apply
the rescaling Lemma~\ref{lema-GHM} smoothly on the parameter $a \in
a_0+[-\alpha,\alpha]^k$.
\end{proof}

\begin{lem} \label{Delta_expand}
For
$n$ large enough, $\Delta_{n}(a)$ can be taken in such a way that
$$
 \phi_{n,a}: \Delta_{n}(a) \to (-10,10)^2 \setminus B(0,r), \qquad \phi_{n,a}(\epsilon)
=(M_n(a,\epsilon), B_n(a,\epsilon))
$$
is a diffeomorphism where $M_n$ and $B_n$ are the functions given
in~\eqref{eq:MB} for fixed $n$ and $a$. Here $B(0,r)$ is a closed
ball of some small fixed radius $r$ around the origin.
\end{lem}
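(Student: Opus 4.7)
The plan is to exploit the huge expansion rates in~\eqref{eq:MB}: the map $\mu \mapsto M$ has derivative of order $\gamma_a^{2n}$ and, after restricting to a suitable branch, $\varphi \mapsto B$ has derivative of order $n(\lambda_a\gamma_a)^n$. Both factors diverge as $n \to \infty$, so for large $n$ the map $\phi_{n,a}$ is infinitely expanding in both coordinates and an inverse function theorem argument should produce a diffeomorphism onto any preassigned bounded target, provided we excise the region where things break down.

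First I would invert $M$ in $\mu$. From $M \sim \gamma_a^{2n}(\mu + O(\gamma_a^{-n}+\lambda_a^n))$ we have $\partial_\mu M \sim \gamma_a^{2n}$, uniformly in $a \in a_0+[-\alpha,\alpha]^k$. For $n$ large, a direct application of the inverse function theorem (or a contraction-mapping argument) produces a unique $\mu = \mu_n(a, M, \varphi)$, $C^d$ in all its arguments, solving the equation for $|M| \le 10$, with $|\mu| = O(\gamma_a^{-n})$. Substitute this into the second equation; the dominant term $(\lambda_a\gamma_a)^n \cos(n\varphi + O(1))$ remains unchanged to leading order, since the correction from $\mu$ is subdominant.

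Next I choose the $\varphi$-branch. Because $(\lambda_a\gamma_a)^n \to \infty$ while we only want $|B| \le 10$, the equation for $B$ forces
\[
|\cos(n\varphi + O(1))| \le 10 (\lambda_a\gamma_a)^{-n},
\]
so $n\varphi + O(1)$ must lie in an interval of length $O((\lambda_a\gamma_a)^{-n})$ around a zero of cosine. I fix the branch which is closest to the nominal value $\varphi = \varphi_0$ (this choice varies smoothly in $a$), and take $\varphi$ in the maximal interval of length $\pi/n$ on which $\cos(n\varphi + O(1))$ is strictly monotone. On this branch $|\sin(n\varphi + O(1))| = 1 - o(1)$, hence
\[
\frac{\partial B}{\partial \varphi} \sim -n(\lambda_a\gamma_a)^n \sin(n\varphi + O(1))
\]
is nonzero of order $n(\lambda_a\gamma_a)^n$. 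The Jacobian determinant of $\phi_{n,a}$ is then asymptotic to $-n\gamma_a^{2n}(\lambda_a\gamma_a)^n \sin(n\varphi+O(1))$, never vanishing, so the map is a local diffeomorphism; monotonicity of $\cos$ on the branch upgrades this to injectivity, and the intermediate value theorem together with the bounds above yields surjectivity onto the full range $\{(M,B) : |M| < 10, \ |B| < 10\}$.

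Finally, define
\[
\Delta_n(a) \eqdef \phi_{n,a}^{-1}\bigl((-10,10)^2 \setminus B(0,r)\bigr)
\]
restricted to the chosen branch. The exclusion of $B(0,r)$ is natural because $R_n = 2J_{1a}(\lambda_a^2\gamma_a)^n / B$ and $|\lambda_a^2\gamma_a| < 1$, so removing a fixed ball around $0$ guarantees $R_n = O((\lambda_a^2\gamma_a)^n) \to 0$ uniformly, consistent with Lemma~\ref{lema-GHM}. Smooth dependence on $a$ of the branch and of $\lambda_a, \gamma_a, J_{1a}$ implies the family $(\Delta_n(a))_a$ and the diffeomorphisms $\phi_{n,a}$ vary $C^d$-smoothly in $a$. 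The main obstacle is the global non-monotonicity of $\cos(n\varphi + O(1))$: it has $O(n)$ distinct branches over any fixed $\varphi$-interval, and without selecting a single branch the map is hopelessly many-to-one. The crucial observation that saves us is the gap between the target size ($|B| \leq 10$) and the amplitude $(\lambda_a\gamma_a)^n$, which automatically traps $\varphi$ in a regime where $\sin$ is uniformly bounded away from zero, allowing a clean branch choice.
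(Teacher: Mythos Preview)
Your proposal is correct and follows essentially the same approach as the paper: compute $\partial_\mu M \sim \gamma_a^{2n}$ and $\partial_\varphi B \sim n(\lambda_a\gamma_a)^n\sin(n\varphi+O(1))$, select a $\varphi$-branch near a zero of $\cos$ so that $\sin$ stays bounded away from zero, conclude that $\phi_{n,a}$ is an expanding local diffeomorphism whose image covers $(-10,10)^2$, and excise $B(0,r)$ to keep $R_n$ well-defined. The paper does this more tersely by centering at an explicit $\varepsilon^0_n(a)=(\mu^0_n(a),\varphi^0_n(a))$ with $\varphi^0_n(a)=\pi/(2n)+O(1/n)$ and invoking the exact formulas in \cite{GST08}, whereas you carry out the two-step inversion and the branch discussion more explicitly; the content is the same.
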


\begin{proof}
Let us introduce the parameter value $\varepsilon^0_n(a)=(\mu_n^0(a),\varphi^0_n(a))$,
which by definition satisfies $\phi_{n,a}(\varepsilon^0_n(a))=0$. It can be seen from
the expressions of $M$ and $B$ in~\eqref{eq:MB} that $\mu^0_n(a)=O(\gamma^{-n}_a+\lambda^n_a)$ and $\varphi^0_n(a)= \frac{\pi}{2n} +O(1/n)$. We take out $B(0,r)$ around the origin in $(-10,10)^2$ so that $B$ is bounded away from zero and $R_n$ is well-defined. Similar values were considered in~\cite[pg.~946]{GST08} and as is
claimed there, the rescaled functions $M$ and
$B$ can take arbitrarily finite values when $\mu$ varies close to
$\mu^0_n(a)$ and $\varphi$ near
$\varphi^0_n(a)$. Let us explain this.
Although the $O$-functions in~\eqref{eq:MB}  depend on
$\varepsilon$, the functions $M$ and $B$ basically only depend on
$\mu$ and $\varphi$ respectively for $n$ large enough. Actually
on~\cite[pg.~946]{GST08} are giving explicit expressions for
$M_{n,a}$ and $B_{n,a}$.
Using them one can
observe that
$$
 \partial_\mu M_{n,a} \sim \gamma_a^{2n} \not = 0 \quad \text{and} \quad
 \partial_\varphi B_{n,a} \sim n(\lambda_a\gamma_a)^n
 \sin(n\varphi+O(1)) \not = 0
$$
for all $\varepsilon=(\mu,\varphi)$ close to $\varepsilon^0_n(a)$.
Then the Jacobian of $\phi_{n,a}$ converges to infinity, uniformly
on $a\in a_0+[-\alpha,\alpha]^k$  as $n\to \infty$. The rate of
growth is exponential. This implies that $\phi_{n,a}$ is an
invertible expanding map with arbitrarily large uniform expansion
on $a\in a_0 + [-\alpha,\alpha]^k$. On the other hand, the size of
$\Delta_{n}(a)$ (coming mainly from considerations on the angle
$\varphi$) where the expanding map $\phi_{n,a}$ is defined has
decay of order $O(1/n)$. Thus, for $n$ large enough we get that a
neighborhood of $\varepsilon^0_n(a)$ can be taken so that its
image under $\phi_{n,a}$ is $(-10,10)^2$. In particular we can
take $\Delta_n(a)$ being diffeomorphic to $(-10,10)^k\setminus
B(0,r)$.
\end{proof}
Consequently,
$$
\Phi_n(a,\varepsilon)=(a,\phi_{n,a}(\varepsilon))
$$
is a diffeomorphism between the set $\Delta_n$ defined above and
$(a_0+[-\alpha,\alpha]^k)\times (-10,10)^2\setminus B(0,r)$.
On the other hand, although the coefficient $R_n$ depends on $B$,
note that the range of values it takes is negligible when $B$ is
bounded from zero and $n$ is large enough. Actually from the
relations in ~\eqref{eq:MB} it follows that $R_n=o(1)$. Thus, the
bifurcation diagram of~\eqref{eq-GHM2} can be studied from the
results described in Section~\ref{sec:rescaling-lemma} assuming
$R_n=o(1)$ independent of $B$.

Let us remind the reader of the Bogdanov-Takens $\mathrm{BT}_n(a)$
and the Horozov-Takens $\mathrm{HT}_n(a)$ points given
in~\eqref{eq:pontos}, which now also depend on $a$ and accumulate
at $\mathrm{BT}=(-1,1)$ and $\mathrm{HT}=(3,1)$ respectively as
$n\to \infty$. Hence, according to
Section~\ref{sec:rescaling-lemma}  for each $n$ large enough we
find open subsets $A^1_n(a)$, $A^2_n(a)$ in the $(M,B)$-parameter
plane such that if $(M,B)\in A^1_n(a)$ (resp.~$(M,B)\in
A^2_n(a)$), then $T_n$ has a hyperbolic attracting periodic point
(resp.~
attracting
smooth invariant circle) for all $a \in a_0+[-\alpha,\alpha]^k$.
Moreover, we can assume the points $\mathrm{BT}_n(a)$ and
$\mathrm{HT}_n(a)$
belong to the boundary of $A^1_n(a)$ and
$A^2_n(a)$ respectively. Since these sets vary $C^d$-continuously
with respect to the parameter $a \in [-\alpha,\alpha]^k$, we can
choose $C^d$-continuously $(M^*_n(a),B^*_n(a)) \in A^*_n(a)$ for
$*=1,2$ arbitrarily close to $\mathrm{BT}_n(a)$ and
$\mathrm{HT}_n(a)$ respectively.

Since $\Phi_n$ is a diffeomorphism, we may find
a $C^{d}$-function
$\varepsilon^*_n(a)=(\mu_n^*(a),\varphi^*_n(a))$ for $a \in
a_0+[-\alpha,\alpha]^k$, $*=1,2$, defined as
$\Phi_n^{-1}(a,(M^*_n(a),B^*_n(a))=(a,\varepsilon^*_n(a))$. In
particular,
\begin{equation} \label{eq2}
\begin{aligned}
M^*_n(a)&=M_{n,a}(\varepsilon^*_n(a))\sim \gamma^{2n}_a(\mu_n^*(a) +
O(\gamma_a^{-n}+\lambda_a^n)) \\ 
B^*_n(a)&=B_{n,a}(\varepsilon^*_n(a)) \sim (\lambda_a
\gamma_a)^{n}\cos(n\varphi^*_n(a)+O(1)).
\end{aligned}
\end{equation}
Extending smoothly $\varepsilon^*_n(a)$ to $\mathbb{I}^k$ we can
consider the sequence of families
$\tilde{g}_n=(\tilde{g}_{n,a})_a$ where
$$
   \tilde{g}_{n,a}=g_{a,\varepsilon^*_n(a)} \qquad \text{for  $a\in
   \mathbb{I}^k$ and $n$ large enough}.
$$
Observe that $\tilde{g}_{n,a}=g_a$ for $a \not \in a_0 +
(-2\alpha,2\alpha)^k$ and we may assume
has an $n$-periodic attractor (a sink or an invariant circle) for
all $a\in a_0+(-\alpha,\alpha)^k$.

To conclude the proof of the lemma  we only need to show that
$\tilde{g}_n$ converges to $g$ in the $C^{d,r}$-topology. To do
this, notice that the $C^{d,r}$-norm satisfies
$$
 \|\bar{g}_n-g\|=\|(I-H_{a,\varepsilon^*_n(a)})\circ g_a \|  \leq
  \|I-H_{a,\varepsilon^*_n(a)}\| \, \|g\|
$$
where $I$ denotes the identity and  the $C^{d,r}$-norm of any $g=(g_a)_a$ in
the Berger domain $\mathscr{U}$ is bounded.
Thus, we
only need to calculate the $C^{d,r}$-norm of the family
$(I-H_{a,\varepsilon^*_n(a)})_a$. Since
$H_{a,\varepsilon^*_n(a)}=I$ if $a\not \in a_0 +
(-2\alpha,2\alpha)^k$ or $(x,u,y)\not \in U \cup V$, therefore
$\|I-H_{a,\varepsilon^*_n(a)}\|$ is less or equal than
\begin{align*}
  \left\|\rho\left(\frac{a-a_0}{2\alpha}\right)\phi\left(\frac{\|(x,u,y)\|}{2\delta}\right)
  (1-e^{i\varphi^*_n(a)})x\right\| +
  \left\|\rho\left(\frac{a-a_0}{2\alpha}\right)\phi\left(\frac{\|(x,u,y)-(x^+,u^+,0)\|}{2\delta}\right)\mu^*_n(a)\right\|.
\end{align*}
To estimate the $C^{d,r}$-norms above it suffices to show
that the functions
$$
  F_{n}(a)= \rho(\frac{a-a_0}{2\alpha})(1-e^{i\varphi^*_n(a)}) \quad
  \text{and} \quad G_{n}(a)  = \rho(\frac{a-a_0}{2\alpha})\mu^*_n(a)
$$
have $C^d$-norm small when $n$ is large and $a \in  a_0+(-2\alpha,2\alpha)^k$. Actually we will prove
the following:
\begin{equation} \label{eq:order}
\|F_{n}\|_{C^d}= O\left(\frac{\alpha^{-d}}{n}\right) \quad
\text{and} \quad \|G_{n}\|_{C^d}=
O\left(\frac{\alpha^{-d}}{n}\right)
\end{equation}
Observe that this assertion completes the proof.
To prove this we will need the following derivative estimates on the functions $\mu_n^*(a)$ and $\varphi_n^*(a)$. Here the symbol $\partial_a^j$ is used to denote the $j$-th partial
derivative with respect to  the coordinates $a_i$ of $a$ using the multi-index notation.

\begin{lem} \label{Est_der} For $1\leq |j|\leq d$
\begin{enumerate}[label=(\roman*)]
\item $\mu_n^*(a)=O(\gamma_a^{-n}+\lambda_a^n) \ \ \text{and}  \ \ \partial_a^j\mu_n^*(a)=O(n^{|j|}(
\gamma_a^{-n}+\lambda_a^n)).$ \\[-0.3cm]
\item $\varphi_n^*(a)=O\left(\frac{1}{n}\right), \ \
\partial_a^j\varphi_n^*(a)=O\left(\frac{n^{{|j|-1}}}{(\gamma_a\lambda_a)^{n}}\right) \ \ \text{and}
\ \
\partial_a^j(e^{i\varphi_n^*(a)})=O\left(\frac{n^{{|j|-1}}}{(\gamma_a\lambda_a)^{n}}\right).$
\end{enumerate}

\end{lem}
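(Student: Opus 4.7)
The plan is to extract both parts from the defining relations
\[
M_n^*(a) = M_{n,a}\bigl(\mu_n^*(a), \varphi_n^*(a)\bigr), \qquad B_n^*(a) = B_{n,a}\bigl(\mu_n^*(a), \varphi_n^*(a)\bigr),
\]
combined with the explicit asymptotics~\eqref{eq:MB} and the uniform $C^d$-boundedness of $(M_n^*(a),B_n^*(a))\in(-10,10)^2\setminus B(0,r)$. Implicit differentiation is legitimate because, by Lemma~\ref{Delta_expand}, the Jacobian of $\phi_{n,a}$ is expanding, with $\partial_\mu M_{n,a}\sim\gamma_a^{2n}$ and $\partial_\varphi B_{n,a}\sim n(\lambda_a\gamma_a)^n$ dominating the off-diagonal entries.

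For part (i) I would exploit that $M_{n,a}$ is linear in $\mu$ to leading order. Writing
\[
M_{n,a}(\mu,\varphi)=\gamma_a^{2n}\mu+\gamma_a^{2n}\rho_n(a,\mu,\varphi),\qquad |\rho_n|=O(\gamma_a^{-n}+\lambda_a^n),
\]
and evaluating at $\varepsilon_n^*(a)$ yields $\mu_n^*(a)=\gamma_a^{-2n}M_n^*(a)-\rho_n(a,\varepsilon_n^*(a))$, which is $O(\gamma_a^{-n}+\lambda_a^n)$ since $\gamma_a>1$. The derivative estimate is then bookkeeping with Leibniz and Fa\`a di Bruno: every $\partial_a$ falling on $\gamma_a^{\pm n}$ or $\lambda_a^n$ produces precisely one factor of $n$ (times a bounded derivative of $\log\gamma_a$ or $\log\lambda_a$), so an induction on $|j|$ closes at $\partial_a^j\mu_n^*(a)=O(n^{|j|}(\gamma_a^{-n}+\lambda_a^n))$.

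For part (ii) I would start from
\[
\cos\bigl(n\varphi_n^*(a)+O(1)\bigr)=\frac{B_n^*(a)}{(\lambda_a\gamma_a)^n}\bigl(1+o(1)\bigr).
\]
Since the right-hand side is exponentially small, $n\varphi_n^*(a)+O(1)$ is trapped within $O((\lambda_a\gamma_a)^{-n})$ of some $\tfrac{\pi}{2}+k\pi$, which gives $\varphi_n^*(a)=O(1/n)$ and keeps $|\sin(n\varphi_n^*(a)+O(1))|$ bounded below. Implicit differentiation of $B_n^*(a)=B_{n,a}(\varepsilon_n^*(a))$ and solving for $\partial_a\varphi_n^*(a)$ yields
\[
\partial_a\varphi_n^*(a)=\frac{\partial_aB_n^*(a)-(\partial_aB_{n,a})(\varepsilon_n^*(a))-\partial_\mu B_{n,a}(\varepsilon_n^*(a))\,\partial_a\mu_n^*(a)}{\partial_\varphi B_{n,a}(\varepsilon_n^*(a))},
\]
whose denominator has size $\sim n(\lambda_a\gamma_a)^n$. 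The key point is that $(\partial_aB_{n,a})(\varepsilon_n^*(a))$, a priori of size $(\lambda_a\gamma_a)^n$, is reduced to $O(n)$ because the $a$-derivative falling on $(\lambda_a\gamma_a)^n$ is multiplied by $\cos(n\varphi_n^*(a)+O(1))=O((\lambda_a\gamma_a)^{-n})$, while the $a$-derivative of the phase inside the cosine vanishes (the $O(1)$ phase being $a$-independent in the rescaling construction). Combined with part~(i), every numerator summand is then $O(n)$, giving $\partial_a\varphi_n^*(a)=O((\lambda_a\gamma_a)^{-n})$. Higher $|j|$ follow by induction: each further $\partial_a$ contributes at most one extra $n$ to the numerator while the denominator continues to supply $n(\lambda_a\gamma_a)^n$, matching $O(n^{|j|-1}(\lambda_a\gamma_a)^{-n})$. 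Finally $\partial_a^j(e^{i\varphi_n^*(a)})$ is handled by Fa\`a di Bruno: the dominant contribution is the single-factor term $ie^{i\varphi_n^*(a)}\partial_a^j\varphi_n^*(a)$, since any product of two or more factors $\partial_a^\ell\varphi_n^*$ carries at least two exponentially small factors.

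The main technical obstacle is precisely the claim used above that the ``$O(1)$'' phase inside the cosine of~\eqref{eq:MB} has $a$-derivatives exponentially suppressed (ideally, is $a$-independent). Verifying this amounts to inspecting the explicit change of coordinates performed in the proof of Lemma~\ref{para-lema-GHM} (based on~\cite[p.~946]{GST08}) and checking that every non-polynomial-in-$n$ dependence on $a$ is confined to the Jacobian factors $\gamma_a^n$ and $\lambda_a^n$, not to the phase. Without that property, only the weaker bound $\partial_a\varphi_n^*(a)=O(1/n)$ survives, which would still suffice for~\eqref{eq:order} but not for the sharper statement (ii). Once the phase is under control, the chain-rule argument above closes without further subtlety.
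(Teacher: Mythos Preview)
Your approach is essentially the paper's: solve the $M$-relation for $\mu_n^*$ and the $B$-relation for $\varphi_n^*$, then induct on $|j|$ via Leibniz, using that each $\partial_a$ hitting $\gamma_a^{\pm n}$ or $\lambda_a^n$ contributes exactly one factor of $n$. Two points of difference are worth noting. First, where you invoke ``uniform $C^d$-boundedness of $(M_n^*,B_n^*)$'', the paper actually proves the sharper statement $\partial_a^j M_n^*=o(1)$ and $\partial_a^j B_n^*=o(1)$ for $|j|\geq 1$ (its Claims~\ref{claim_M} and~\ref{claim1}); this follows because $(M_n^*,B_n^*)$ is chosen $O(R_n)$-close to $\mathrm{BT}_n(a)$ or $\mathrm{HT}_n(a)$, and these points together with $R_n$ have all $a$-derivatives $o(1)$. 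Mere containment in $(-10,10)^2$ gives no information about $\partial_a^j$, so you should supply this step. Second, the phase issue you single out is indeed the crux, and the paper does \emph{not} resolve it by declaring the $O(1)$ phase $a$-independent (it is not). Instead the paper packages the whole phase as $h_n(a)=n\varphi_n^*(a)+O(1)$, derives $\partial_a^j h_n(a)=O(n^{|j|}(\lambda_a\gamma_a)^{-n})$ directly from $B_n^*(a)=(\lambda_a\gamma_a)^n\cos h_n(a)$ and the auxiliary estimate $\partial_a^j\bigl((\lambda_a\gamma_a)^n\sin h_n(a)\bigr)=O(n^{|j|}(\lambda_a\gamma_a)^n)$ (its Claim~\ref{claim2}, obtained from the Pythagorean identity $(\lambda_a\gamma_a)^{2n}-(B_n^*)^2$), and only then divides by $n$. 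Your implicit-differentiation formula is equivalent once reorganized this way; the point is that one never needs to separate the $a$-dependence of the geometric phase from that of $\varphi_n^*$ until the very last step.
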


Assuming the above lemma let us now prove the estimates in~\eqref{eq:order}, starting with the second one. To do
this, using the Leibniz formula,
$$
\partial^\ell_a G_{n}(a)=\sum_{j:j\leq\ell} \binom{\ell}{j} \ \partial_a^{\ell-j}
\rho\left(\frac{a-a_0}{2\alpha}\right) \cdot \partial_a^j\mu_n^*(a).
$$
Substituting the estimate from Lemma \ref{Est_der} (i) in the above expression we
obtain that
$$\partial_a^\ell G_{n,\alpha}(a)=O(\alpha^{-d}\cdot n^{\left|\ell \right|}
(\gamma_a^{-n}+\lambda_a^n))$$
which, in fact, implies a better
estimate than~\eqref{eq:order}.

To prove  the first estimate in~\eqref{eq:order} for $\|F_{n}\|_{C^d}$, using again the Leibniz formula
$$
\partial_a^\ell F_{n,\alpha}(a) =
(1-e^{i\varphi_n^*(a)}) \ \partial_a^\ell\rho\left(\frac{a-a_0}{2\alpha}\right)
-\sum_{j:\ 0<j\leq\ell} \binom{\ell}{j}\, \
 \partial_a^{j}(e^{i\varphi_n^*(a)}) \ \partial_a^{\ell-j}\rho\left(\frac{a-a_0}{2\alpha}\right).
 $$
Applying Lemma~\ref{Est_der} (ii),
$$
\partial_a^\ell F_{n,\alpha}(a) =O\left(\frac{\alpha^{-\left|\ell\right|}}{n}\right)
+\sum_{j:\ 0<j\leq\ell} \binom{\ell}{j}\, O\left(
\frac{n^{{|j|-1}}}{(\gamma_a\lambda_a)^{n}} \cdot
\alpha^{-\left|\ell-j\right|} \right).
$$
Thus, $\partial_a^\ell F_{n,\alpha}(a)=
O\left(\alpha^{-d}n^{-1}\right) $. To complete the proof of Lemma
\ref{lema2} we have to show the estimates of Lemma \ref{Est_der}.
\begin{proof} [Proof of Lemma \ref{Est_der}]
First we will prove the estimates on $\mu_n^*(a)$ and
$\partial_a^j\mu_n^*(a)$. Let us first observe that $M$, up to a
multiplicative factor, is actually equal to  $\gamma^{2n}_a(\mu +
O(\gamma_a^{-n}+\lambda_a^n))$ (see the exact expressions for $M$
in ~\cite[Section 3.2]{GST08}). Although this multiplicative
factor depends on the perturbation $\varepsilon_n$, to avoid
notational clutter we will assume in what follows and without loss
of generality that this factor is always $1$. Then $M^*_n(a)$ can
be written as
\begin{equation} \label{Eq_M}
M^*_n(a)=\gamma^{2n}_a\mu_n^*(a) + h_n(a)
\end{equation}
where $h_n(a)$ is independent of $\mu_n^*(a)$ and
$h_n(a)=O(\gamma_a^{-n}+\lambda_a^n)$. Again using the explicit
expressions for $M$ in ~\cite[Section 3.2]{GST08}, one can see
that $\partial_a^j h_n(a)=O(n^{|j|}(\gamma_a^{-n}+\lambda_a^n))$.

\begin{claim} \label{claim_M}
\begin{equation}  \label{covergence_M}
M^*_n=O(1) \quad \text{and} \quad \partial_a^j M^*_n=o(1) \ \
\text{for all $1\leq |j|\leq d$}
\end{equation}
\end{claim}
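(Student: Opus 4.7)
The plan is to exploit the freedom available in the choice of the target value $(M^*_n(a),B^*_n(a))\in A^*_n(a)$ and take $M^*_n(a)$ to be a \emph{constant} $M^*_n$, independent of $a$, while allowing $B^*_n(a)$ to vary with $a$ as needed to keep the pair inside $A^*_n(a)$. With such a choice $M^*_n(a)\equiv M^*_n$ yields $\partial_a^j M^*_n\equiv 0$, which is trivially $o(1)$, and $M^*_n$ lies in a bounded neighborhood of the accumulation point $\mathrm{BT}=(-1,1)$ (case $*=1$) or $\mathrm{HT}=(3,1)$ (case $*=2$), so $M^*_n=O(1)$. The content of the claim is therefore reduced to showing that a constant choice of $M^*_n$ is admissible.

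To justify this, I would first note that the explicit bifurcation curves $L^+_n$, $L^-_n$, $L^\varphi_n$ (and the curves $K$, $H$ near $\mathrm{HT}_n$) recorded in Section~\ref{sec:rescaling-lemma} depend on $a$ only through $R_n(a)=2J_{1a}B^{-1}(\lambda_a^2\gamma_a)^n$. The hypothesis $|\lambda_a^2\gamma_a|<1$ from~\eqref{eq1}, together with the compactness of $a_0+[-\alpha,\alpha]^k$, yields a uniform bound $|\lambda_a^2\gamma_a|\le\rho<1$ on the parameter box, hence $R_n(a)\to 0$ uniformly in $a$ as $n\to\infty$. Consequently the bifurcation regions $A^*_n(a)$ converge uniformly in $a$ to fixed open regions of the limiting H\'enon map.

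For $*=1$, the limiting H\'enon region near $\mathrm{BT}$ has non-empty interior, so I can fix a constant $(M^*,B^*)$ inside it as close to $(-1,1)$ as desired; for $n$ large enough this constant belongs to $A^1_n(a)$ for every $a$. For $*=2$, the region $A^2_n(a)$ has width only $O(R_n)$ in the $B$-direction but is of fixed order of magnitude in the $M$-direction (see Figure~\ref{fig-Bminus}), so I still fix $M^*$ constant (close to $3$) and choose $B^*_n(a)$ inside the slice $\{M=M^*\}\cap A^2_n(a)$, which is a non-empty open interval whose endpoints depend $C^d$-smoothly on $a$. In both cases $M^*_n(a)\equiv M^*$ is constant in $a$, and setting $\varepsilon^*_n(a)=\Phi_n^{-1}(a,(M^*,B^*_n(a)))$ retains $C^d$-smoothness in $a$ via Lemma~\ref{Delta_expand}. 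The claim then follows immediately.

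The main bookkeeping obstacle is the verification that the set-convergence $A^*_n(a)\to A^*_\infty$ is genuinely uniform in $a$, and in the invariant-circle case that the slice $\{M=M^*\}\cap A^2_n(a)$ stays open and non-empty for every $a$ in the box. This reduces to the transversality of the bifurcation curves at the Horozov-Takens point together with the uniform control of $R_n(a)$ on the compact parameter box, but it is the step where uniformity in $a$ must be checked most carefully in order to license the constant choice of $M^*_n$.
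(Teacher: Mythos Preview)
Your approach is correct but genuinely different from the paper's. The paper does \emph{not} go back and constrain the choice of $(M^*_n(a),B^*_n(a))$; instead it argues that, for \emph{any} $C^d$-choice taken $O(R_n)$-close to $\mathrm{BT}_n(a)$ or $\mathrm{HT}_n(a)$, the explicit formulas~\eqref{eq:pontos} show these bifurcation points differ from the constants $(-1,1)$ and $(3,1)$ by $O(R_n)$, with $a$-derivatives of order $O(\partial_a^j R_n)$; since $R_n$ and all its $a$-derivatives are $o(1)$ (from $|\lambda_a^2\gamma_a|<1$), the same bounds transfer to $M^*_n(a)$. Your route instead exploits the residual freedom in the choice to force $M^*_n$ constant, which trivializes the claim. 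What you gain is directness; what you lose is uniformity of method: the paper's argument applies verbatim to the companion Claim~\ref{claim1} for $B^*_n$, whereas in your setup the case $*=2$ forces $B^*_n(a)$ to vary (the admissible $B$-slice has width only $O(R_n)$), so you would still need an argument of the paper's type---controlling $\partial_a^j B^*_n$ via $\partial_a^j R_n$---to handle $B^*_n$ later. Your uniformity check (that a constant $M^*$ lies in every $A^*_n(a)$ for $n$ large) is fine and follows, as you say, from uniform convergence $R_n(a)\to 0$ on the compact parameter box together with the finite $M$-extent of the regions.
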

\begin{proof}[Proof of Claim~\ref{claim1}]
The expressions of $\mathrm{BT}_n(a)$ and $\mathrm{HT}_n(a)$  in
\eqref{eq:pontos} imply that these functions have order $O(R_n)$
and analogously their respective derivatives have order
$O(\partial_a^j(R_n))$. On the other hand $M^*_n(a)$
 is
$O(R_n)$-close to $\mathrm{BT}_n(a)$ or $\mathrm{HT}_n(a)$, see Section~\ref{sec:rescaling-lemma}. Also the derivatives of $\partial_a^j M^*_n$ are $O(\partial_a^j(R_n))$-close. Therefore,
$$M^*_n=O(1)+O(R_n) \quad \text{and} \quad \partial_a^j M^*_n=O(\partial_a^j R_n).$$
Finally, from the equation for $R_n$ given in Lemma~\ref{para-lema-GHM}, we have that
$R_n=o(1)$ and $\partial_a^j R_n=o(1)$. Combining this with the previous estimates proves the claim.
\end{proof}
In particular, from~\eqref{Eq_M} we obtain that
$$\gamma^{2n}_a \mu_n^*(a)=O(1) + O(\gamma_a^{-n}+\lambda_a^n),$$
which implies
$\mu_n^*(a)=O(\gamma^{-2n}_a)=O(\gamma_a^{-n}+\lambda_a^n)$. To
show the estimates on $\partial_a^j\mu_n^*(a)$ we will proceed by
an inductive argument on $|j|$. In the case $|j|=1$, taking the
derivate of \eqref{Eq_M} gives
$$
\partial_a^j M^*_n(a)=\partial_a^j(\gamma^{2n}_a)\mu_n^*(a) +\gamma^{2n}_a \partial_a^j\mu_n^*(a)+
\partial_a^j h_n(a).
$$
We have that $\mu_n^*(a)=O(\gamma_a^{-n}+\lambda_a^n)$,
$\partial_a^j M^*_n(a)=o(1)$ from Claim~\ref{claim_M} and
$\partial_a^j h_n(a)=O(n(\gamma_a^{n}+\lambda_a^n))$. Also
$\partial_a^j(\gamma^{2n}_a)=2n\gamma^{2n-1}_a\cdot
\partial_a^j(\gamma_a)=O(n\gamma^{2n}_a)$. Combining all these
estimates with the previous equation implies
$$\gamma^{2n}_a \partial_a^j\mu_n^*(a)=O(n\gamma^{2n}_a)\cdot O(\gamma_a^{-n}+\lambda_a^n)
+O(n(\gamma_a^{-n}+\lambda_a^n))+o(1).$$ Then
$\partial_a^j\mu_n^*(a)=O(n( \gamma_a^{-n}+\lambda_a^n))$, which
proves the formula for $|j|=1$. To prove the necessary expression
for any multi-index $\ell$, we will proceed by induction assuming
that $\partial_a^j\mu_n^*(a)=O(n^{|j|}(
\gamma_a^{-n}+\lambda_a^n))$ for any $j$ with $1\leq |j|
<\left|\ell\right|$ and will show the same estimate for $\ell$.
From~\eqref{Eq_M} and using the Leibniz formula we obtain
\begin{equation} \label{eq_M2}
 \partial_a^{\ell}M^*_n(a) =
\sum_{j:j\leq\ell} \binom{\ell}{j} \ \partial_a^{\ell-j}
(\gamma^{2n}_a) \cdot \partial_a^j\mu_n^*(a)+\partial_a^{\ell}h_n(a).
\end{equation}
Now $\partial_a^{\ell-j}
(\gamma^{2n}_a)=O(n^{\left|\ell-j\right|}\gamma^{2n}_a)$ and so
the order of $\partial_a^{\ell-j}(\gamma^{2n}_a) \cdot \partial_a^j\mu_n^*(a)$ is
$$O(n^{\left|\ell-j\right|}\gamma^{2n}_a)\cdot O(n^{|j|}( \gamma_a^{-n}+\lambda_a^n))
=O(n^{\left|\ell\right|}\gamma^{2n}_a(
\gamma_a^{-n}+\lambda_a^n)).$$ Also
$\partial_a^{\ell}M^*_n(a)=o(1)$ by Claim~\ref{claim_M} and
$\partial_a^{\ell}h_n(a)=O(n^{\left|\ell\right|}(\gamma_a^{n}+\lambda_a^n))$.
Then isolating the term with
$j=\ell$ from the sum in~\eqref{eq_M2} we get that
$$\gamma^{2n}_a\cdot\partial_a^\ell\mu_n^*(a)=
O(n^{\left|\ell\right|}\gamma^{2n}_a( \gamma_a^{-n}+\lambda_a^n))
+O(n^{\left|\ell\right|}(\gamma_a^{-n}+\lambda_a^n))+o(1).$$ Then
we may conclude that
$\partial_a^\ell\mu_n^*(a)=O(n^{\left|\ell\right|}(
\gamma_a^{-n}+\lambda_a^n))$ proving item~(i) of
Lemma~\ref{Est_der}.

Now we will prove the second part of the lemma, that is, the
estimates on $\varphi_n^*(a)$ and $\partial_a^j\varphi_n^*(a)$. As
was mentioned before in Lemma~\ref{Delta_expand} because of the
expressions in~\cite[pg.~946]{GST08}, we have that
$\varphi^0_n(a)= \frac{\pi}{2n} +O(1/n)$ and the size of
$\Delta_{n}(a)$ has decay of order $O(1/n)$. Thus, we may also
conclude that $\varphi_n^*(a)=O(n^{-1})$. Now assuming
$\partial_a^j\varphi_n^*(a)=O\left(n^{{|j|-1}}(\gamma_a\lambda_a)^{-n}\right)$,
it is not hard to see by an inductive argument on the derivatives
that also
$\partial_a^j\left(e^{i\varphi_n^*(a)}\right)=O\left(n^{{|j|-1}}(\gamma_a\lambda_a)^{-n}\right)$.

In what follows we will prove that $\partial_a^j\varphi_n^*(a)=
O(n^{|j|-1}(\gamma_a\lambda_a)^{-n})$ for $j\geq 1$. This will be
done by induction in $|j|$, starting with $|j|=1$. To do this,
according to~\eqref{eq2} and writting
$h_n(a)=n\varphi_n^*(a)+O(1)$, we have that $B^*_n(a)
\sim(\lambda_a\gamma_a)^{n}\cos(h_n(a))$. Having in mind the exact
expression for the function $B$ given in~\cite[pg.~946]{GST08},
$B_n(a)$ differs from $(\lambda_a\gamma_a)^{n}\cos(h_n(a))$ by a
multiplicative constant $b_0$, that depends on $\varepsilon_n$.
Similarly to what was done for $M^*_n(a)$ and to avoid unnecessary
notational complications we will assume, without loss of
generality, that this factor is always $1$. Then,
\begin{equation} \label{B*}
B^*_n(a)=(\lambda_a\gamma_a)^{n}\cos(h_n(a))
\end{equation} and
derivating both sides of the equation with respect to some index
$j$, $|j|=1$, we obtain
\begin{align}
\label{eq:O1}  \partial_a^j B^*_n(a) &= n \cdot B^*_n(a) \cdot \partial_a^j (\log
\lambda_a \gamma_a) + (\lambda_a \gamma_a)^{n}
  \sin(h_n(a)) \cdot \partial_a^j h_n(a).
\end{align}
Now to estimate the order of $\partial_a^j B^*_n(a)$, we have the following claim
whose proof we omit as it is analogous to that of Claim~\ref{claim_M}.
\begin{claim} \label{claim1}
\begin{equation}  \label{covergence}
B^*_n=O(1) \quad \text{and} \quad \partial_a^j B^*_n=o(1) \ \
\text{for all $1\leq |j|\leq d$}
\end{equation}
\end{claim}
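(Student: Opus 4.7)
The plan is to prove Claim~\ref{claim1} by mirroring word-for-word the argument used for Claim~\ref{claim_M}, now tracking the $B$-coordinate rather than the $M$-coordinate of the Bogdanov-Takens and Horozov-Takens points. First I would read off from \eqref{eq:pontos} that the $B$-coordinates of $\mathrm{BT}_n(a)$ and $\mathrm{HT}_n(a)$ are, respectively,
\[
  B_{\mathrm{BT}_n(a)}=1+\frac{R_n(a)}{1+R_n(a)/2}=1+O(R_n(a)),\qquad B_{\mathrm{HT}_n(a)}=\frac{1-R_n(a)/2}{1+R_n(a)/2}=1+O(R_n(a)).
\]
Differentiating these rational expressions in $R_n(a)$ by the chain rule yields $\partial_a^j B_{\mathrm{BT}_n(a)}=O(\partial_a^j R_n(a))$, and analogously for $\mathrm{HT}_n(a)$.

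Next I would invoke the construction of $A^1_n(a)$ and $A^2_n(a)$ from Section~\ref{sec:rescaling-lemma}: the point $(M^*_n(a),B^*_n(a))\in A^*_n(a)$ is selected $C^d$-smoothly and arbitrarily close to $\mathrm{BT}_n(a)$ or $\mathrm{HT}_n(a)$, so that its distance to the corresponding bifurcation point and the distance of its $j$-th derivatives are of order $O(R_n)$ and $O(\partial_a^j R_n)$ respectively. Combining with the previous step gives
\[
  B^*_n(a)=1+O(R_n(a))\quad\text{and}\quad \partial_a^j B^*_n(a)=O(\partial_a^j R_n(a))\ \ \text{for }1\leq |j|\leq d.
\]

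To conclude, I would use the explicit formula $R_n(a,\varepsilon)=\frac{2J_{1a}}{B}(\lambda_a^2\gamma_a)^n$ of Lemma~\ref{para-lema-GHM} together with the hypothesis $|\lambda^2\gamma|<1$ in~\eqref{eq1}. Since $\lambda_a^2\gamma_a$ is continuous on the compact set $a_0+[-\alpha,\alpha]^k$ and strictly less than $1$ in modulus, $\max_a|\lambda_a^2\gamma_a|<1$, giving $R_n=o(1)$ uniformly in $a$; successive differentiation brings down only polynomial factors in $n$ from $(\lambda_a^2\gamma_a)^n$ and from the smooth dependence of $\lambda_a,\gamma_a,J_{1a}$, which are absorbed by the exponential decay, so $\partial_a^j R_n=o(1)$ uniformly in $a$. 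The denominator $B$ is harmless because the ball $B(0,r)$ is excised in Lemma~\ref{Delta_expand}, keeping $|B^*_n(a)|$ uniformly bounded away from zero for large $n$. Substituting these into the display above yields $B^*_n=O(1)$ and $\partial_a^j B^*_n=o(1)$, as desired.

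The main (mild) point to check is the uniformity in $a$ of the bounds on $R_n$ and its derivatives, which as explained above is automatic from compactness of $a_0+[-\alpha,\alpha]^k$ and the strict inequality in~\eqref{eq1}; beyond that, the proof is a direct transcription of the argument for Claim~\ref{claim_M} with the $B$-coordinate in place of the $M$-coordinate, which is why the paper chooses to omit it.
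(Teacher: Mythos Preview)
Your proposal is correct and is precisely the transcription of the proof of Claim~\ref{claim_M} to the $B$-coordinate that the paper has in mind when it omits the argument as analogous. The only content is tracking the $B$-coordinates of $\mathrm{BT}_n(a)$, $\mathrm{HT}_n(a)$ from~\eqref{eq:pontos} and invoking $R_n,\partial_a^j R_n=o(1)$, exactly as you do.
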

Since the norms of the functions $B^*_n(a),\ \partial_a^j (\log
\lambda_a \gamma_a)$ are bounded from above and using that $\partial_a^j B^*_n=o(1)$ due to Claim~\ref{claim1} we get from~\eqref{eq:O1} that
\begin{align}
(\lambda_a \gamma_a)^{n}
  \sin(h_n(a)) \cdot \partial_a^j h_n(a)=O(n)+o(1).
\end{align}
Since $\sin(h_n(a))$ is bounded away from zero by the choice of
$\varphi_n$, resolving  the previous equation for $\partial_a^j
h_n(a)$
we obtain that $\partial_a^j h_n(a)=O(n(\lambda_a\gamma_a)^{-n})$.
This implies that
$\partial_a^j\varphi_n^*(a)=O((\lambda_a\gamma_a)^{-n})$ and
proves the formula for $|j|=1$. To prove the expression for any
index $\ell$, we will proceed by induction assuming that
$\partial_a^j h_n(a)= O(n^{|j|}(\gamma_a\lambda_a)^{-n})$ for any
$j$ with $1\leq |j| <\left|\ell\right|$ and will show that
$\partial_a^\ell h_n(a)=
O(n^{\left|\ell\right|}(\gamma_a\lambda_a)^{-n})$. This will imply
that
$\partial_a^\ell\varphi_n^*(a)=O(n^{\left|\ell\right|-1}(\lambda_a\gamma_a)^{-n})$,
as it is required to complete the proof of the lemma.

Take $\ell$ with $\left|\ell\right|> 1$, and fix $\iota<\ell$ such
that $\left|\iota\right|=1$. Since
$\partial_a^{\ell}B^*_n(a)=\partial_a^{\ell-\iota}(\partial_a^{\iota}B^*_n(a))$
we get from~\eqref{B*}, \eqref{eq:O1} and by using the Leibniz
formula,
\begin{equation} \label{eq:final}
 \partial_a^{\ell}B^*_n(a) = n\cdot
\partial_a^{\ell-\iota}(B^*_n(a)\cdot \partial_a(\log \lambda_a\gamma_a)) +
  \sum_{j:j\leq (\ell-\iota)} \binom{\ell-\iota}{j}\,
 \partial_a^{(\ell-\iota)-j}((\lambda_a\gamma_a)^{n}\sin(h_n(a)))\cdot
 \partial_a^{j+\iota} h_n(a).
\end{equation}
Now we will determine the order of the terms in the above equation.
\begin{claim} \label{claim2}
\begin{equation}
\partial_a^j
((\lambda_a\gamma_a)^n\sin(h_n(a)))=O(n^{|j|}(\lambda_a\gamma_a)^n)
\quad \text{for all $1\leq |j|\leq d$}
\end{equation}
\end{claim}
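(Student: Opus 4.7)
The plan is to prove Claim~\ref{claim2} by the Leibniz formula applied to the product $(\lambda_a\gamma_a)^n \cdot \sin(h_n(a))$ and separately bounding each factor. Explicitly, we write
\begin{equation*}
\partial_a^j\bigl((\lambda_a\gamma_a)^n\sin(h_n(a))\bigr)=\sum_{k\leq j}\binom{j}{k}\partial_a^{j-k}\bigl((\lambda_a\gamma_a)^n\bigr)\cdot \partial_a^k\sin(h_n(a)).
\end{equation*}
For the first factor a direct chain-rule / Faà di Bruno computation (or a short induction on $|k|$) using that $\lambda_a$ and $\gamma_a$ are $C^d$-functions with uniformly bounded derivatives on $a_0+[-\alpha,\alpha]^k$ gives $\partial_a^{k}((\lambda_a\gamma_a)^n)=O(n^{|k|}(\lambda_a\gamma_a)^n)$, since each derivative pulls down a factor of $n$ coming from $\partial_a(\log(\lambda_a\gamma_a))$ which is $O(1)$.

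For the second factor, the term with $k=0$ is simply $\sin(h_n(a))=O(1)$, and combined with the first factor yields the dominant contribution $O(n^{|j|}(\lambda_a\gamma_a)^n)$, which is exactly the desired bound. For $k\geq 1$, Faà di Bruno expresses $\partial_a^k\sin(h_n(a))$ as a finite sum of products of the form $\sin^{(p)}(h_n(a))\cdot \prod_i \partial_a^{j_i}h_n(a)$, with $p\geq 1$ and $\sum_i|j_i|=|k|$. Since $\sin^{(p)}$ is bounded and each $|j_i|<|\ell|$, the inductive hypothesis
\begin{equation*}
\partial_a^{j_i}h_n(a)=O\bigl(n^{|j_i|}(\lambda_a\gamma_a)^{-n}\bigr)
\end{equation*}
applies, and multiplying at least one such factor produces $O(n^{|k|}(\lambda_a\gamma_a)^{-n})$ at most (additional factors only make the estimate smaller because $(\lambda_a\gamma_a)^{-n}\to 0$).

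Combining, the $k\geq 1$ terms contribute at most $O(n^{|j-k|}(\lambda_a\gamma_a)^n)\cdot O(n^{|k|}(\lambda_a\gamma_a)^{-n})=O(n^{|j|})$, which is negligible compared to $O(n^{|j|}(\lambda_a\gamma_a)^n)$ since $|\lambda_a\gamma_a|>1$ by~\eqref{eq1}. Hence the $k=0$ term is dominant and the claim follows. The main bookkeeping issue, and the only real obstacle, is to make sure that every derivative of $h_n$ appearing in the Faà di Bruno expansion has multi-index order strictly smaller than $|\ell|$, so that the inductive hypothesis can be legitimately invoked; this is automatic here because Leibniz distributes the total order $|j|\leq |\ell|-1$ among the factors, so each individual $|j_i|$ is at most $|j|<|\ell|$.
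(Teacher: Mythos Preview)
Your argument is correct and suffices for the application, but it follows a different route from the paper. You expand the product $(\lambda_a\gamma_a)^n\cdot\sin(h_n(a))$ directly via Leibniz and then control $\partial_a^k\sin(h_n(a))$ through Fa\`a di Bruno, feeding in the \emph{outer} inductive hypothesis $\partial_a^{j_i}h_n(a)=O(n^{|j_i|}(\lambda_a\gamma_a)^{-n})$ for $|j_i|<|\ell|$. This is legitimate precisely because, as you note, the claim is only invoked in~\eqref{eq:final} for derivative orders $|(\ell-\iota)-j|\leq |\ell|-1$, so every $|j_i|$ that appears is strictly below~$|\ell|$. One small point: your argument therefore does not establish Claim~\ref{claim2} for all $1\leq|j|\leq d$ at once as stated, but only for $|j|<|\ell|$ at each stage of the outer induction --- this is harmless for the use that is made of it, and once the outer induction is complete your argument immediately upgrades to all $|j|\leq d$.

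The paper instead avoids the outer induction entirely by observing from~\eqref{B*} the Pythagorean identity
\[
\bigl[(\lambda_a\gamma_a)^n\sin(h_n(a))\bigr]^2=(\lambda_a\gamma_a)^{2n}-(B^*_n)^2,
\]
and then using Claim~\ref{claim1} (already proved for all $|j|\leq d$) to kill the $(B^*_n)^2$ contribution. This yields $\partial_a^j\bigl((\lambda_a\gamma_a)^n\sin(h_n(a))\bigr)=O\bigl(\partial_a^j(\lambda_a\gamma_a)^n\bigr)$ directly, after which the bound $\partial_a^j(\lambda_a\gamma_a)^n=O(n^{|j|}(\lambda_a\gamma_a)^n)$ (which you also use) finishes the job. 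The paper's trick is slicker and gives the full range $1\leq|j|\leq d$ in one stroke, decoupled from the induction on~$h_n$; your approach is more pedestrian but entirely transparent and self-contained once the outer inductive hypothesis is granted.
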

\begin{proof}[Proof of Claim~\ref{claim2}]
Observe from~\eqref{B*} that
$[(\lambda_a\gamma_a)^n\sin(h_n(a))]^2=(\lambda_a\gamma_a)^{2n}-(B^*_n)^2$.
Since   $\partial_a^j B^*_n=o(1)$ from Claim~\ref{claim1}, we get
that
$$\partial_a^j
((\lambda_a\gamma_a)^n\sin(h_n(a)))=O(\partial_a^j(\lambda_a\gamma_a)^n).$$
On the other hand it can be seen by an inductive argument that
$\partial_a^j(\lambda_a\gamma_a)^n=O(n^{|j|}(\lambda_a\gamma_a)^n)$,
which gives the required estimate.
\end{proof}

For $j<(\ell-\iota)$, by the induction hypothesis
$\partial_a^{j+\iota} h_n(a)=
O(n^{\left|j+\iota\right|}(\gamma_a\lambda_a)^{-n})$. Combining
this with Claim~\ref{claim2} we obtain that for $j<(\ell-\iota)$
$$\partial_a^{(\ell-\iota)-j}((\lambda_a\gamma_a)^{n}\sin(h_n(a)))\cdot
 \partial_a^{j+\iota} h_n(a)= O(n^{\left|(\ell-\iota)-j\right|}(\lambda_a\gamma_a)^n) \cdot O(n^{\left|j+\iota\right|}(\gamma_a\lambda_a)^{-n})=O(n^{\ell}).$$
On the other hand $\partial_a^{\ell}B^*_n(a) = o(1)$ and $n\cdot
\partial_a^{\ell-\iota}(B^*_n(a)\cdot \partial_a(\log \lambda_a\gamma_a))=O(n)$. Thus, putting all these estimates together in~\eqref{eq:final} and isolating the term corresponding with the index $j=\ell-\iota$ we obtain
$$(\lambda_a\gamma_a)^{n}\sin(h_n(a)))\cdot
 \partial_a^{\ell} h_n(a)=O(n)+O(n^{\left|\ell\right|})+o(1).$$
This implies that $\partial_a^\ell h_n(a)=
O(n^{\left|\ell\right|}(\gamma_a\lambda_a)^{-n})$ concluding the
proof.
\end{proof}

\bibliographystyle{alpha2}
\bibliography{bib}

\def\cprime{$'$}
\begin{thebibliography}{GKM05}

\bibitem[BD12]{BD12}
C.~Bonatti and L.~J. D{\'{\i}}az.
\newblock Abundance of {$C^1$}-homoclinic tangencies.
\newblock {\em Transactions of the American Mathematical Society},
  264:5111--5148, 2012.

\bibitem[Ber16]{Ber16}
P.~Berger.
\newblock Generic family with robustly infinitely many sinks.
\newblock {\em Inventiones mathematicae}, 205(1):121--172, 2016.

\bibitem[Ber17]{Ber17}
P.~Berger.
\newblock Emergence and non-typicality of the finiteness of the attractors in
  many topologies.
\newblock {\em Proceedings of the Steklov Institute of Mathematics},
  297(1):1--27, May 2017.

\bibitem[BR21]{BR20}
P.~G. {Barrientos} and A.~{Raibekas}.
\newblock Robust degenerated unfoldings of cycles and tangencies.
\newblock {\em Journal Dynamics Differential Equations}, 33:177--209, 2021.

\bibitem[Col98]{colli1998infinitely}
E.~Colli.
\newblock Infinitely many coexisting strange attractors.
\newblock {\em Annales de l'Institut Henri Poincare-Nonlinear Analysis},
  15(5):539--580, 1998.

\bibitem[GG00]{GG00}
S.~Gonchenko and V.~Gonchenko.
\newblock On {A}ndronov-{H}opf bifurcations of two-dimensional diffeomorphisms
  with homoclinic tangencies, 2000.

\bibitem[GG04]{GG04}
S.~V. Gonchenko and V.~Gonchenko.
\newblock On bifurcations of birth of closed invariant curves in the case of
  two-dimensional diffeomorphisms with homoclinic tangencies.
\newblock {\em Trudy Matematicheskogo Instituta imeni VA Steklova},
  244:87--114, 2004.

\bibitem[GGT07]{GGT07}
S.~Gonchenko, V.~Gonchenko, and J.~Tatjer.
\newblock Bifurcations of three-dimensional diffeomorphisms with non-simple
  quadratic homoclinic tangencies and generalized {H}{\'e}non maps.
\newblock {\em Regular and Chaotic Dynamics}, 12(3):233--266, 2007.

\bibitem[GKM05]{GKM05}
V.~Gonchenko, Y.~A. Kuznetsov, and H.~G. Meijer.
\newblock Generalized {H}{\'e}non map and bifurcations of homoclinic
  tangencies.
\newblock {\em SIAM Journal on Applied Dynamical Systems}, 4(2):407--436, 2005.

\bibitem[GST08]{GST08}
S.~V. Gonchenko, L.~P. Shilnikov, and D.~V. Turaev.
\newblock On dynamical properties of multidimensional diffeomorphisms from
  {N}ewhouse regions: I.
\newblock {\em Nonlinearity}, 21(5):923, 2008.

\bibitem[GTS93]{GST93a}
S.~Gonchenko, D.~Turaev, and L.~Shilnikov.
\newblock On an existence of newhouse regions near systems with non-rough
  poincar{\'e} homoclinic curve (multidimensional case).
\newblock {\em Doklady Akademii Nauk}, 329(4):404--407, 1993.

\bibitem[HK10]{hunt2010prevalence}
B.~R. Hunt and V.~Y. Kaloshin.
\newblock Prevalence.
\newblock In {\em Handbook of dynamical systems}, volume~3, pages 43--87.
  Elsevier, 2010.

\bibitem[KS17]{KS17}
S.~Kiriki and T.~Soma.
\newblock Takens' last problem and existence of non-trivial wandering domains.
\newblock {\em Advances in Mathematics}, 306:524--588, 2017.

\bibitem[Lea08]{leal2008high}
B.~Leal.
\newblock High dimension diffeomorphisms exhibiting infinitely many strange
  attractors.
\newblock {\em Annales de l'Institut Henri Poincare (C) Non Linear Analysis},
  25(3):587--607, 2008.

\bibitem[New74]{New74}
S.~E. Newhouse.
\newblock Diffeomorphisms with infinitely many sinks.
\newblock {\em Topology}, 13:9--18, 1974.

\bibitem[New79]{New79}
S.~E. Newhouse.
\newblock The abundance of wild hyperbolic sets and nonsmooth stable sets for
  diffeomorphisms.
\newblock {\em Inst. Hautes \'Etudes Sci. Publ. Math.}, 50:101--151, 1979.

\bibitem[PT93]{PT93}
J.~Palis and F.~Takens.
\newblock {\em Hyperbolicity and sensitive chaotic dynamics at homoclinic
  bifurcations}, volume~35 of {\em Cambridge Studies in Advanced Mathematics}.
\newblock Cambridge University Press, Cambridge, 1993.
\newblock Fractal dimensions and infinitely many attractors.

\bibitem[PV94]{PV94}
J.~Palis and M.~Viana.
\newblock High dimension diffeomorphisms displaying infinitely many periodic
  attractors.
\newblock {\em Ann. of Math. (2)}, 140(1):207--250, 1994.

\bibitem[Rob83]{robinson1983bifurcation}
C.~Robinson.
\newblock Bifurcation to infinitely many sinks.
\newblock {\em Communications in Mathematical Physics}, 90(3):433--459, 1983.

\bibitem[Rom95]{Ro95}
N.~Romero.
\newblock Persistence of homoclinic tangencies in higher dimensions.
\newblock {\em Ergodic Theory Dynam. Systems}, 15(4):735--757, 1995.

\bibitem[Tri14]{Triestino}
M.~Triestino.
\newblock G{\'e}n{\'e}ricit{\'e} au sens probabiliste dans les
  diff{\'e}omorphismes du cercle.
\newblock {\em Ensaios Matem{\'a}ticos}, 27:1--98, 2014.

\end{thebibliography}

\end{document}